\documentclass[10pt, oneside, reqno]{article}

\makeatletter

	\PassOptionsToPackage{nameinlink,capitalize}{cleveref}
	\PassOptionsToPackage{noend}{algpseudocode}
	\PassOptionsToPackage{dvipsnames}{xcolor}

	\usepackage{float}
	\usepackage[%
		alglinenumber,
		eqreset=section,
		thmreset=section,
		noload={soul,myNotes},
		arxiv,
	]{myPreamble}
	\renewcommand\@proofsection{\subsection}

	\pgfplotsset{compat=1.18}
	\usepgfplotslibrary{groupplots}
	\usetikzlibrary{calc,patterns}
	
	\let\oldnabla\nabla
	\renewcommand{\nabla}{\@ifstar{\widetilde{\oldnabla}}{\oldnabla}}

	\newcommand{\V}{\mathcal{V}}

	\let\oldstate\State
	\renewcommand{\State}{\normalfont\normalcolor\normalsize\oldstate}
	
	\renewcommand{\theHALG@line}{\thealgorithm.\arabic{ALG@line}}

	\newcommand{\algnamefont}{\sffamily}
	
	\NewDocumentCommand{\adaFRB}{t{+} E{_}{\alpha}}{{
		\algnamefont
		adaFRB$_{#2\vphantom{2}}\IfBooleanT{#1}{^+}$%
	}}%
	\NewDocumentCommand{\AdaFRB}{t{+} E{_}{\alpha}}{{%
		\algnamefont
		adaFRB$_{#2\vphantom{2}}^{\IfBooleanTF{#1}{+}{\phantom{+}}}$%
	}}
	\NewDocumentCommand{\refadaFRB}{t{+} E{_}{\alpha}}{%
		\IfBooleanTF{#1}{%
			\hyperref[alg:adaFRB+]{\algnamefont adaFRB$_{#2\vphantom{2}}^+$}%
		}{%
			\hyperref[alg:adaFRB]{\algnamefont adaFRB$_{#2\vphantom{2}}^{\vphantom{+}}$}%
		}%
	}

	\NewDocumentCommand{\frb}{}{{\algnamefont FRB}}

	\NewDocumentCommand{\FRB}{E{_}{\alpha}}{{%
		\algnamefont
		FRB$_{#1\vphantom{2}}$%
	}}

	\NewDocumentCommand{\newkvar}{s m m t{,}}{%
		\IfBooleanTF{#1}{%
			\NewDocumentCommand{#2}{s t' t' e{^_}}{%
				\IfBooleanTF{##1}{%
					\def\newkvar@sup{k+1}%
				}{%
					\IfBooleanTF{##2}{%
						\IfBooleanTF{##3}{%
							\def\newkvar@sup{k-2}%
						}{%
							\def\newkvar@sup{k-1}%
						}%
					}{%
						\IfValueTF{##4}{%
							\IfBooleanTF{#4}{%
								\def\newkvar@sup{k}%
							}{%
								\def\newkvar@sup{##4}%
							}%
						}{%
							\def\newkvar@sup{k}%
						}%
					}%
				}%
				\def\newkvar@sup@{\newkvar@sup}%
				\IfBooleanT{#4}{\IfValueT{##4}{\def\newkvar@sup@{\newkvar@sup,##4}}}%
				#3^{\newkvar@sup@}%
				\IfValueT{##5}{_{##5}}%
			}%
		}{%
			\NewDocumentCommand{#2}{s t' t' e{_^}}{%
				\IfBooleanTF{##1}{%
					\def\newkvar@sub{k+1}%
				}{%
					\IfBooleanTF{##2}{%
						\IfBooleanTF{##3}{%
							\def\newkvar@sub{k-2}%
						}{%
							\def\newkvar@sub{k-1}%
						}%
					}{%
						\IfValueTF{##4}{%
							\IfBooleanTF{#4}{%
								\def\newkvar@sub{k}%
							}{%
								\def\newkvar@sub{##4}%
							}%
						}{%
							\def\newkvar@sub{k}%
						}%
					}%
				}%
				\def\newkvar@sub@{\newkvar@sub}%
				\IfBooleanT{#4}{\IfValueT{##4}{\def\newkvar@sub@{\newkvar@sub,##4}}}%
				#3_{\newkvar@sub@}%
				\IfValueT{##5}{^{##5}}%
			}%
		}%
	}
	\newcommand{\Gap}{\operatorname{Gap}}
	
	\newcommand{\gammin}{\gamma_{\rm min}}
	\newcommand{\rhomax}{\rho_{\rm max}}

	\newcommand{\hdel}{\widehat{\delta}}
	\newcommand{\tdel}{\tilde{\delta}}
	\newcommand{\teps}{\tilde{\varepsilon}}

	\newkvar{\ak}{A}
	\newkvar{\betk}{\beta}
	\newkvar{\ck}{c}
	\newkvar{\tck}{\tilde{c}}
	\newkvar{\delk}{\delta}
	\newkvar{\hdelk}{\hdel}
	\newkvar{\tdelk}{\tdel}
	\newkvar{\epsk}{\varepsilon}
	\newkvar{\tepsk}{\teps}
	\newkvar*{\DFk}{\Delta_F}
	\newkvar*{\Dxk}{\Delta_x}
	\newkvar{\gamk}{\gamma}
	\newkvar{\lamk}{\lambda}
	\newkvar{\lk}{\ell}
	\newkvar{\Lk}{L}
	\newkvar{\tLk}{\tilde L}
	\newkvar{\rhok}{\rho}
	\newkvar{\hrhok}{\widehat\rho}
	\newkvar{\brhok}{\bar{\rho}}
	\newkvar{\sigk}{\sigma},
	\newkvar{\tauk}{\tau},
	\newkvar{\thetk}{\vartheta}
	\newkvar{\tk}{t}
	\newkvar*{\uk}{u}
	\newkvar*{\wk}{w}
	\newkvar*{\xk}{x}
	\newkvar{\xik}{\xi}
	\newkvar{\bxik}{\bar\xi}
	\newkvar{\Uk}{\mathcal U^\alpha}
	\newkvar{\Vk}{V}

	\def\Dg(#1,#2){\operatorname{\tilde D}_g(#1,#2)}

	\renewcommand{\innprod}{\@ifstar\@innprod\@@innprod}
	\let\oldvert\|
	\renewcommand{\|}{{\red\Bigg\oldvert}}
	\DeclarePairedDelimiter\norm{\lVert}{\rVert}
	\DeclarePairedDelimiter\abs{\lvert}{\rvert}

\usepgfplotslibrary{fillbetween}
\tikzset{
    external/aux in dpth=true 
}

\makeatother

\newcommand{\EG}{{\algnamefont EG}}
\newcommand{\EAG}{{\algnamefont EAG}}
\newcommand{\FBF}{{\algnamefont FBF}}
\newcommand{\GRAAL}{{\algnamefont GRAAL}}
\newcommand{\AGRAAL}{{\algnamefont aGRAAL}}

	\definecolor{myred}{RGB}{220,20,60}        
	\definecolor{mypurple}{RGB}{148,0,211}     
	\definecolor{myblue}{RGB}{0,114,189}       
	\definecolor{myorange}{RGB}{230,159,0}     
	\definecolor{mygreen}{RGB}{0,158,115}      
	\definecolor{mybrown}{RGB}{140,86,75}      
	\definecolor{myteal}{RGB}{0,176,240}       
	\definecolor{mygray}{RGB}{110,110,110}     

	\def\phaseI{5}
	\def\phaseII{10}
	\def\phaseIII{15}
	\def\phaseIV{20}
	\pgfplotsset{
		with marker/.style = {
			mark = #1,
			mark size = 2pt,
			mark repeat = 25,
		},
		myline/.style = {
			line width = 0.75pt,
		},
		constant/.style = {
			myline,
			densely dashed,
			opacity = 0.5,
		},
		adaptive/.style = {
			solid,
			myline,
			with marker = o,
			mark size = 1.5pt,
		},
		adaptiveT/.style = {
			solid,
			myline,
			with marker = diamond,
		},
		FRB1/.style = {
			constant,
			color = black,
		},
		AdaFRB1W/.style = {
			adaptive,
			color = black!50,
		},
		AdaFRB1T/.style = {
			adaptiveT,
			color = black,
			mark phase = \phaseI,
		},
		FRB2/.style = {
			constant,
			color = MidnightBlue,
		},
		AdaFRB2W/.style = {
			adaptive,
			color = MidnightBlue!50,
			mark phase = \phaseII,
		},
		AdaFRB2T/.style = {
			adaptiveT,
			color = MidnightBlue,
			mark phase = \phaseIII,
		},
		GRAAL/.style = {
			constant,
			Red,
		},
		AGRAAL/.style = {
			adaptive,
			mark phase = \phaseIV,
			Red,
		},
		EG/.style={
			constant,
			color = mypurple,
		},
		EAG/.style={
			constant,
			color = Orange,
		},
		FBF/.style={
			constant,
			color = Brown,
		},
		LFbaseline/.style = {
			solid,
			color = gray!70,
			line width = 0.7pt,
			no marks,
			forget plot,
		},
		right*/.style = {
			ylabel = {},
		},
		right/.style = {
			right*,
			yticklabels = \empty,
		},
		myaxis/.style = {
			grid = both,
			minor grid style = {
				line width = 0.4pt,
				color = gray!20,
			},
			major grid style = {
				line width = 0.4pt,
				color = gray!20,
			},
			title style = {font = \small},
			label style = {font = \footnotesize},
			tick label style = {font = \scriptsize},
			every x tick scale label/.append style = {
				yshift = 0.75em,
				font = \tiny,
			},
			legend style = {
				at = {(1, 1.25)},
				anchor = south,
				draw = black,
				rounded corners = 2pt,
				fill = white,
				fill opacity = 0.5,
				text opacity = 1,
				font = \scriptsize,
				column sep = 0.6em,
			},
			legend cell align = {left},
			table/x index=0,
			table/y index=1,
		},
	}

\title{%
	An Adaptive Linesearch-free Method for Monotone Variational Inequalities under Local Lipschitz Continuity\thanks{%
		A. Themelis was supported by the JSPS KAKENHI grant number JP24K20737.
		P. Latafat is a member of the Gruppo Nazionale per l'Analisi Matematica, la Probabilit\`a e le loro Applicazioni (GNAMPA - National Group for Mathematical Analysis, Probability and their Applications) of the Istituto Nazionale di Alta Matematica (INdAM - National Institute of Higher Mathematics).
	}%
}
\author{%
	Hongjia Ou\thanks{%
		Faculty of Information Science and Electrical Engineering (ISEE),
		Kyushu University, 744 Motooka, Nishi-ku 819-0395, Fukuoka, Japan.
		{\sf ou.honjia.069@s.kyushu-u.ac.jp, andreas.themelis@ees.kyushu-u.ac.jp}%
	}%
\and
	Andreas Themelis\footnotemark[2]
\and
	Puya Latafat\thanks{%
		IMT School for Advanced Studies Lucca, Piazza S. Francesco 19, 55100 Lucca, Italy.
		{\sf puya.latafat@imtlucca.it}%
	}%
}
\date{}

\renewcommand{\includetikz}[2][]{\includegraphics[#1]{Pics/Tikz/#2.pdf}}

\begin{document}

	\maketitle
	
	\begin{abstract}
		The forward-reflected-backward (FRB) splitting solves inclusion problems involving the sum of a maximally monotone operator and a monotone Lipschitz continuous operator.
		Each iteration performs one resolvent step and one evaluation of the Lipschitz operator, plus a reflection term with coefficient one that reuses the previous operator value.
		We consider the setting where the maximally monotone operator is the subdifferential of a proper, lower semicontinuous, convex function.
		We identify the tight admissible range of constant reflection coefficients, namely all values larger than one half for a sufficiently small stepsize.
		We then propose a linesearch-free adaptive variant of FRB for locally Lipschitz continuous operators, in which both the stepsize and the reflection coefficient change across iterations.
		The stepsize is computed in closed form from a simple local Lipschitz estimate, without requiring the existence or knowledge of a global Lipschitz constant, while maintaining the iteration cost of FRB.
		The analysis of both methods relies on a new Lyapunov function that combines the distance to a solution, successive differences of iterates, and a gap-type term.
		The individual terms need not decrease along the iterates, but the stepsize conditions ensure that their weighted combination does.
		Moreover, alongside nonasymptotic rates, we derive explicit lower bounds on the generated stepsizes, and showcase the effectiveness of the adaptive method through numerical experiments on minimax problems, equilibrium models, and regularized regression.
	\end{abstract}

	\begin{keywords}
		Monotone inclusions, variational inequalities, adaptive stepsizes, forward-reflected-backward splitting, local Lipschitz continuity
	\end{keywords}	
	
	\section{Introduction}

		This work addresses the monotone inclusion problem
		\begin{equation}\label{eq:P}
			0 \in Tx \coloneqq Fx+\partial g(x),
		\end{equation}
		where
		\begin{enumeratass}
		\item \label{ass:f}%
			\(\func{F}{\R^n}{\R^n}\) is a monotone and \emph{locally} Lipschitz-continuous mapping,
		\item \label{ass:g}%
			$\partial g$ is the subdifferential of a proper convex lsc function $g:\R^n\to\Rinf$, and
		\item \label{ass:sol}%
			$\mathcal{X}^\star \coloneqq \zer T\neq\emptyset$.
		\end{enumeratass}
		Inclusion problem \eqref{eq:P} is equivalently stated as the problem of finding $x^\star\in\dom g$ such that
		\begin{equation}\label{eq:VI}
			g(x) - g(x^\star) + \innprod{F(x^\star)}{x - x^\star} \geq 0
			\qquad\forall x\in \dom g,
		\end{equation}
		and is also referred to as a hemivariational inequality (HVI) or mixed variational inequality.
		This formulation encompasses convex minimization, saddle-point problems, complementarity problems, and Nash equilibrium models arising in game theory and economics \cite{facchinei2003finite}.
		
		Among the most widely studied methods for solving \eqref{eq:P} are Korpelevich's extragradient method \cite{korpelevich1976extragradient} and Tseng's forward-backward-forward (FBF) method \cite{tseng2000modified}, both handling monotone and Lipschitz continuous $F$ and composite structure, at the cost of two evaluations of $F$ per iteration. The extragradient additionally requires two proximal evaluations per step, while Tseng's method requires only one.
		Many other methods following similar extrapolation and reflection principles exist, see \cite{bot2025fast,trandinh2026revisiting} for an overview.
		Of particular relevance to this work is Popov's algorithm \cite{popov1980modification}, rediscovered in the machine learning literature as the \emph{optimistic gradient descent ascent} (OGDA) method, which reduces the per-step cost to one new evaluation of $F$ by reusing the previous operator value, at the cost of halving the admissible stepsize to $\gamma < \nicefrac{1}{2L}$ where \(L\) is the global modulus of Lipschitz continuity of \(F\).
		The \emph{forward-reflected-backward} (FRB) method of \cite{malitsky2020forward} extends Popov's approach to the composite setting for a fixed $\gamma \in (0,\nicefrac{1}{2L})$,
		\begin{equation}\label{eq:FRB}
			\xk* = \prox_{\gamma g}\bigl(\xk - 2\gamma F(\xk) + \gamma F(\xk')\bigr).
		\end{equation}
		The above update can be interpreted as a forward-backward step with a correction term $F(\xk)-F(\xk')$ that compensates for the possible lack of cocoercivity of $F$.
		
		Building upon this class of updates, the present work studies the iteration family
		\begin{subequations}\label{eq:alg-family}
			\begin{align}
				\uk*
			={} &
				F(\xk)
				+
				\thetk*\bigl(F(\xk)-F(\xk')\bigr)
			\\
			\label{eq:xk*}
				\xk*
			={} &
				\prox_{\gamk* g}\bigl(
					\xk
					-
					\gamk*\uk*
				\bigr),
			\end{align}
		\end{subequations}
		where $\gamk*>0$ and $\thetk*\geq0$ are chosen explicitly based on past iterates, without the existence or knowledge of a global Lipschitz constant $L$.
		
		In the constant stepsize regime \(\gamk* \equiv \gamma\), the choice \(\thetk* \equiv 1\) reduces to \eqref{eq:FRB}, while \(\thetk* \equiv 0\) reduces to the forward-backward splitting, which is not guaranteed to converge when \(F\) lacks cocoercivity.
		The convergence of \eqref{eq:FRB} relies on showing that along the iterates, the quantity
		\begin{equation}\label{eq:PhikMT}
			\norm{x^k-x^\star}^2
			+
			2\gamma\innprod{F(x^k)-F(x^{k-1})}{x^\star-x^k}
			+
			\tfrac{1}{2}\norm{x^k-x^{k-1}}^2
		\end{equation}
		is nonincreasing for any solution \(x^\star\) of \eqref{eq:P} \cite[Lem. 2.4]{malitsky2020forward}.
		The middle inner product can be negative, and controlling it relies on a delicate balance specific to the fixed choice \(\thetk*\equiv1\), one that does not obviously extend to other reflection coefficients or to adaptive stepsizes.
		We depart from this analysis altogether.
		In the constant stepsize regime, with \(\thetk*\equiv\alpha\in(\nicefrac12,\infty)\), our Lyapunov function is
		\begin{equation}\label{eq:Ukconstant}
			U_k^\alpha(x^\star)
		\coloneqq
			\tfrac12\norm{x^k-x^\star}^2
			+
			\tfrac12\norm{x^k-x^{k-1}}^2
			+
			\tfrac{\lambda\gamma^2}{2}\norm{F(x^{k-1})-F(x^{k-2})}^2
			+
			(1+\alpha)\gamma V_{k-1}(x^\star),
		\end{equation}
		for a parameter \(\lambda>0\) fixed as part of the construction, where \(\Vk\) is the pointwise Stampacchia gap function, defined in \eqref{eq:Vk} below and sign-definite by monotonicity.
		In the setting \(\alpha=1\), corresponding to FRB \eqref{eq:FRB}, \(\Vk\geq0\) plays the role of the possibly negative inner product in \eqref{eq:PhikMT}, up to an extra term tracking the previous operator difference.

		The admissible range of \(\thetk*\equiv\alpha\) turns out to extend well beyond the single value \(\alpha=1\) of \cite{malitsky2020forward}.
		We show that \(\alpha>\tfrac12\) already suffices for convergence, with an explicit closed-form threshold on \(\gamma L\) for every \(\alpha\in(\tfrac12,\infty)\) (\cref{thm:constant-stepsize}), and that \(\tfrac12\) cannot be improved, exhibited by a linear, skew-symmetric instance of \(F\) on which the iteration diverges for every \(\alpha\leq\tfrac12\) and every stepsize (\cref{ex:sharpness}).
		
		We also remark that several works generalize \eqref{eq:FRB} along other axes.
		One line of work \cite{cevher2021reflected} evaluates \(F\) once at the extrapolated point \(2\xk-\xk'\) instead of combining two operator values.
		Another \cite{wang2024mirror} allows a possibly negative inertial parameter in a Bregman extension of \eqref{eq:FRB} for composite minimization under relative smoothness.
		A third \cite{soe2025generalized} adds a point-inertial term and a third operator value \(F(x^{k-2})\), recovering \eqref{eq:FRB} as a special case.
		None of these rescale the two coefficients of \eqref{eq:FRB} itself, the axis \(\thetk*\equiv\alpha \neq 1\) as discussed above.
		
		We now turn to the adaptive algorithm where we set \(\thetk*=\alpha\rhok*\), with \(\rhok*\coloneqq\tfrac{\gamk*}{\gamk}\) in \eqref{eq:alg-family}. 
		Our simplified scheme \refadaFRB{} (\cref{alg:adaFRB}) fixes a reflection coefficient \(\alpha\in[1,2]\).
		At \(\alpha=2\), for instance, it updates the stepsize by
		\begin{equation}\label{eq:alg-simplified}
			\gamk*
		=
			\min\Bigl\{
				\gamk\sqrt{\tfrac12+\rhok},\;
				\tfrac1{5\Lk}
			\Bigr\},
		\qquad
			\rhok*=\tfrac{\gamk*}{\gamk},
		\end{equation}
		where
		\[
			\Lk \coloneqq \tfrac{\norm{F(\xk)-F(\xk')}}{\norm{\xk-\xk'}}
		\]
		is a local estimate of the Lipschitz modulus of \(F\).
		It is worth noting that even when \(F\) is globally Lipschitz continuous, \(\Lk\) is often a much smaller quantity than the global Lipschitz modulus, thus leading to larger stepsizes associated with faster convergence.
		The rule for general \(\alpha\in[1,2]\) follows the same pattern and is given in full in \cref{alg:adaFRB}.
		This interval sits well inside the range \(\alpha>\tfrac12\) already shown to suffice at constant stepsize.
		At \(\alpha=1\) it recovers the same \(\sqrt{1+\rhok}\) growth pattern used for adaptive gradient descent in the minimization setting, discussed below, while at \(\alpha=2\) one of the two inner products entering the descent inequality behind \eqref{eq:alg-simplified} vanishes identically, and the resulting coefficient \(c(\alpha)\) of the safety bound \(\frac{c(\alpha)}{\Lk}\) is largest.
		
		Regardless of the value of \(\alpha\), the first term above allows the stepsize to recover after encountering a region with large \(\Lk\), e.g., due to steep or ill-conditioned regions, and is guaranteed to do so within at most two such steps.\footnote{%
			This owes to the fact that \(\sqrt{\frac{1}{2}+\sqrt{\frac{1}{2}+\rhok'}}>1\) for any \(\rhok'>0\).
		}
		
		A second, sharper scheme, \refadaFRB+, refines the stepsize further.
		One of the two inner products entering the descent inequality behind \eqref{eq:alg-simplified}, \(\innprod{x^k-x^{k-1}}{F(x^{k-1})-F(x^{k-2})}\), depends only on quantities already available at iteration \(k\).
		Rather than bounding it in advance by a worst-case constant, \refadaFRB+ evaluates it exactly and folds a tighter, lossless Young-type \emph{equality} into the stepsize itself, at the cost of a slightly more involved update (see \cref{thm:Young}).
		
		A natural benchmark for this stepsize rule is the minimization case, where \(F=\nabla f\) for a convex function \(f\) with locally Lipschitz gradient.
		Linesearch-free adaptivity there was pioneered by Malitsky and Mishchenko \cite{malitsky2020adaptive}, who proposed the rule
		\begin{equation}\label{eq:adaGD}
			\gamk*
		=
			\min\Bigl\{
				\gamk\sqrt{1+\rhok},\;
				\tfrac{1}{2\Lk}
			\Bigr\},
		\quad
			\xk* = \xk - \gamk*\nabla f(\xk),
		\end{equation}
		based solely on local Lipschitz estimates \(\Lk\) from consecutive iterates.
		This was extended to the proximal setting in \cite{latafat2025adaptive}, and to variants with a general growth condition \(\gamk*\leq\gamk\sqrt{1/\alpha+\rhok}\) for \(\alpha>0\) in \cite{malitsky2024proximal,latafat2024convergence,oikonomidis2024universal}.
		Further extensions include acceleration \cite{li2025simple}, stochastic \cite{aujol2025stochastic} and nonconvex settings \cite{yagishita2025linesearch}, Bregman geometries \cite{ou2025linesearch}, and primal-dual splitting schemes \cite{latafat2025adaptive,jang2026alia}.
		
		The coefficient of \(\frac{1}{\Lk}\) in our safety bound is strictly smaller than in this minimization counterpart, for the same value of \(\alpha\in[1,2]\).
		This gap is already present at fixed stepsize, where the FRB condition \(\gamma<\nicefrac1{2L}\) is one quarter of the proximal gradient bound \(\gamma<\nicefrac{2}{L}\).
		Our simplified rule gives \(\tfrac{c}{\Lk}=\tfrac{1}{(7-\alpha)\Lk}\) (equal to \(\tfrac1{5\Lk}\) at \(\alpha=2\)) against \eqref{eq:adaGD}'s \(\tfrac1{2\Lk}\).
		This is not merely a looser constant.
		The root cause is the absence of a cost function.
		In the Lyapunov analysis of \eqref{eq:adaGD} and its extensions, a key nonnegative term is \(f(\xk) - f(x^\star)\) (or \((f+g)(\xk)-(f+g)(x^\star)\) in the composite case), which allows the control of certain inner products using subgradient inequalities.
		No such quantity exists for general monotone \(F\).
		Instead our analysis shows that its role can be taken by the pointwise Stampacchia gap function
		\begin{align*}
			V_k(x^\star)
			& {} \coloneqq
			g(\xk) - g(x^\star) - \innprod{F(\xk)}{x^\star - \xk}\numberthis\label{eq:Vk}\\
			& {} \geq
			g(\xk) - g(x^\star) + \innprod{F(x^\star)}{x^k - x^\star},
		\end{align*}
		which by monotonicity of \(F\) and \eqref{eq:VI} satisfies \(V_k(x^\star)\geq0\) for every \(x^\star\in\zer T\).
		This gap function also yields an explicit nonasymptotic rate, with the best-iterate value \(\min_{k\leq K}V_k(x^\star)\) decreasing at a rate \(O\bigl(1/\sum_{k=1}^K\gamk\bigr)\). Hence \(O(1/K)\), once the stepsizes are bounded away from zero.
		Indeed, with \(L\) denoting a Lipschitz constant for \(F\) on a compact set containing the iterates, we show that \(\gamk\geq\min\set{\gamma_0,\nicefrac{c}{\sqrt\alpha L}}\) holds for every \(k\), and, more importantly for the rate above, that the average stepsize satisfies the sharper bound \(\tfrac1K\sum_{k=1}^K\gamk\geq\min\set{\gamma_0,\nicefrac{c}{L}}\), shaving off the factor of \(\sqrt\alpha\).
		
		Moreover, since \(\Vk\) need not be convex in its second argument, it cannot directly certify optimality the way a function value does in the minimization case.
		We remedy this with a restricted dual gap function, convex and real-valued, vanishing only at solutions, for which we establish a genuine nonasymptotic ergodic convergence guarantee.

		For variational inequalities and monotone inclusions, linesearch-free adaptivity is less developed.
		The GRAAL and aGRAAL methods \cite{malitsky2020golden,alacaoglu2023beyond} are the closest prior work in this setting.
		Like \eqref{eq:alg-family}, each iteration requires a single evaluation of $F$ and one proximal step, but the structure differs as the iterate is built around an auxiliary averaged point $\bar x^k$,
		\begin{subequations}
			\begin{align}\label{eq:aGRAAL}
			\bar x^k
			= {}&
			\tfrac{\phi-1}{\phi}\xk + \tfrac{1}{\phi}\bar x^{k-1},
			\\
			\xk*
			={}&
			\prox_{\gamk* g}\bigl(\bar x^k - \gamk* F(\xk)\bigr),
		\end{align}
		\end{subequations}
		with parameter $\phi \in \bigl(1,\tfrac{1+\sqrt{5}}{2}\bigr]$ and adaptive stepsize
		\begin{equation}\label{eq:aGRAAL-step}
			\gamk*
		=
			\min\Bigl\{
				\mu\gamk,\;
				\tfrac{\phi^2}{4\gamk' \Lk^2},\;
				\gamma_{\rm max}
			\Bigr\},
		\end{equation}
		where $\mu \leq \tfrac{1}{\phi}+\tfrac{1}{\phi^2}$ and $\gamma_{\rm max}>0$ is a prescribed upper bound. The analysis was later refined in \cite{alacaoglu2023beyond} where it was shown that $\gamma_{\rm max}$ can be dropped in the special case $g = \delta_C$.
		Note that both the stepsize update and the analysis are fundamentally different from ours.
		Firstly, the bound $\frac{\phi^2}{4\gamk'\Lk^2}$ couples the previous stepsize to the square of $\Lk$, contrasting with our curvature term \(\tfrac{c}{\Lk}\) (e.g.\ \(\nicefrac{1}{5\Lk}\) at \(\alpha=2\)), which is linear in $1/\Lk$ and involves no past stepsize.
		Secondly, the Lyapunov analysis relies on a weighted energy in $\bar x^k$ and hinges on the identity $\phi-1-\frac{1}{\phi}=0$, satisfied only at the golden ratio.
		We compare against the adaptive variant aGRAAL in our simulations.
		
		Accelerated rates for \eqref{eq:P} have also been obtained via Halpern-type anchoring \cite{yoon2021accelerated} or vanishing-damping dynamics \cite{bot2025fast}, both of which assume a known global Lipschitz constant \(L\).
		Our focus here is complementary, targeting adaptivity to the local Lipschitz geometry of \(F\) without any such knowledge.

	\section{Proposed algorithms}

		\subsection{The globally Lipschitz case: A generalized FRB splitting}\label{sec:alg_constant}

			We begin with a linesearch-free method corresponding to the family of algorithms \eqref{eq:alg-family} with constant stepsizes \(\gamk\equiv\gamma\) and parameters \(\thetk\equiv\alpha\), under the assumption that \(F\) is globally Lipschitz continuous with known modulus \(L_F\).
			The full generality of mere local Lipschitzianity (or agnosticism to Lipschitz moduli) will be addressed in the next subsection.
			With these simplifications, the iterates have the form
			\[\tag{\FRB}\label{eq:FRBa}
				\xk*
			=
				\prox_{\gamma g}\Bigl(
					\xk
					-
					\gamma\Bigl[
						(1+\alpha)F(\xk)
						-
						\alpha F(\xk')
					\Bigr]
				\Bigr),
			\]
			for some stepsize \(\gamma\) and some parameter \(\alpha\).
			When \(\alpha=0\), one recovers the forward-backward splitting, whose convergence is known to potentially fail when \(F\) is not cocoercive.
			When \(\alpha=1\), one instead obtains the forward-reflected-backward splitting which was proven to converge for any stepsize \(\gamma\in(0,\nicefrac{1}{2L_F})\).
			
			In this work we show that any \(\alpha>\nicefrac{1}{2}\) works, provided that the stepsize satisfies
			\begin{equation}\label{eq:FRBa_gamma}
				\gamma L_F
			<
				c(\alpha)
			\coloneqq
				\frac{2\alpha-1}{\alpha\abs{2-\alpha}+\sqrt{\alpha^2(2-\alpha)^2+2(2\alpha-1)(2\alpha+1)^2}}.
			\end{equation}
			Here, note that for \(\alpha=1\) one has \(c(1)=\tfrac{\sqrt{19}-1}{18}\approx0.187\), leading to a bound on the stepsize more conservative than \(\gamma L_F<\nicefrac{1}{2}\) of \cite{malitsky2020forward}.
			This conservatism is a natural consequence of our more general analysis, which is designed to accommodate the absence of global Lipschitz continuity and recovers the constant-stepsize setting through a worst-case specialization.
			A similar phenomenon occurs, for instance, for the case of the (proximal) gradient method, that is, \eqref{eq:P} with \(F=\nabla f\): the adaptive analyses of \cite{malitsky2024proximal,latafat2024convergence} when specialized to a constant stepsize regime yield a bound \(\gamma L_F<\nicefrac{\sqrt{2}}{2}\), whereas the classical constant-stepsize analysis allows the tight upper bound \(\gamma L_F<2\).
			
			The value of \(c(\alpha)\) peaks exactly at \(\alpha=2\), while \(c(\alpha)\sim\nicefrac{1}{\alpha}\) as \(\alpha\to\infty\).
			Note that this quantity is well defined and positive precisely when \(\alpha>\nicefrac{1}{2}\).
			Interestingly, this restriction is not an artifact of our analysis: this range of admissible \(\alpha\) is \emph{tight}, as the next counterexample shows.
			
			\begin{example}[sharpness of \(\alpha>1/2\)]\label{ex:sharpness}%
				Let \(g\equiv0\) and \(F(x)=Sx\) with \(S=\begin{pmatrix}0&1\\-1&0\end{pmatrix}\).
				Then, \(F\) is monotone, \(1\)-Lipschitz, with unique solution \(x^\star=0\).
				The constant-stepsize iteration is then linear, and spectral analysis of the resulting companion-form recursion shows that its eigenvalues lie strictly inside the unit disk, and hence \(\xk\to x^\star\) if and only if \(\alpha>\tfrac12\) and \(\gamma^2<\frac{2\alpha-1}{\alpha^2(1+2\alpha)}\).
				Hence, for \(\alpha\leq\tfrac12\) the iteration diverges for every \(\gamma>0\).
				For \(\alpha>\tfrac12\) stability persists however large \(\alpha\) is taken, with the admissible \(\gamma\) shrinking like \(1/\alpha\), consistent with \(c(\alpha)\sim1/\alpha\) above.
				All the details are expanded in the dedicated \cref{proof:ex:sharpness}.
			\end{example}
			
			\noindent
			\null\hfill
			\fbox{\centering
				\parbox{0.97\linewidth}{%
					\begin{theorem}[convergence at constant stepsize]\label{thm:constant-stepsize}%
						Suppose that \(F\) is globally Lipschitz continuous with modulus \(L_F\), and let \(\seq{\xk}\) be generated by the generalized FRB iterations \eqref{eq:FRBa} with \(\alpha>\nicefrac{1}{2}\) and \(\gamma<\nicefrac{c(\alpha)}{L_F}\) as in \eqref{eq:FRBa_gamma}.
						Then, \(\seq{\xk}\) converges to a solution of \eqref{eq:P}.
						Moreover, with \(\Uk\) as in \eqref{eq:Ukconstant}, for any solution \(x^\star\) of \eqref{eq:P} it holds that
						\begin{align*}
							\Uk*(x^\star)
						\leq
							\Uk(x^\star)
						&
							-
							\gamma
							\Vk'(x^\star)
							-
							C
							\bigl(
								c(\alpha)^2
								-
								(\gamma L_F)^2
							\bigr)
							\norm{\xk-\xk'}^2
						\quad
							\forall k\geq 1,
						\end{align*}
						where
						\(
							C
						\coloneqq
							(1+2\alpha)^2
							+
							\tfrac{\alpha\abs{2-\alpha}}{2c(\alpha)}
						>
							0
						\).
					\end{theorem}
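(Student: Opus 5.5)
The plan is to derive a one-step descent inequality directly from the prox characterization of \eqref{eq:FRBa}, then pick the free parameter \(\lambda\) in \eqref{eq:Ukconstant} and a Young's inequality weight so that all remaining cross terms are dominated by \(\norm{\xk-\xk'}^2\). Convergence of the iterates then follows by a standard Fejér/Opial-type argument.

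\textbf{Step 1: three-point inequality.} Write \(\uk* = (1+\alpha)F(\xk)-\alpha F(\xk')\). The prox step gives \(\tfrac{1}{\gamma}(\xk-\xk*)-\uk*\in\partial g(\xk*)\). Applying the subgradient inequality at \(x^\star\) yields
\[
	\innprod{\xk-\xk*}{\xk*-x^\star}
\geq
	\gamma\bigl(g(\xk*)-g(x^\star)\bigr)
	+
	\gamma\innprod{\uk*}{\xk*-x^\star}.
\]
The cosine rule turns the left-hand side into \(\tfrac12\norm{\xk-x^\star}^2-\tfrac12\norm{\xk*-x^\star}^2-\tfrac12\norm{\xk*-\xk}^2\).

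\textbf{Step 2: expose the gap terms.} Split
\[
	\innprod{\uk*}{\xk*-x^\star}
=
	(1+\alpha)\innprod{F(\xk)}{\xk-x^\star}
	-
	\alpha\innprod{F(\xk')}{\xk'-x^\star}
	+\text{(residuals)}.
\]
The residuals are \((1+\alpha)\innprod{F(\xk)}{\xk*-\xk}\) and \(-\alpha\innprod{F(\xk')}{\xk*-\xk'}\). Combining with the values of \(g\) produces \((1+\alpha)\gamma V_k-\alpha\gamma V_{k-1}\). This requires bounding \(g(\xk)\) and \(g(\xk')\) through the subgradient inequality of the previous prox step, with \(\tfrac1\gamma(\xk'-\xk)-\uk\in\partial g(\xk)\) tested at \(\xk*\) and at \(\xk'\).

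Telescoping \((1+\alpha)\gamma V_k-\alpha\gamma V_{k-1}=(1+\alpha)\gamma(V_k - V_{k-1})+\gamma V_{k-1}\) gives the \(-\gamma V_{k-1}\) term of the statement. After simplification the leftover cross terms should be, up to constants:
\begin{itemize}
	\item \(\gamma\innprod{\xk*-\xk}{F(\xk)-F(\xk')}\), with coefficient involving \(1+2\alpha\);
	\item \(\gamma\innprod{\xk-\xk'}{F(\xk')-F(\xk'')}\), with coefficient proportional to \(\alpha(2-\alpha)\).
\end{itemize}
The second one is the inner product the introduction says vanishes at \(\alpha=2\), which explains the \(\abs{2-\alpha}\) in \(c(\alpha)\). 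It also accounts for the \(\norm{F(\xk')-F(\xk'')}^2\) term in \(\Uk\).

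\textbf{Step 3: Young and Lipschitz.} Bound the first cross term by \(\tfrac12\norm{\xk*-\xk}^2+\tfrac{(1+2\alpha)^2\gamma^2}{2}\norm{F(\xk)-F(\xk')}^2\) (or with a weight). The \(\tfrac12\norm{\xk*-\xk}^2\) part absorbs the negative term from Step 1, which is why \(\tfrac12\norm{\xk-\xk'}^2\) appears in \(\Uk\). Bound the second cross term by \(\tfrac{\alpha\abs{2-\alpha}}{2}\bigl(\tfrac{\gamma}{\varepsilon}\norm{\xk-\xk'}^2+\varepsilon\gamma\norm{F(\xk')-F(\xk'')}^2\bigr)\). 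Choose \(\lambda\) so that the \(\norm{F(\xk')-F(\xk'')}^2\) contributions telescope into the \(\tfrac{\lambda\gamma^2}{2}\) term of \(\Uk\). Finally apply \(\norm{F(\xk)-F(\xk')}\leq L_F\norm{\xk-\xk'}\).

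What remains is a coefficient of \(\norm{\xk-\xk'}^2\) of the form \(-\bigl(a-b\gamma L_F-d(\gamma L_F)^2\bigr)\). Its positive root is exactly \(c(\alpha)\), as the quadratic formula shape of \eqref{eq:FRBa_gamma} indicates: the discriminant is \(\alpha^2(2-\alpha)^2+2(2\alpha-1)(2\alpha+1)^2\). Factoring \(a-b t-d t^2\geq C(c^2-t^2)\) for \(t\in[0,c]\) follows from concavity, using \(b t\leq bc\cdot t/c\leq \tfrac{b}{c}\cdot\tfrac{c^2+t^2}{2}\), which matches the stated \(C=(1+2\alpha)^2+\tfrac{\alpha\abs{2-\alpha}}{2c}\). \textbf{This bookkeeping is the main obstacle:} choosing \(\lambda\) and \(\varepsilon\) (likely \(\varepsilon\propto 1/L_F\) tied to \(c\)) so that constants match and \(2\alpha-1>0\) emerges as the leading coefficient.

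\textbf{Step 4: convergence.} Since \(\Uk\geq \tfrac12\norm{\xk-x^\star}^2\) and \(V\geq0\), summing the inequality gives:
\begin{itemize}
	\item \(\norm{\xk-\xk'}\to0\);
	\item \(\seq{\xk}\) bounded;
	\item \(\Uk(x^\star)\) convergent for every \(x^\star\).
\end{itemize}
The remaining terms of \(\Uk\) vanish, with \(V_{k-1}\to0\) from summability. Hence \(\norm{\xk-x^\star}\) converges for all \(x^\star\in\zer T\). Cluster points are solutions because \(\tfrac1\gamma(\xk-\xk*)-\uk*+F(\xk*)\in T\xk*\) tends to zero and \(\graph T\) is closed. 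Opial's lemma then gives convergence of the whole sequence.
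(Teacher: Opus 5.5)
There is a genuine gap in Steps 2--3. It is a missing ingredient, not unfinished constant-chasing. The three subgradient inequalities you use are: at \(x^{k+1}\) tested at \(x^\star\), at \(x^k\) tested at \(x^{k+1}\), and \(\alpha\) times at \(x^k\) tested at \(x^{k-1}\). They combine exactly into
\begin{align*}
	&\tfrac12\norm{x^{k+1}-x^\star}^2+\tfrac12\norm{x^{k+1}-x^k}^2+(1+\alpha)\gamma V_k(x^\star)
\\
	\leq{}&
	\tfrac12\norm{x^k-x^\star}^2+\alpha\gamma V_{k-1}(x^\star)-\alpha\norm{\Delta_x^k}^2
	-\langle \Delta_x^{k+1},w^{k+1}\rangle-\alpha^2\gamma\langle\Delta_x^k,\Delta_F^{k-1}\rangle,
\end{align*}
with \(w^{k+1}=-\Delta_x^k+(1+\alpha)\gamma\Delta_F^k-\alpha\gamma\Delta_F^{k-1}\), in the notation of \eqref{eq:Dk}.

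So the leftover cross terms are not the two you list. The \(\Delta_x^{k+1}\)-term also contains \(\langle\Delta_x^{k+1},\Delta_x^k\rangle\) and \(\alpha\gamma\langle\Delta_x^{k+1},\Delta_F^{k-1}\rangle\), and its \(\Delta_F^k\)-coefficient is \(-(1+\alpha)\gamma\). The coefficient of \(\langle\Delta_x^k,\Delta_F^{k-1}\rangle\) is \(-\alpha^2\gamma\), not a multiple of \(\alpha(2-\alpha)\).

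More importantly, your Step 3 spends \(\tfrac12\norm{x^{k+1}-x^k}^2\) twice. You use it once to absorb Young's bound on the \(\Delta_x^{k+1}\) cross term, and once as the \(\tfrac12\norm{x^{k+1}-x^k}^2\) component of \(U^\alpha_{k+1}\), which must survive on the left. Any Young weight consumes part of it. Your plan contains no other inequality controlling \(\Delta_x^{k+1}\) by step-\(k\) quantities, so the descent inequality for the stated \(U^\alpha_k\) cannot be closed this way.

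The missing idea is firm nonexpansiveness of the proximal map between consecutive iterates (\cref{thm:FNE}), equivalently monotonicity of \(\partial g\) at the pair \(x^k,x^{k+1}\). It gives \(\norm{\Delta_x^{k+1}}^2\le-\langle\Delta_x^{k+1},w^{k+1}\rangle\le\norm{w^{k+1}}^2\), so the whole cross term is bounded by \(\norm{w^{k+1}}^2\) at no cost to the left-hand side. Expanding \(\norm{w^{k+1}}^2\) then produces the structure you anticipated:
\begin{itemize}
\item \(\norm{\Delta_x^k}^2\) with coefficient \(1\), which with \(-\alpha\norm{\Delta_x^k}^2\) nets \(1-\alpha\); against the \(\tfrac12\norm{\Delta_x^k}^2\) in \(U^\alpha_k\) this gives the leading constant \(\alpha-\tfrac12\);
\item \(2\alpha\gamma\langle\Delta_x^k,\Delta_F^{k-1}\rangle\), which together with \(-\alpha^2\gamma\) gives the coefficient \(\alpha(2-\alpha)\gamma\);
\item \(-2(1+\alpha)\gamma\langle\Delta_x^k,\Delta_F^k\rangle\le0\), dropped by monotonicity;
\item \((1+\alpha)^2\gamma^2\norm{\Delta_F^k}^2\) and \(\alpha^2\gamma^2\norm{\Delta_F^{k-1}}^2\);
\item a cross term \(-2\alpha(1+\alpha)\gamma^2\langle\Delta_F^k,\Delta_F^{k-1}\rangle\) that your plan never mentions.
\end{itemize}

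That last term needs its own Young weight \(\mu\). With \(\mu=\tfrac{\alpha}{1+\alpha}\) and \(\lambda=2\alpha^2\bigl(1+\tfrac{\abs{2-\alpha}}{2\alpha\varepsilon}+\tfrac1\mu\bigr)\) one gets \(\tfrac\lambda2+(1+\mu)(1+\alpha)^2=(1+2\alpha)^2+\tfrac{\alpha\abs{2-\alpha}}{2\varepsilon}\). This is the actual origin of \((1+2\alpha)^2\) in \(C\); a single cross term with coefficient \(1+2\alpha\) would give only \(\tfrac12(1+2\alpha)^2\).

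Once this is in place, your endgame works. A Young parameter on \(\langle\Delta_x^k,\Delta_F^{k-1}\rangle\) proportional to \(1/L_F\) yields \(a-bt-dt^2\) with \(a=\alpha-\tfrac12\), \(b=\alpha\abs{2-\alpha}\), \(d=(1+2\alpha)^2\). Its positive root is \(c(\alpha)\), and your AM--GM step gives exactly \(C\bigl(c(\alpha)^2-t^2\bigr)\). The paper instead fixes \(\varepsilon=c(\alpha)\) and obtains this bound directly. Your Step 4 (Fej\'er monotonicity plus Opial) is a valid substitute for the paper's argument via \(U^\alpha_k(x^\infty)\to0\) along a subsequence.
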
%
			}}%
			\hfill\null

		\subsection{The general case: The adaptive \texorpdfstring{\refadaFRB}{adaFRBα}}

			We now derive a family of adaptive schemes for the iteration family \eqref{eq:alg-family} that can cope with mere \emph{local} Lipschitzianity of \(F\).
			In order to motivate our parameter choices in the proposed algorithms, we begin with a general inequality that serves as the basis for the construction.
			It applies to any sequence recursively generated via the update in \eqref{eq:alg-family} for arbitrary parameters \(\thetk*\geq0\), and uses the local Lipschitz estimates
			\begin{subequations}\label{eq:Lklk}
				\begin{align}
					\Lk
				\coloneqq{} &
					\tfrac{\norm{F(\xk)-F(\xk')}}{\norm{\xk-\xk'}}
				\shortintertext{and}
					\lk
				\coloneqq{} &
					\tfrac{\innprod{F(\xk)-F(\xk')}{\xk-\xk'}}{\norm{\xk-\xk'}^2}
				\end{align}
			\end{subequations}
			(with the convention that \([\frac{0}{0}]=0\)).
			Note that one always has that \(0\leq\lk\leq\Lk\), where the first inequality owes to monotonicity of \(F\) and the second one to the Cauchy--Schwarz inequality.
			Throughout, for brevity we denote
			\begin{equation}\label{eq:Dk}
				\Dxk
			\coloneqq
				\xk-\xk'
			\quad\text{and}\quad
				\DFk
			\coloneqq
				F(\xk)-F(\xk'),
			\end{equation}
			noting in particular that \(\norm{\DFk}=\Lk\norm{\Dxk}\) and \(\innprod{\DFk}{\Dxk}=\lk\norm{\Dxk}^2\).
			
			We also associate with each iterate \(\xk\) a particular subgradient of \(g\).
			Indeed, by the characterization of the proximal operator \cite[Prop. 16.44]{bauschke2017convex}, the definition of \(\xk\) in
			\eqref{eq:xk*} yields
			\begin{equation}\label{eq:tildeg}
				\nabla* g(\xk)
			\coloneqq
				-\tfrac{\Dxk}{\gamk}-\uk
			\in
				\partial g(\xk).
			\end{equation}
			
			\begin{lemma}[general inequality]\label{thm:main:innprods}%
				Consider the iterates \eqref{eq:alg-family}, with \(\uk\coloneqq F(\xk)+\thetk*(F(\xk)-F(\xk'))\) for some \(\thetk*\geq0\), \(k\in\N\).
				Then, with \(\Lk\) and \(\lk\) as in \eqref{eq:Lklk}, \(\Dxk\) and \(\DFk\) as in \eqref{eq:Dk}, and \(\Vk\) as in \eqref{eq:Vk}, for any \(k\geq2\) and \(x\in\dom g\) the following holds:
				\begin{align*}
				&
					\tfrac{1}{2}
					\norm{\xk*-x}^2
					+
					\tfrac{1}{2}
					\norm{\Dxk*}^2
					+
					\gamk*(1+\thetk*)\Vk(x)
				\\
				\leq{} &
					\tfrac{1}{2}
					\norm{\xk-x}^2
					+
					\thetk*\gamk*\Vk'(x)
					+
					\thetk^2\gamk^2\Lk'^2
					\rhok*^2
					\norm{\Dxk'}^2
				\\
				&
					+
					\rhok*^2
					\biggl[
						(1+\thetk*)^2\gamk^2\Lk^2-2(1+\thetk*)\gamk\lk+1
						-
						\tfrac{\thetk*}{\rhok*}
					\biggr]
					\norm{\Dxk}^2
				\\
				&
					+
					\thetk\gamk\rhok*(2\rhok*-\thetk*)
					\innprod{\Dxk}{\DFk'}
					-
					2\thetk(1+\thetk*)\gamk^2
					\rhok*^2
					\innprod{\DFk}{\DFk'}.
				\end{align*}
				In particular, selecting \(\thetk*=\alpha\rhok*\) for some \(\alpha\geq0\) results in
				\begin{align*}
				&
					\tfrac{1}{2}
					\norm{\xk*-x}^2
					+
					\tfrac{1}{2}
					\norm{\Dxk*}^2
					\gamk*(1+\alpha\rhok*)\Vk(x)
				\\
				\leq{} &
					\tfrac{1}{2}
					\norm{\xk-x}^2
					+
					\alpha\rhok*\gamk*\Vk'(x)
					+
					\alpha^2\rhok^2\gamk^2\Lk'^2
					\rhok*^2
					\norm{\Dxk'}^2
				\\
				&
					+
					\rhok*^2
					\biggl[
						(1+\alpha\rhok*)^2\gamk^2\Lk^2-2(1+\alpha\rhok*)\gamk\lk+1
						-
						\alpha
					\biggr]
					\norm{\Dxk}^2
				\\
				&
					+
					\alpha\rhok\gamk\rhok*^2(2-\alpha)
					\innprod{\Dxk}{\DFk'}
					-
					2\alpha\rhok(1+\alpha\rhok*)\gamk^2
					\rhok*^2
					\innprod{\DFk}{\DFk'}.
				\numberthis\label{eq:main}
				\end{align*}
			\end{lemma}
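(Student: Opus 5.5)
We use the prox characterization \eqref{eq:tildeg} for \(\xk*\), namely \(\nabla* g(\xk*)=-\tfrac{\Dxk*}{\gamk*}-\uk*\in\partial g(\xk*)\), together with its analogue at index \(k\). We also write \(\uk*=F(\xk)+\thetk*\DFk\).

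\emph{Three-point identity.} The subgradient inequality \(g(x)\geq g(\xk*)+\innprod{\nabla* g(\xk*)}{x-\xk*}\), multiplied by \(\gamk*\), combined with \(\innprod{\Dxk*}{x-\xk*}=\tfrac12\norm{\xk-x}^2-\tfrac12\norm{\xk*-x}^2-\tfrac12\norm{\Dxk*}^2\), gives
\[
\tfrac12\norm{\xk*-x}^2+\tfrac12\norm{\Dxk*}^2+\gamk*\bigl(g(\xk*)-g(x)\bigr)\leq\tfrac12\norm{\xk-x}^2+\gamk*\innprod{\uk*}{x-\xk*}.
\]
This step only uses convexity of \(g\).

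\emph{Making \(\Vk(x)\) and \(\Vk'(x)\) appear.} Neither \(g(\xk*)\) nor \(\xk*\) occurs in the target inequality, so they must be traded for \(g(\xk)\) and \(g(\xk')\).
\begin{enumerate}
\item Split \(x-\xk*=(x-\xk)-\Dxk*\). Since \(\innprod{\uk*}{x-\xk}=(1+\thetk*)\innprod{F(\xk)}{x-\xk}-\thetk*\innprod{F(\xk')}{x-\xk}\), the term \(\gamk*(1+\thetk*)\Vk(x)\) appears on the left once we add and subtract \((1+\thetk*)\gamk* g(\xk)\).
\item The \(F(\xk')\) part, \(\innprod{F(\xk')}{x-\xk}=\innprod{F(\xk')}{x-\xk'}-\innprod{F(\xk')}{\Dxk}\), produces \(\thetk*\gamk*\Vk'(x)\) after adding \(\thetk*\gamk* g(\xk')\).
\item The leftover \(g\)-differences \(g(\xk*)-g(\xk)\) and \(g(\xk)-g(\xk')\) are bounded by subgradient inequalities with the explicit subgradients \eqref{eq:tildeg}. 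Use \(g(\xk)-g(\xk*)\leq\innprod{\nabla* g(\xk)}{\Dxk*}\), and use \(g(\xk)-g(\xk')\leq\innprod{\nabla* g(\xk)}{\Dxk}=-\tfrac1{\gamk}\norm{\Dxk}^2-\innprod{\uk}{\Dxk}\).
\end{enumerate}
The second bound is the source of the \(-\tfrac{\thetk*}{\rhok*}\norm{\Dxk}^2\) term: dividing by \(\gamk\) with coefficient \(\thetk*\gamk*\) gives \(\thetk*\rhok*\), which becomes \(\tfrac{\thetk*}{\rhok*}\) after factoring out \(\rhok*^2\). Expanding \(\uk=F(\xk')+\thetk\DFk'\) yields the mixed term in \(\innprod{\Dxk}{\DFk'}\) with coefficient involving \(\thetk\).

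\emph{Collecting the remaining terms.} After this bookkeeping, what remains is an inner product of the form \(\gamk*\innprod{\Dxk*}{w}\), with \(w\) a combination of \(\uk*-\uk\), \(\tfrac{\Dxk}{\gamk}\) and \(\DFk\). We use Young's inequality \(\innprod{\Dxk*}{w}\leq\tfrac12\norm{\Dxk*}^2\cdots\), or the exact identity \(\norm{\Dxk*}^2\) absorption, and choose the weights so that the \(\norm{\Dxk*}^2\) on the left is exactly consumed. This produces \(\gamk*^2\norm{w}^2=\rhok*^2\norm{\gamk w}^2\), where \(\gamk w=\Dxk-(1+\thetk*)\gamk\DFk+\thetk\gamk\DFk'\) or similar.

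Expanding this square gives \(\norm{\Dxk}^2\bigl[1-2(1+\thetk*)\gamk\lk+(1+\thetk*)^2\gamk^2\Lk^2\bigr]\) and the cross terms in \(\innprod{\Dxk}{\DFk'}\) and \(\innprod{\DFk}{\DFk'}\). It also gives a \(\thetk^2\gamk^2\norm{\DFk'}^2=\thetk^2\gamk^2\Lk'^2\norm{\Dxk'}^2\) term, which carries the factor \(\rhok*^2\).

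The main obstacle is the coefficient matching. The \(\innprod{\Dxk}{\DFk'}\) terms arrive from two sources, the square and the \(g(\xk)-g(\xk')\) estimate, and they must combine exactly into \(\thetk\gamk\rhok*(2\rhok*-\thetk*)\). I would first guess \(w\) so that the \(\norm{\Dxk*}^2\) term cancels cleanly, then adjust the choice of which subgradient inequalities to use until the coefficients match. The specialization \(\thetk*=\alpha\rhok*\) (and \(\thetk=\alpha\rhok\)) is then direct substitution: \(\tfrac{\thetk*}{\rhok*}=\alpha\), and \(2\rhok*-\thetk*=\rhok*(2-\alpha)\).
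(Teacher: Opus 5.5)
Your first two steps match the paper's proof. The three subgradient inequalities you use are exactly the ones the paper sums: at \(x^{k+1}\) towards \(x\), at \(x^k\) towards \(x^{k+1}\), and (scaled by \(\vartheta_{k+1}\)) at \(x^k\) towards \(x^{k-1}\). After the bookkeeping, the remainder is \(-\rho_{k+1}\innprod{x^{k+1}-x^k}{w^{k+1}}\), with \(w^{k+1}=-(x^k-x^{k-1})-\gamma_k(u^k-u^{k+1})\); this is minus your \(\gamma_k w\). There is one minor sign slip: the subgradient inequality gives \(g(x^k)-g(x^{k+1})\leq-\innprod{\nabla* g(x^k)}{x^{k+1}-x^k}\), not \(+\).

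The gap is in the last step. The target keeps \(\tfrac12\norm{x^{k+1}-x^k}^2\) on the left, and it has \(\rho_{k+1}^2\norm{w^{k+1}}^2\) with coefficient one on the right. The factor \(2\) in \(2\rho_{k+1}-\vartheta_{k+1}\), and the unhalved bracket multiplying \(\norm{x^k-x^{k-1}}^2\), both come from expanding this full square. No Young split \(-\rho_{k+1}\innprod{x^{k+1}-x^k}{w^{k+1}}\leq\tfrac{\eta}{2}\norm{x^{k+1}-x^k}^2+\tfrac{\rho_{k+1}^2}{2\eta}\norm{w^{k+1}}^2\) can deliver this:
\begin{itemize}
\item With \(\eta=1\), the left term is consumed and only \(\tfrac12\rho_{k+1}^2\norm{w^{k+1}}^2\) remains. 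The mixed coefficient then becomes \(\vartheta_k\gamma_k\rho_{k+1}(\rho_{k+1}-\vartheta_{k+1})\), which gives \(1-\alpha\) instead of \(2-\alpha\) after specialization.
\item With \(\eta=\tfrac12\), the right-hand coefficient is correct, but only \(\tfrac14\norm{x^{k+1}-x^k}^2\) survives on the left.
\end{itemize}
So the coefficient matching you flag as the main obstacle cannot be fixed by rearranging the subgradient inequalities you chose.

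The missing ingredient is firm nonexpansiveness of \(\prox_{\gamma_{k+1}g}\), equivalently monotonicity of \(\partial g\) between \(x^k\) and \(x^{k+1}\); this is the paper's FNE-like lemma. Writing \(x^k=\prox_{\gamma_{k+1}g}\bigl(x^k+\gamma_{k+1}\nabla* g(x^k)\bigr)\) gives \(\norm{x^{k+1}-x^k}^2\leq-\rho_{k+1}\innprod{x^{k+1}-x^k}{w^{k+1}}\). By Cauchy--Schwarz this yields \(\norm{x^{k+1}-x^k}\leq\rho_{k+1}\norm{w^{k+1}}\), and hence \(-\rho_{k+1}\innprod{x^{k+1}-x^k}{w^{k+1}}\leq\rho_{k+1}^2\norm{w^{k+1}}^2\). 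This bounds the remainder without spending the \(\tfrac12\norm{x^{k+1}-x^k}^2\) on the left. Equivalently, you can keep your \(\eta=1\) Young step and then add \(\tfrac12\norm{x^{k+1}-x^k}^2\leq\tfrac12\rho_{k+1}^2\norm{w^{k+1}}^2\) to both sides. Expanding \(\rho_{k+1}^2\norm{w^{k+1}}^2\) then produces every stated coefficient exactly.
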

			\begin{proof}
				See \cref{proof:thm:main:innprods}.
			\end{proof}
			
			The coefficients multiplying \(\norm{\Dxk}^2\) and \(\innprod{\Dxk}{\DFk'}\) suggest choosing \(\thetk*\) as a multiple of \(\rhok*\) as a natural choice, which leads to the inequality in \eqref{eq:main}.
			Interestingly, this choice also yields, as a byproduct, a golden-ratio-type bound on consecutive stepsizes, in line with existing adaptive schemes.
			We henceforth thus consider \(\thetk* = \alpha\rhok*\), where \(\alpha\in[1, 2]\) is a user-specified parameter, noting that when \(\alpha=2\) the first inner product vanishes and the analysis considerably simplifies.
			While any \(\alpha > \nicefrac{1}{2}\) could in principle be considered, we intentionally restrict to this range; in particular, values \(\alpha > 2\) would introduce additional case distinctions depending on the sign of \(2-\alpha\), which we avoid for clarity of presentation.
			
			Regardless of these specific choices, our convergence analysis revolves around the derivation of a merit function \(U_k\geq0\) for the iterates, where the selection of the next stepsize \(\gamk*\) is made in such a way to ensure a decrease condition \(U_{k+1}\leq U_k-a\norm{\Dxk}^2\) for some \(a>0\).
			Because of the ``asynchrony'' of the arguments of the inner products within \cref{thm:main:innprods} we cannot rely on, say, monotonicity of \(F\) to infer their sign, which is why we shall convert them into square norms.
			The family of adaptive algorithms \refadaFRB{} is obtained by bounding these inner products via Young's inequality.
			
			While conservative, this choice allows for a conveniently simple update rule, outlined in \cref{alg:adaFRB}.
			The employment of less conservative bounds will be discussed in the next subsection, leading to theoretically larger stepsizes at the expense of a more involved stepsize update rule.
			
			\begin{algorithm}[tb]
				\caption{\AdaFRB}
				\label{alg:adaFRB}%

					\begin{algorithmic}[1]
		\itemsep=5pt
		\item[{%
			Choose \(\alpha\in[1,2]\), \(x^0\in\R^n\), \(\gamma_0>0\), and \(L_0>0\)%
		}]
		\item[Set \(x^{-1}=x^0\), \(\rho_0=1\), \(c=\frac{1}{7-\alpha}\), and repeat for \(k=0,1,\dots\)]
		
		\State \label{state:adaFRB:rhok*}%
			\(
				\gamk*
			=
				\min\left\{
					\gamk
					\sqrt{\tfrac{1}{\alpha}+\rhok}
				,\,
					\left(
						\tfrac{2}{3}+\tfrac{2\alpha}{5}
					\right)
					\gamk
				,\,
					\frac{c}{\Lk}
				\right\}
			\)
		
		\State
			\(
				\rhok*=\frac{\gamk*}{\gamk}
			\)

		\State
			\(
				\uk*
			=
				F(\xk)
				+
				\alpha\rhok*
				(F(\xk)-F(\xk'))
			\)
		
		\State
			\(
				\xk*
			=
				\prox_{\gamk* g}\bigl(
					\xk
					-
					\gamk*\uk*
				\bigr)
			\)
		\end{algorithmic}
		
			\end{algorithm}
			
			The main convergence properties of \refadaFRB{} are provided next.
			
			\begin{theorem}[convergence of {\protect\refadaFRB}]\label{thm:general}%
				Suppose that \cref{ass:f,ass:g,ass:sol} hold, and consider the iterates generated by \refadaFRB{} for some \(\alpha\in[1,2]\).
				For \(k\in\N\), define \(\func{\Uk}{\R^n}{\Rinf}\) as
				\begin{equation}
				\label{eq:Uk}
					\Uk(x)
				\coloneqq
					\tfrac{1}{2}
					\norm{\xk-x}^2
					+
					\tfrac{1}{2}
					\norm{\xk-\xk'}^2
					+
					\tfrac{\lambda(\gamk\rhok)^2}{2}
					\norm{F(\xk')-F(\xk'')}^2
					+
					(1+\alpha\rhok)
					\gamk\Vk'(x).
				\end{equation}
				Then, for any solution \(x^\star\in\zer T\) the following hold:
				\begin{enumerate}
				\item \label{thm:FRBa:Lyapunov}%
					{\upshape(Lyapunov descent)}
					\(0\leq\Uk*(x^\star)\leq\Uk(x^\star)\) holds for any \(k\in\N\) and \(x^\star\) solution to \eqref{eq:P};
					in particular, \(\seq{x^k}\) is bounded.
			
				\item \label{thm:FRBa:gammin}%
					{\upshape(Stepsize magnitude)}
					\(\gamk\geq\min\set{\gamma_0,\frac{c}{\sqrt{\alpha}L_{F,\V}}}\) for any \(k\in\N\), where \(L_{F,\V}\) is a Lipschitz modulus for \(F\) on a compact and convex set \(\V\) containing all the iterates \(\xk\).
			
				\item \label{thm:FRBa:gap}%
					{\upshape(Stampacchia gap)}
					\(\displaystyle\min_{k\leq K}\Vk(x^\star)\leq\tfrac{\Uk_1(x^\star)}{\sum_{k=1}^{K+1}\gamk}\) \(\forall K\in\N\), and in particular vanishes with rate \(O(1/K)\).%

				\item \label{thm:FRBa:gamavg}%
					{\upshape(Average stepsize magnitude)}
					\(\frac{1}{K}\sum_{k=1}^K\gamk\geq\min\set{\gamma_0,\frac{c}{L_{F,\V}}}\) for any \(K\in\N\).
			
				\item \label{thm:FRBa:seqcvg}%
					{\upshape(Sequence convergence)}
					\(\seq{\xk}\) converges to a solution \(x^\infty\) with \(\Uk(x^\infty)\to0\), and \(\lim_{k\to\infty}\Vk(x^\star)=0\) for any \(x^\star\in\zer T\).
				\end{enumerate}
			\end{theorem}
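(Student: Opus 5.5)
The plan is to start from inequality \eqref{eq:main} of \cref{thm:main:innprods} with \(\thetk*=\alpha\rhok*\) and \(x=x^\star\), and convert the two ``asynchronous'' inner products into squared norms via Young's inequality.

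\emph{Descent inequality.} Using Cauchy--Schwarz and \(\norm{\DFk}=\Lk\norm{\Dxk}\), \(\norm{\DFk'}=\Lk'\norm{\Dxk'}\), I would bound
\(\innprod{\Dxk}{\DFk'}\leq\tfrac{\varepsilon}{2}\norm{\Dxk}^2+\tfrac{1}{2\varepsilon}\norm{\DFk'}^2\)
(this term has coefficient \((2-\alpha)\geq0\) and vanishes at \(\alpha=2\)). Similarly,
\(-2\innprod{\DFk}{\DFk'}\leq\eta\norm{\DFk}^2+\eta^{-1}\norm{\DFk'}^2\).
All \(\norm{\DFk'}^2\) contributions, together with \(\alpha^2\rhok^2\gamk^2\Lk'^2\rhok*^2\norm{\Dxk'}^2\), are to be absorbed by the term \(\tfrac{\lambda(\gamk\rhok)^2}{2}\norm{F(\xk')-F(\xk'')}^2\) in \(\Uk\). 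The \(\norm{\DFk}^2\) contributions produce the corresponding term of \(\Uk*\), namely \(\tfrac{\lambda(\gamk*\rhok*)^2}{2}\Lk^2\norm{\Dxk}^2\). The cap \(\rhok*\leq\tfrac23+\tfrac{2\alpha}5\) keeps these coefficients uniformly bounded.

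For the gap terms, I compare with \(\Uk\). The left side yields \(\gamk*(1+\alpha\rhok*)\Vk\), which matches the definition of \(\Uk*\). The right side contains \(\alpha\rhok*\gamk*\Vk'\), while \(\Uk\) contains \((1+\alpha\rhok)\gamk\Vk'\). The difference is nonnegative precisely when \(\alpha\rhok*^2\leq1+\alpha\rhok\), i.e.\ \(\gamk*\leq\gamk\sqrt{1/\alpha+\rhok}\); this is the first term of the min, and it leaves a surplus of at least \(\gamk\Vk'\).

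It then remains to make the coefficient of \(\norm{\Dxk}^2\) nonpositive. I expect this to be the main obstacle. Using \(\lk\geq0\), the coefficient is dominated by
\(\tfrac12-\rhok*^2(1-\alpha)-\rhok*^2\bigl[(1+\alpha\rhok*)^2\gamk^2\Lk^2+\dots\bigr]\),
plus the Young-term contributions in \(\gamk*\Lk\). With \(\gamk*\Lk\leq c=\tfrac1{7-\alpha}\) and \(\rhok*\) bounded, I would choose \(\lambda,\varepsilon,\eta\) (likely \(\varepsilon,\eta\) proportional to \(\gamk*\)) so that the result is \(\leq-a\norm{\Dxk}^2\). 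This tuning is the delicate algebra; the specific constants \(\tfrac23+\tfrac{2\alpha}5\) and \(7-\alpha\) are presumably the outcome of it. Nonnegativity of \(\Uk\) follows from \(\Vk'(x^\star)\geq0\), and the descent gives boundedness (item 1). The small-\(k\) cases (\(x^{-1}=x^0\)) would be handled separately.

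\emph{Stepsizes (items 2 and 4).} On the compact set \(\V\) we have \(\Lk\leq L_{F,\V}\). Whenever \(\gamk*<\gamk\), the min must be attained either at \(c/\Lk\geq c/L_{F,\V}\) or at a growth term; the latter is impossible unless \(\rhok<1\). If \(\gamk*=c/\Lk\) then \(\gamk*\geq c/L_{F,\V}\), and a drop is followed by growth since \(\sqrt{1/\alpha+\rhok}\) recovers. An induction yields \(\gamk\geq\min\set{\gamma_0,c/(\sqrt\alpha L_{F,\V})}\), where the factor \(\sqrt\alpha\) comes from one step with \(\rhok\) small giving ratio \(\geq\sqrt{1/\alpha}\). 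For the average, I would pair each such reduced step with the preceding step, which is \(\geq c/L_{F,\V}\), and use AM-type arguments to drop the \(\sqrt\alpha\) factor.

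\emph{Rates and convergence (items 3 and 5).} Telescoping the descent with surplus \(\gamk\Vk'(x^\star)\) gives \(\sum\gamk\Vk'\leq\Uk_1\), hence the min-bound. Combined with item 4, this yields \(O(1/K)\). Summability gives \(\norm{\Dxk}\to0\) and \(\Vk\to0\). Then \(\nabla* g(\xk)+F(\xk)\to0\) by \eqref{eq:tildeg}, because the stepsizes are bounded below and \(\rhok\) is bounded. Thus cluster points are solutions by outer semicontinuity of \(\partial g\). Since \(\Uk(x^\star)\) converges for every solution and all terms except \(\tfrac12\norm{\xk-x^\star}^2\) vanish, the standard Opial argument gives convergence of \(\seq{\xk}\), and \(\Uk(x^\infty)\to0\) follows.
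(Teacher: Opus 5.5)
\paragraph{Main gap: the coefficient tuning in item 1.}
Your architecture matches the paper's: specialize \cref{thm:main:innprods} to $\vartheta_{k+1}=\alpha\rho_{k+1}$, apply Young to both inner products, absorb every $\norm{F(x^{k-1})-F(x^{k-2})}^2$ into the $\lambda$-term of $\mathcal U^\alpha_k$, and read $A_{k+1}\ge0$ off the growth term. However, the step you defer as ``delicate algebra'' is exactly the content of item~1, and of the bound $\liminf_k C_k>0$ that item~5 needs. The hints you give would not close it.
(i) The cap $\frac23+\frac{2\alpha}5$ is not the right bound on $\rho_{k+1}$. You must also use that $\rho_{k+1}\le\sqrt{1/\alpha+\rho_k}$ with $\rho_0=1$ forces $\rho_{k+1}\le\rho_{\max}=\frac12\bigl(1+\sqrt{1+4/\alpha}\bigr)$, and then work with $\beta=\min\{\frac23+\frac{2\alpha}5,\rho_{\max}\}$. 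For $\alpha$ close to $2$ the cap is never active. Within the worst-case Young analysis you propose, the largest safety coefficient certifiable at $\alpha=2$ is $\bar c(\beta)=\sqrt{1+2\beta^2}/\bigl(\sqrt2\,(1+2\beta+2\beta^2)\bigr)$. This is $\approx0.1977<\frac15$ for $\beta=\frac{22}{15}$, but $\approx0.2061$ for $\beta=\rho_{\max}=\frac{1+\sqrt3}{2}$. So with the cap alone the argument fails near $\alpha=2$.
(ii) The Young parameters must be scale-invariant: $\varepsilon/(\gamma_k\rho_k)$ for $\langle\Delta_x^k,\Delta_F^{k-1}\rangle$, and $\mu(1+\alpha\rho_{k+1})/(\alpha\rho_k)$ for $\langle\Delta_F^k,\Delta_F^{k-1}\rangle$. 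Taking them proportional to $\gamma_{k+1}$ leaves terms such as $\gamma_k\gamma_{k+1}\norm{\Delta_x^k}^2$, which $\gamma_{k+1}L_k\le c$ cannot control.
(iii) You then need the optimized values $\mu=\frac{\alpha\beta^2}{1+\alpha\beta}$, $\lambda=2(\alpha\beta)^2\bigl(1+\frac{2-\alpha}{2\alpha\varepsilon}+\frac1\mu\bigr)$ and $\varepsilon=\beta\bar c$, with $\bar c$ the positive root of \eqref{eq:adaFRB:c1}. You also need the observation that the lower bound on $C_{k+1}$ decreases in $\rho_{k+1}$ on $[0,\beta]$, so the worst case is $\rho_{k+1}=\beta$. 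Finally you must verify $\frac1{7-\alpha}<\bar c$ on $[1,2]$. Its minimal margin is only about $2.7\cdot10^{-3}$, at $\alpha=1$, so essentially nothing short of the optimized parameters works.

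\paragraph{Errors in the downstream steps.}
\emph{Surplus claim.} The claim that $A_{k+1}\ge0$ leaves a surplus of at least $\gamma_kV_{k-1}(x^\star)$ is false. The surplus is $\gamma_k(1+\alpha\rho_k-\alpha\rho_{k+1}^2)V_{k-1}(x^\star)$, which vanishes whenever the growth term is active. Item~3 survives only through the telescoping of the weights, $\sum_{k=1}^K\gamma_k(1+\alpha\rho_k-\alpha\rho_{k+1}^2)=\sum_{k=1}^K\gamma_k+\alpha\gamma_1\rho_1-\alpha\gamma_{K+1}\rho_{K+1}$. You must also keep, rather than discard, the term $(1+\alpha\rho_{K+1})\gamma_{K+1}V_K(x^\star)$ of $\mathcal U^\alpha_{K+1}(x^\star)$.
\emph{Limit of $V_k$.} For the same reason, summability cannot give $V_k(x^\star)\to0$, since the weights may vanish. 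The full limit instead follows from the subgradient inequality at $x^k$:
$0\le V_k(x^\star)\le\langle F(x^k)+\nabla* g(x^k),x^k-x^\star\rangle\le\norm{F(x^k)+\nabla* g(x^k)}\norm{x^k-x^\star}\to0$.
This is also what your Opial step needs to kill the last term of $\mathcal U^\alpha_k(x^\star)$.
\emph{Average stepsize (item~4).} Pairing a short step with its predecessor alone fails. For $\alpha>1$, $\gamma_k=s_{\min}$ and $\rho_k\to0$ give $\gamma_k+\gamma_{k+1}\to(1+\alpha^{-1/2})s_{\min}<2s_{\min}$. You must also count $\gamma_{k-1}=\gamma_k/\rho_k$ and handle runs of consecutive growth-limited steps, as in \cref{thm:gamavg}. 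The argument hinges on the cap dominating $\sqrt\alpha$, namely $\frac23+\frac{2\alpha}5\ge\sqrt\alpha+\frac1{24}$, which your sketch never invokes. This is exactly why the paper does not claim the refined average bound for the $+$ variant.
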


		\subsection{A tighter stepsize selection: \texorpdfstring{\refadaFRB+}{adaFRBα+}}

			In this last subsection we shall propose an algorithmic variant with an improved theoretical lower bound on the stepsize sequence.
			While the practical impact of this improvement is modest, as reported in our numerical experiments, the derivation hinges on a chain of (in)equalities that we find independently interesting.
			We therefore include this variant for the interested reader, as a theoretical refinement rather than a practical necessity.
			
			The key observation is that the employment of Young's inequality to bound the inner products \(\innprod{\Dxk}{\DFk'}\) and \(\innprod{\DFk}{\DFk'}\) within \cref{thm:main:innprods} is overly conservative.
			Indeed, note that both inner products depend on quantities available within iteration \(k\), and thus the knowledge of their value can be leveraged for the computation of the next stepsize \(\gamk*\).
			The following simple observation provides a tighter, possibly lossless, conversion of inner products into square norms, that is at the basis of the more refined update of \refadaFRB+, outlined in \cref{alg:adaFRB+}.
			
			\begin{algorithm}[tb]
				\caption{\AdaFRB+ (simplifies to \cref{alg:adaFRB+2} when \(\alpha=2\))}
				\label{alg:adaFRB+}%

					\begin{algorithmic}[1]
		\item[{%
			Choose \({\alpha\in[1,2]}\), \(x^0\in\R^n\), \(\gamma_0>0\) and \(L_0>0\)%
		}]
		\item[{%
			Set
			\(x^{-1}=x^{-2}=x^0\),~
			\(\rho_0=1\),
		}]
		\item[{%
			\hphantom{Set }%
			\(
				c
			=
				\varepsilon
			=
				\tfrac{
					2\alpha-1
				}{
					\alpha(2-\alpha)
					+
					\sqrt{\alpha^2(2-\alpha)^2+2(2\alpha-1)(2\alpha+1)^2}
				}
			\),~
			\(
				\mu=\tfrac{\alpha}{1+\alpha}
			\),~
			and~
			\(
				\lambda
			=
				2\alpha^2
				\bigl(
					1
					+
					\tfrac{1}{\mu}
					+
					\tfrac{2-\alpha}{2\alpha\varepsilon}
				\bigr)
			\)
		}]
		\item[{%
			Repeat for \(k=0,1,\dots\)%
		}]
		\itemsep=5pt
		\State \label{state:adaFRB+:betak}%
			\(
				[\cos\phi_k]_-
			=
				\frac{
					\left[\innprod{F(\xk)-F(\xk')}{F(\xk')-F(\xk'')}\right]_-
				}{
					\norm{F(\xk)-F(\xk')}
					\norm{F(\xk')-F(\xk'')}
				}
			\),
			\quad
			\(
				\tauk
			=
				\frac{
					\innprod{\xk-\xk'}{F(\xk')-F(\xk'')}
				}{
					\frac{\varepsilon}{2\gamk\rhok}
					\norm{\xk-\xk'}^2
					+
					\frac{\rhok\gamk}{2\varepsilon}
					\norm{F(\xk')-F(\xk'')}^2
				}
			\)
		
		\Statex
			\(
				\beta_k
			=
				\min\set{
					\sqrt{
						\tfrac{
							1
							+
							\frac{1}{\mu}
							+
							\frac{2-\alpha}{2\alpha\varepsilon}
						}{
							1
							+
							\frac{1}{\mu}
							[\cos\phi_k]_-
							+
							\tauk
							\frac{2-\alpha}{2\alpha\varepsilon}
						}
					}
					,\,
					\tilde\rho_k
				}
			\)
			\smash{%
				\begin{tabular}[t]{l@{}}
					where \(\tilde\rho_k\in[1,\infty]\) is the smallest solution \(\rho\)
				\\
					to the second-order equation \eqref{eq:C} (\(\infty\) if none exists)
				\end{tabular}%
			}%
		
		\State \label{state:adaFRB+:rhok*}%
			\(
				\gamk*
			=
				\min\set{
					\gamk
					\sqrt{
						\tfrac{1}{\alpha}+\rhok
					}
					,\,
					\beta_k
					\gamk
					,\,
					\tfrac{c}{\Lk}
				}
			\)
		\Comment
			\(
				\beta_k\geq1,
				~
				c\in[0.186,0.245]
			\)

		\State \label{state:adaFRB+:gamk*}%
			\(
				\rhok*
			=
				\frac{\gamk*}{\gamk}
			\)
		
		\State
			\(
				\uk*
			=
				F(\xk)
				+
				\alpha\rhok*
				\bigl(F(\xk)-F(\xk')\bigr)
			\)
		
		\State
			\(
				\xk*
			=
				\prox_{\gamk* g}\bigl(
					\xk
					-
					\gamk*\uk*
				\bigr)
			\)
		\end{algorithmic}
			\end{algorithm}
			
			\begin{lemma}[tighter Young's (in)equalities]\label{thm:Young}%
				For any \(a,b\in\R^n\setminus\set{0}\) and \(\varepsilon>0\) one has that
				\begin{gather}\label{eq:Youngtight}
					\innprod{a}{b}
				=
					\overbracket[0.5pt]{
						\cos\theta_{a,b}
						\vphantom{\tfrac{\norm{a}}{\norm{b}}}
					}^{\in[-1,1]}
					\,
					\overbracket[0.5pt]{
						\sigma\bigl(\varepsilon\tfrac{\norm{a}}{\norm{b}}\bigr)
					}^{\in(0,1]}
					\,
					\Bigl(
						\tfrac{\varepsilon}{2}\norm{a}^2
						+
						\tfrac{1}{2\varepsilon}\norm{b}^2
					\Bigr)
				\intertext{%
					where
					\(
						\sigma(t)
					\coloneqq
						\bigl(\frac{t}{2}+\frac{1}{2t}\bigr)^{-1}
					\in
						(0,1]
					\)
					for \(t>0\) and
					\(
						\cos\theta_{a,b}\coloneqq\tfrac{\innprod{a}{b}}{\norm{a}\norm{b}}\in[-1,1]
					\).
					In particular,
				}
				\label{eq:Youngcos_pos}
					-[\cos\theta_{a,b}]_-
					\left(
						\tfrac{\varepsilon}{2}\norm{a}^2
						+
						\tfrac{1}{2\varepsilon}\norm{b}^2
					\right)
				\leq
					\innprod{a}{b}
				\leq
					[\cos\theta_{a,b}]_+
					\left(
						\tfrac{\varepsilon}{2}\norm{a}^2
						+
						\tfrac{1}{2\varepsilon}\norm{b}^2
					\right)
				\quad
					\forall\varepsilon>0.
				\end{gather}
			\end{lemma}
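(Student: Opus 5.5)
The identity \eqref{eq:Youngtight} can be checked by direct computation, and the two-sided bounds \eqref{eq:Youngcos_pos} then follow from the ranges of the two factors. I would set \(t\coloneqq\varepsilon\tfrac{\norm{a}}{\norm{b}}>0\), which is well defined since \(a,b\neq0\), and factor the Young-type quantity as
\[
	\tfrac{\varepsilon}{2}\norm{a}^2+\tfrac{1}{2\varepsilon}\norm{b}^2
=
	\norm{a}\norm{b}\Bigl(\tfrac{\varepsilon\norm{a}}{2\norm{b}}+\tfrac{\norm{b}}{2\varepsilon\norm{a}}\Bigr)
=
	\norm{a}\norm{b}\Bigl(\tfrac{t}{2}+\tfrac{1}{2t}\Bigr)
=
	\frac{\norm{a}\norm{b}}{\sigma(t)}.
\]
Multiplying both sides by \(\sigma(t)\cos\theta_{a,b}\) gives \(\cos\theta_{a,b}\norm{a}\norm{b}\), which equals \(\innprod{a}{b}\) by the definition of \(\cos\theta_{a,b}\). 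This proves \eqref{eq:Youngtight}.

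For the ranges of the two factors, \(\cos\theta_{a,b}\in[-1,1]\) is the Cauchy--Schwarz inequality. For \(\sigma\), the AM--GM inequality gives \(\tfrac{t}{2}+\tfrac{1}{2t}\geq1\) for all \(t>0\), with equality iff \(t=1\). Hence \(\sigma(t)\in(0,1]\).

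For \eqref{eq:Youngcos_pos}, write \(\cos\theta_{a,b}=[\cos\theta_{a,b}]_+-[\cos\theta_{a,b}]_-\) and use that the bracket \(Q\coloneqq\tfrac{\varepsilon}{2}\norm{a}^2+\tfrac{1}{2\varepsilon}\norm{b}^2\) is positive. Since \(\sigma(t)\in(0,1]\), the identity gives
\[
	\innprod{a}{b}=\cos\theta_{a,b}\,\sigma(t)\,Q\leq[\cos\theta_{a,b}]_+\,\sigma(t)\,Q\leq[\cos\theta_{a,b}]_+\,Q.
\]
For the lower bound,
\[
	\innprod{a}{b}\geq-[\cos\theta_{a,b}]_-\,\sigma(t)\,Q\geq-[\cos\theta_{a,b}]_-\,Q.
\]
Both bounds hold for every \(\varepsilon>0\), because \(\varepsilon\) enters only through \(t\).

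The argument is elementary throughout; the one step needing care is the algebraic factorization through \(t\) in the first display, where one must track which norm sits in the numerator of \(t\).
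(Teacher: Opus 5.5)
Your proof is correct and takes the same route as the paper, which also reduces the identity to \(\innprod{a}{b}=\norm{a}\norm{b}\cos\theta_{a,b}\) combined with the factorization \(\norm{a}\norm{b}=\sigma\bigl(\varepsilon\tfrac{\norm{a}}{\norm{b}}\bigr)\bigl(\tfrac{\varepsilon}{2}\norm{a}^2+\tfrac{1}{2\varepsilon}\norm{b}^2\bigr)\). Your AM--GM check that \(\sigma\in(0,1]\) and your sign-splitting argument for the two-sided bound simply spell out details the paper treats as self-evident.
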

			
			Verifying identity \eqref{eq:Youngtight} is trivial once we note that \(\innprod{a}{b}=\norm{a}\norm{b}\cos\theta_{a,b}\) and
			\[
				\norm{a}\norm{b}
			=
				\sigma\bigl(\varepsilon\tfrac{\norm{a}}{\norm{b}}\bigr)
				\left[
					\tfrac{\varepsilon}{2}\norm{a}^2
					+
					\tfrac{1}{2\varepsilon}\norm{b}^2
				\right]
			\]
			holds for any \(\varepsilon>0\).
			While self-apparent, it conveniently decouples the contribution of the relative orientation of \(a\) and \(b\), captured by \(\cos\theta_{a,b}\) and independent of \(\varepsilon\), and the Young-like conversion into square norms, captured by \(\sigma\bigl(\varepsilon\tfrac{\norm{a}}{\norm{b}}\bigr)\) and independent of the relative orientation.
			The standard Young's inequality is recovered by replacing both \(\cos\) and \(\sigma\) by their uniform upper bound \(1\).
			More generally, depending on the available information one can suitably bound either parameter; for instance, \eqref{eq:Youngcos_pos} provides a bound independent of Young's parameter \(\varepsilon\) that, while lossy, is nonetheless tighter than the standard Young's inequality.
			
			In these terms, the quantity \(\tauk\) in \cref{state:adaFRB+:betak} of \refadaFRB+ corresponds to \(\cos\theta_{a,b}\sigma(\epsk\frac{\norm{a}}{\norm{b}})\) as in \cref{thm:Young} with \(a=\xk-\xk'\), \(b=F(\xk')-F(\xk'')\), and \(\epsk=\frac{\varepsilon}{\rhok}\), enabling lossless conversion of the first inner product in \cref{thm:main:innprods} as
			\begin{equation}\label{eq:tau-decomposition}
				\innprod{\Dxk}{\DFk'}
			=
				\tfrac{\tauk\epsk}{2}
				\norm{\Dxk}^2
				+
				\tfrac{\tauk}{2\epsk}
				\norm{\DFk'}^2
			\quad\text{with}\quad
				\tauk
			\in
				[-1,1].
			\end{equation}
			Importantly, since \(\tauk\) is used for the computation of the next stepsize \(\gamk*\) at \cref{state:adaFRB+:rhok*}, the associated parameter \(\epsk\) must depend only on quantities known before \(\tauk\) is evaluated.
			
			For the other inner product, our analysis will require a Young parameter \(\varepsilon\) depending on \emph{future} information, and this forces us to resort to the \emph{in}equality \eqref{eq:Youngcos_pos}, which retains the available angular factor \([\cos\phi_k]_-\) while bounding the remaining scaling factor by \(1\).
			
			The tighter nature of \refadaFRB+ over \refadaFRB{} lies in the upper bound enforced on \(\gamk*\Lk\) at each iteration, hence on the overall magnitude of the stepsizes.
			In \refadaFRB{} one has \(c=\frac{1}{7-\alpha}\), which varies linearly between \(\frac{1}{6}\approx 0.167\) and \(\frac{1}{5}=0.2\) as \(\alpha\) ranges in \([1,2]\).
			In \refadaFRB+ this parameter has a more involved formulation, defined at algorithm initialization, which varies between \(\frac{1}{1+\sqrt{19}}\approx 0.187\) and \(\frac{\sqrt{6}}{10}\approx 0.245\), corresponding to a 10--20\% increase over the same upper cap in \refadaFRB.
			The price for this improvement is twofold:
			on the one hand, a more intricate update rule for the stepsize, as is evident from \cref{state:adaFRB+:betak};
			on the other hand, a more volatile upper bound on the ratio \(\gamk*\leq\betk\gamk\) which, in a worst-case scenario, could go as low as \(\betk=1\), in contrast to a guaranteed constant \(\betk\equiv\frac{2}{3}+\frac{2\alpha}{5}>1\) in \refadaFRB.
			We nonetheless remark that this is only of theoretical relevance, since in all our simulations this parameter remains well above the threshold 1.

			\begin{theorem}[convergence of {\protect\refadaFRB+}]\label{thm:adaFRB+}%
				Suppose that \cref{ass:f,ass:g,ass:sol} hold, and consider the iterates generated by \refadaFRB+{} for some \(\alpha\in[1,2]\).
				Then, all the assertions of \cref{thm:FRBa:Lyapunov,,thm:FRBa:gammin,,thm:FRBa:gap} hold.
				Moreover, for any solution \(x^\star\in\zer T\) the following holds:
				\begin{enumerate}
				\item
					{\upshape(Ergodic subsequential convergence)}
					All cluster points of the (bounded) ergodic sequence
					\[
						\bar x^K
					\coloneqq
						\frac{
							\sum_{k=2}^{K}\alpha\gamma_k(\frac{1}{\alpha}+\rhok-\rhok*^2)\,x^{k-1}
							+
							(1+\alpha\rho_{K+1})\gamma_{K+1}\,x^K
						}{
							\sum_{k=2}^{K}\alpha\gamma_k(\frac{1}{\alpha}+\rhok-\rhok*^2)
							+
							(1+\alpha\rho_{K+1})\gamma_{K+1}
						}
					\]
					are solutions.
				\end{enumerate}
				Up to replacing the parameter \(c\) with any \(c'\in(0,c)\), one also has the following:
				\begin{enumerate}[resume]
				\item
					\(\lim_{k\to\infty}\Vk(x^\star)=0\) for any \(x^\star\in\zer T\).
				\item
					\(\seq{\xk}\) converges to a solution \(x^\infty\) with \(\Uk(x^\infty)\to0\).
				\end{enumerate}
			\end{theorem}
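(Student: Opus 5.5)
The plan is to follow the same Lyapunov route as for \cref{thm:general}, but to replace the two Young's bounds with the sharper conversions of \cref{thm:Young}. The starting point is inequality \eqref{eq:main} of \cref{thm:main:innprods} with \(\thetk*=\alpha\rhok*\), evaluated at \(x=x^\star\).

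\textbf{Step 1: convert the inner products.} I will rewrite the first inner product losslessly through \eqref{eq:tau-decomposition}, with \(\epsk=\varepsilon/\rhok\). Its coefficient \(\alpha\rhok\gamk\rhok*^2(2-\alpha)\) is nonnegative for \(\alpha\in[1,2]\), so it splits into a \(\norm{\Dxk}^2\) part and a \(\norm{\DFk'}^2\) part, each weighted by \(\tauk\). I will bound the second inner product from below with \eqref{eq:Youngcos_pos}, keeping the factor \([\cos\phi_k]_-\) and using a Young parameter proportional to \(\mu\) times a ratio of future quantities. This produces a term \(\tfrac1\mu[\cos\phi_k]_-\) on \(\norm{\DFk'}^2\), plus a term on \(\norm{\DFk}^2=\Lk^2\norm{\Dxk}^2\). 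Collecting everything, the right-hand side should take the form
\[
\tfrac12\norm{\xk-x^\star}^2+\alpha\rhok*\gamk*\Vk'(x^\star)+A_k\norm{\DFk'}^2+B_k\norm{\Dxk}^2 .
\]
Three comparisons then give the descent claim.

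\textbf{Step 2: the three comparisons.}
\begin{itemize}
\item The gap term telescopes because \(\rhok*^2\le\tfrac1\alpha+\rhok\) is equivalent to \(\alpha\rhok*\gamk*\le(1+\alpha\rhok)\gamk\). The leftover weight \(\alpha\gamk(\tfrac1\alpha+\rhok-\rhok*^2)\Vk'(x^\star)\ge0\) will later feed the gap and ergodic claims.
\item The coefficient \(A_k\), which equals \(\rhok*^2\gamk^2\rhok^2\alpha^2\bigl(1+\tfrac1\mu[\cos\phi_k]_-+\tauk\tfrac{2-\alpha}{2\alpha\varepsilon}\bigr)\), must be at most \(\tfrac{\lambda}{2}(\gamk\rhok)^2\). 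With the chosen \(\lambda\), this is exactly the square-root bound \(\rhok*\le\beta_k\).
\item The total coefficient of \(\norm{\Dxk}^2\), after adding \(\tfrac\lambda2(\gamk*\rhok*)^2\Lk^2\) for the new \(\Uk*\) term, must be \(\le0\). I will write this as a quadratic in \(\rho=\rhok*\), using \(\gamk*\Lk\le c\) and \(\lk\ge0\). I expect that the threshold \(c=\varepsilon\) is exactly what makes this quadratic nonpositive at \(\rho=1\), and that equation \eqref{eq:C} records its first root, so that \(\rhok*\le\tilde\rho_k\) secures the sign.
\end{itemize}
Together these yield \(\Uk*(x^\star)+\alpha\gamk(\tfrac1\alpha+\rhok-\rhok*^2)\Vk'(x^\star)\le\Uk(x^\star)\). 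This carries over \cref{thm:FRBa:Lyapunov}, and the proofs of \cref{thm:FRBa:gammin,thm:FRBa:gap} go through verbatim, since they only use \(\gamk*\le c/\Lk\) and \(\beta_k\ge1\). I expect this bookkeeping, in particular checking that the quadratic \eqref{eq:C} has the right sign at \(\rho=1\) for the stated \(c\), to be the main obstacle.

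\textbf{Step 3: ergodic convergence.} By monotonicity, \(\Vk(x)\ge g(\xk)-g(x)+\innprod{F(x)}{\xk-x}\), and the right-hand side is convex in \(\xk\). I will sum the descent inequality over \(k=2,\dots,K\) for arbitrary \(x\) in a bounded subset of \(\dom g\); this requires redoing Step 2 with \(x\) in place of \(x^\star\), which drops nonnegativity of \(\Vk\) but keeps the inequality. Jensen's inequality then bounds the restricted dual gap at \(\bar x^K\) by \(\Uk_2(x)\) divided by the total weight. The weights telescope: the denominator equals \((1+\alpha\rho_2)\gamma_2+\sum_{k=3}^{K+1}\gamk\), which diverges by the stepsize lower bound. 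Any cluster point \(\bar x\) therefore satisfies \(g(\bar x)-g(x)+\innprod{F(x)}{\bar x-x}\le0\) for all \(x\in\dom g\), so \(\bar x\) solves the Minty problem. By continuity and monotonicity of \(F\) (a Minty-type argument), \(\bar x\) then solves \eqref{eq:VI}.

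\textbf{Step 4: replacing \(c\) by \(c'<c\).} With \(c'<c\) the \(\norm{\Dxk}^2\) coefficient becomes strictly negative, uniformly bounded away from zero by the margin \(c^2-c'^2\). Hence \(\sum_k\norm{\Dxk}^2<\infty\) and \(\Dxk\to0\). Using \eqref{eq:tildeg}, the bounded stepsizes and local Lipschitz continuity, every cluster point of \(\seq{\xk}\) is a zero of \(T\). Moreover \(\Vk(x^\star)\to0\): writing \(\Vk\) through \(\nabla* g(\xk)\) gives \(\Vk(x^\star)\le\innprod{\Dxk/\gamk+\uk-F(\xk)}{x^\star-\xk}\), which vanishes by boundedness of the iterates. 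Since \(\Uk(x^\star)\) converges for every solution and its last three terms vanish, \(\norm{\xk-x^\star}\) converges for every \(x^\star\). An Opial-type argument then gives \(\xk\to x^\infty\in\zer T\) and \(\Uk(x^\infty)\to0\).
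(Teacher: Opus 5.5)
\textbf{Overall.} Your plan is essentially the paper's proof. Steps 1--2 reproduce \cref{thm:Lyapunov}: your three comparisons are exactly \(A_{k+1},B_{k+1},C_{k+1}\geq0\), each enforced by one of the three terms of the minimum. Your expectation about \(c\) is correct. At \(\rho=1\) with \(\tauk=[\cos\phi_k]_-=1\), the right-hand side of \eqref{eq:Clb} vanishes precisely for \(c=\varepsilon=c(\alpha)\) as in \eqref{eq:FRBa_gamma}. This worst-case quadratic is also positive at \(\rho=0\) and has slope \(2c^2(1+2\alpha)-1<0\) at \(\rho=1\), so it is nonnegative on \([0,1]\) and \(\tilde\rho_k\geq1\).

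Steps 3--4 reproduce \cref{thm:recipe}. This includes the uniform slack of at least \(c^2-c'^2\) in \eqref{eq:Clb}, provided \(c'\) replaces \(c\) only in the safeguard and not in \eqref{eq:C}. Your two deviations from the paper are harmless and slightly leaner:
\begin{itemize}
\item You close the ergodic claim by letting \(K\to\infty\) for each fixed \(x\in\dom g\) and invoking Minty's lemma. The paper instead bounds \(\Gap_{x^\star,\delta}\) and uses \cref{thm:Gap:solutions}.
\item You get \(\Vk(x^\star)\to0\) directly from the subgradient inequality at \(\nabla* g(\xk)\), which is what \cref{thm:descent-star} encodes.
\end{itemize}

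\textbf{The gap.} One step in Step 4 does not go through as written. You infer that the last three terms of \(\Uk(x^\star)\) vanish from \(\Dxk\to0\) and \(\Vk(x^\star)\to0\). But those terms are \(\tfrac{\lambda}{2}(\gamk\rhok)^2\norm{\DFk'}^2\) and \((1+\alpha\rhok)\gamk\Vk'(x^\star)\), and nothing bounds \(\gamk\) from above. The only cap, \(\gamk*\leq c'/\Lk\), is not uniform when the local estimates \(\Lk\) become small, and then \(\gamk\) may grow without bound. So \(\Vk'(x^\star)\to0\) alone does not control \(\gamk\Vk'(x^\star)\).

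The missing ingredient is to use the coupled bound \(\gamk\Lk'\leq c'\) rather than a bound on \(\gamk\) alone. Note that \(\uk'-F(\xk')=-\DFk'+\alpha\rhok'\DFk''\) and \(\gamk=\rhok\gamk'\). Multiplying your estimate by \(\gamk\) then yields
\[
\gamk\Vk'(x^\star)
\leq
\bigl(\rhok\norm{\Dxk'}+\gamk\norm{\uk'-F(\xk')}\bigr)\norm{x^\star-\xk'},
\qquad
\gamk\norm{\uk'-F(\xk')}
\leq
c'\bigl(\norm{\Dxk'}+\alpha\rhok'\rhok\norm{\Dxk''}\bigr).
\]
Likewise,
\[
\tfrac{\lambda}{2}(\gamk\rhok)^2\norm{\DFk'}^2=\tfrac{\lambda}{2}\rhok^2(\gamk\Lk')^2\norm{\Dxk'}^2\leq\tfrac{\lambda}{2}\rhomax^2c'^2\norm{\Dxk'}^2 .
\]
Since \(\rhok\leq\rhomax\) and the iterates are bounded, both terms vanish. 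This is exactly how the paper argues in \cref{thm:limit}. With this repair, your Opial argument is complete.
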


			As noticed earlier, when \(\alpha=2\) the coefficient of the first inner product in \eqref{eq:main} vanishes; in this case, the dependence on \(\tauk\) disappears, as is also evident from the expression of \(\betk\) in \cref{state:adaFRB+:betak}.
			In fact, it turns out that also the parameter \(\tilde\rho_k\) is unnecessary, leading to a considerably simpler update rule as presented in \cref{alg:adaFRB+2}.
			
			\begin{algorithm}[tb]
				\caption{\AdaFRB+_2 (special case of \refadaFRB+ with \(\alpha=2\))}
				\label{alg:adaFRB+2}%

					\begin{algorithmic}[1]
		\itemsep=5pt
		\item[{%
			Choose \(x^0\in\R^n\), \(\gamma_0>0\) and \(L_0>0\)%
		}]
		\item[{%
			Set \(\alpha=2\), \(x^{-1}=x^{-2}=x^0\) and \(\rho_0=1\)%
		}]
		\item[Repeat for \(k=0,1,\dots\)]
		\State \label{state:adaFRB+2:betak}%
			\(
				\betk
			=
				\sqrt{
					\tfrac{
						\sqrt{3}
					}{
						\sqrt{3}-1
						+
						[\cos\phi_k]_-
					}
				}
			\)
			~where~
			\(
				[\cos\phi_k]_-
			=
				\frac{
					\left[
						\innprod{F(\xk)-F(\xk')}{F(\xk')-F(\xk'')}
					\right]_-
				}{
					\norm{F(\xk)-F(\xk')}
					\norm{F(\xk')-F(\xk'')}
				}
			\)
		
		\State
			\(
				\gamk*
			=
				\min\left\{
					\gamk
					\sqrt{
						\tfrac{1}{\alpha}+\rhok
					}
				,\,
					\gamk
					\betk
				,\,
					\frac{\sqrt{6}}{10}
					\tfrac{1}{\Lk}
				\right\}
			\)
		\Comment
			\(
				\frac{\sqrt{6}}{10}
			\approx
				0.245
			\)
		
		\State
			\(\rhok*=\frac{\gamk*}{\gamk}\)
		
		\State
			\(
				\uk*
			=
				F(\xk)
				+
				\alpha\rhok*
				(F(\xk)-F(\xk'))
			\)
		
		\State
			\(
				\xk*
			=
				\prox_{\gamk* g}\bigl(
					\xk
					-
					\gamk*\uk*
				\bigr)
			\)
		\end{algorithmic}
			\end{algorithm}

	\section{Convergence analysis}

		All convergence results developed below originate from the general inequality established in \cref{thm:main:innprods}.
		The free parameter \(\thetk*\) in that inequality provides an additional degree of freedom, and different choices may lead to different algorithmic variants and convergence conditions.
		Since the main focus of this work is the adaptive family, we primarily consider the proportional choice \(\thetk*=\alpha\rhok*,\) where \(\alpha\in[1,2].\)
		This choice is motivated by two main considerations.
		First, it allows \(\rhok*\) to be factored out from two coefficients in \cref{thm:main:innprods}, thereby simplifying the subsequent estimates.
		In particular, when \(\alpha=2\), one of the remaining inner-product terms vanishes, leading to a further simplification.
		Second, this parameterization naturally gives rise to the stepsize-ratio condition \(\rhok*^2\leq\frac{1}{\alpha}+\rhok\), which is a recurrent structure in adaptive stepsize methods and plays a central role in the analysis of both \adaFRB{} and \adaFRB+.
		
		For completeness, the generalized FRB scheme under the global Lipschitz assumption is analyzed by taking the constant choice \(\thetk*=\alpha\) in the same general inequality.
		Other admissible choices of \(\thetk*\) may lead to further variants and are left for future investigation.

		\subsection{Establishing a Lyapunov function}

			Our unified convergence analysis, encompassing both the static and adaptive variants, revolves around the identification of a Lyapunov function for the iterates generated by the proposed algorithms.
			The inequality in \cref{thm:main:innprods} provides a natural starting point, as it already features squared norms and the gap function \(\Vk\).
			It remains to incorporate the two inner-product terms into the same quadratic structure.
			For the first term, we use the lossless decomposition \eqref{eq:tau-decomposition} induced by \(\tauk\).
			For the second, we apply \eqref{eq:Youngcos_pos} with parameter \(\frac{\mu(1+\alpha\rhok*)}{\alpha\rhok}\), which yields
			\begin{equation}\label{eq:cosphi-bound-analysis}
				-\innprod{\DFk}{\DFk'}
			\leq
				[\cos\phi_k]_-
				\left(
					\tfrac{\mu(1+\alpha\rhok*)}{2\alpha\rhok}
					\norm{\DFk}^2
					+
					\tfrac{\alpha\rhok}{2\mu(1+\alpha\rhok*)}
					\norm{\DFk'}^2
				\right),
			\end{equation}
			where \(\phi_k\) is the angle between \(\DFk\) and \(\DFk'\).
			Here, as explained in the preceding section, only the available directional factor \([\cos\phi_k]_-\) can be retained because the corresponding Young parameter depends on the unknown ratio \(\rhok*\).
			Substituting \eqref{eq:tau-decomposition} and \eqref{eq:cosphi-bound-analysis} into \cref{thm:main:innprods} and choosing \(\thetk*=\alpha\rhok*\), the following is obtained.
			
			\begin{theorem}[Lyapunov descent]\label{thm:Lyapunov}%
				Fix \(\varepsilon,\mu,\lambda>0\) and \(\alpha\in[1,2]\), and consider the iterates generated by \eqref{eq:alg-family} with \(\thetk*=\alpha\rhok*=\alpha\frac{\gamk*}{\gamk}\).
				For \(x\in\dom g\) and \(k\geq1\), let
				\[
					\Uk(x)
				\coloneqq
					\tfrac{1}{2}
					\norm{\xk-x}^2
					+
					\tfrac{1}{2}
					\norm{\Dxk}^2
					+
					\tfrac{\lambda(\gamk\rhok)^2}{2}
					\norm{\DFk'}^2
					+
					(1+\alpha\rhok)
					\gamk\Vk'(x)
				\]
				be as in \eqref{eq:Uk}.
				Then, for any \(k\geq1\) it holds that
				\begin{equation}\label{eq:descent}
					\Uk*(x)
				\leq
					\Uk(x)
					-
					\alpha\gamk A_{k+1}
					\Vk'(x)
					-
					(\alpha\rhok\gamk)^2
					B_{k+1}
					\norm{\DFk'}^2
					-
					C_{k+1}
					\norm{\Dxk}^2,
				\end{equation}
				where
				\begin{equation}\label{eq:ABC}
					\left\{
						\begin{array}{@{}r @{} l@{}}
							A_{k+1}
						={} &
							\tfrac{1}{\alpha}+\rhok-\rhok*^2
						\\
							B_{k+1}
						={} &
							\tfrac{\lambda}{2\alpha^2}
							-
							\rhok*^2
							\Bigl(
								1
								+
								\tauk
								\tfrac{2-\alpha}{2\alpha\varepsilon}
								+
								\tfrac{[\cos\phi_k]_-}{\mu}
							\Bigr)
						\\
							C_{k+1}
						={} &
							\tfrac{1}{2}
							-
							(\gamk*\Lk)^2
							\Bigl(
								\tfrac{\lambda}{2}
								\rhok*^2
								+
								(1+\alpha\rhok*)^2
								\bigl(
									1
									+
									\mu
									[\cos\phi_k]_-
								\bigr)
							\Bigr)
						\\
						&
							+
							\Bigl(
								\alpha
								-
								1
								+
								2\gamk\lk
								-
								\tauk\alpha\varepsilon
								\tfrac{2-\alpha}{2}
							\Bigr)
							\rhok*^2
						\end{array}
					\right.
				\end{equation}
				with
				\[
					\tauk
				\coloneqq
					\frac{
						\innprod{\Dxk}{\DFk'}
					}{
						\frac{\varepsilon}{2\gamk\rhok}
						\norm{\Dxk}^2
						+
						\frac{\rhok\gamk}{2\varepsilon}
						\norm{\DFk'}^2
					}
				\quad \text{and} \quad
					\cos\phi_k
				\coloneqq
					\frac{
						\innprod{\DFk}{\DFk'}
					}{
						\norm{\DFk}
						\norm{\DFk'}
					}
				\]
				both bounded between \(-1\) and \(1\).
			\end{theorem}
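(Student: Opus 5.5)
The plan is to start from the particular form \eqref{eq:main} of \cref{thm:main:innprods}, valid for \(\thetk*=\alpha\rhok*\), and to rewrite each term on its right-hand side using the quantities of \(\Uk\) and \(\Uk*\). After that, only coefficient bookkeeping remains.

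\textbf{Step 1: gap terms.} Add \(\tfrac{\lambda(\gamk*\rhok*)^2}{2}\norm{\DFk}^2\) to both sides of \eqref{eq:main}. The left-hand side then becomes exactly \(\Uk*(x)\). On the right-hand side, the gap terms are handled by the identity
\[
\alpha\rhok*\gamk*=\alpha\rhok*^2\gamk=(1+\alpha\rhok)\gamk-\alpha\gamk A_{k+1},
\]
with \(A_{k+1}=\tfrac1\alpha+\rhok-\rhok*^2\). So \(\alpha\rhok*\gamk*\Vk'(x)\) equals the \(\Vk'\)-term of \(\Uk\) minus \(\alpha\gamk A_{k+1}\Vk'(x)\). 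This produces the first correction in \eqref{eq:descent}.

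\textbf{Step 2: the two inner products.} For the first inner product, \(\innprod{\Dxk}{\DFk'}\), whose coefficient is \(\alpha\rhok\gamk\rhok*^2(2-\alpha)\geq0\), use the exact decomposition \eqref{eq:tau-decomposition} with \(\epsk=\varepsilon/\rhok\). Written with the scaling \(\tfrac{\varepsilon}{2\gamk\rhok}\norm{\Dxk}^2+\tfrac{\rhok\gamk}{2\varepsilon}\norm{\DFk'}^2\) as in the definition of \(\tauk\), it contributes:
\begin{itemize}
\item \(\tauk\alpha\varepsilon\tfrac{2-\alpha}{2}\rhok*^2\norm{\Dxk}^2\);
\item \(\tauk(\alpha\rhok\gamk)^2\rhok*^2\tfrac{2-\alpha}{2\alpha\varepsilon}\norm{\DFk'}^2\).
\end{itemize}
For the second inner product, \(-\innprod{\DFk}{\DFk'}\), whose coefficient is \(2\alpha\rhok(1+\alpha\rhok*)\gamk^2\rhok*^2\), apply \eqref{eq:cosphi-bound-analysis}. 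It contributes:
\begin{itemize}
\item \(\mu[\cos\phi_k]_-(1+\alpha\rhok*)^2\gamk^2\rhok*^2\norm{\DFk}^2\), which by \(\gamk\rhok*=\gamk*\) and \(\norm{\DFk}=\Lk\norm{\Dxk}\) equals \(\mu[\cos\phi_k]_-(1+\alpha\rhok*)^2(\gamk*\Lk)^2\norm{\Dxk}^2\);
\item \((\alpha\rhok\gamk)^2\rhok*^2\tfrac{[\cos\phi_k]_-}{\mu}\norm{\DFk'}^2\).
\end{itemize}

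\textbf{Step 3: collecting coefficients.} Use \(\rhok*^2\gamk^2\Lk^2=(\gamk*\Lk)^2\) throughout.
\begin{itemize}
\item The coefficient of \(\norm{\DFk'}^2\) gathers the \(\alpha^2\rhok^2\gamk^2\Lk'^2\rhok*^2\norm{\Dxk'}^2=(\alpha\rhok\gamk)^2\rhok*^2\norm{\DFk'}^2\) term of \eqref{eq:main} and the two Step 2 pieces. Subtracting this from the \(\tfrac{\lambda(\gamk\rhok)^2}{2}\norm{\DFk'}^2\) available in \(\Uk\) leaves \(-(\alpha\rhok\gamk)^2B_{k+1}\). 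Note the factor \(\tfrac{\lambda}{2\alpha^2}\) in \(B_{k+1}\), obtained by dividing \(\tfrac\lambda2(\gamk\rhok)^2\) by \((\alpha\rhok\gamk)^2\).
\item The coefficient of \(\norm{\Dxk}^2\) combines the bracket in \eqref{eq:main}, the \(\tfrac{\lambda}{2}(\gamk*\Lk)^2\) term added in Step 1, and the Step 2 pieces. Comparing with the \(\tfrac12\norm{\Dxk}^2\) in \(\Uk\) gives \(-C_{k+1}\). The \(\rhok*^2(1-\alpha-2(1+\alpha\rhok*)\gamk\lk)\) part deserves care: \(C_{k+1}\) as stated keeps only \(-2\gamk\lk\rhok*^2\) in a favorable sign. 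Since \(\lk\geq0\), the surplus \(-2\alpha\rhok*\gamk\lk\rhok*^2\) can simply be dropped as nonpositive.
\end{itemize}

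The main obstacle is this last bookkeeping: keeping the signs of the \(\lk\) terms consistent, and confirming that every \(\rhok*\) factor lines up so that each term rewrites in terms of \(\gamk*\Lk\). Beyond that, care is needed with the edge cases \(\Dxk=0\) or \(\DFk'=0\), where \(\tauk\) and \(\cos\phi_k\) use the \(0/0=0\) convention. There the inner products vanish and the identities hold trivially. The case \(k=1\) also needs checking: there \(\Dxk'=0\) by the initialization \(x^{-1}=x^0\), so \eqref{eq:main} extends to it.
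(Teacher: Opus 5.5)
Your proposal is correct and follows the paper's proof step for step. You specialize \cref{thm:main:innprods} to $\vartheta_{k+1}=\alpha\rho_{k+1}$, convert $\langle\Delta_x^k,\Delta_F^{k-1}\rangle$ exactly via the $\tau_k$ identity, bound $-\langle\Delta_F^k,\Delta_F^{k-1}\rangle$ with the cosine-restricted Young inequality, regroup into $\mathcal U^\alpha_k$ and $\mathcal U^\alpha_{k+1}$, and discard the nonpositive term $-2\alpha\gamma_k\ell_k\rho_{k+1}^3\norm{\Delta_x^k}^2$. One detail in Step 3: the term added in Step 1 contributes $\tfrac{\lambda}{2}\rho_{k+1}^2(\gamma_{k+1}L_k)^2$, not $\tfrac{\lambda}{2}(\gamma_{k+1}L_k)^2$, to the coefficient of $\norm{\Delta_x^k}^2$, and this is exactly the form that appears in $C_{k+1}$.
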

			\begin{proof}
				We start by writing the inequality in \cref{thm:main:innprods} specialized to the choice \(\thetk=\alpha\rhok\):
				\begin{align*}
				&
					\tfrac{1}{2}
					\norm{\xk*-x^\star}^2
					+
					\tfrac{1}{2}
					\norm{\Dxk*}^2
					+
					\gamk*(1+\alpha\rhok*)\Vk(x^\star)
				\\
				\leq{} &
					\tfrac{1}{2}
					\norm{\xk-x^\star}^2
					+
					\alpha\rhok*\gamk*\Vk'(x^\star)
				\\
				&
					+
					(\alpha\rhok\gamk*)^2
					\norm{\DFk'}^2
				\\
				&
					+
					\rhok*^2
					\Bigl[
						(1+\alpha\rhok*)^2\gamk^2\Lk^2
						-
						2(1+\alpha\rhok*)\gamk\lk
						+
						1
						-
						\alpha
					\Bigr]
					\norm{\Dxk}^2
				\\
				&
					+
					\alpha\rhok\gamk\rhok*^2(2-\alpha)
					\innprod{\Dxk}{\DFk'}
				\\
				&
					-
					2\alpha\rhok(1+\alpha\rhok*)\gamk*^2
					\innprod{\DFk}{\DFk'}.
				\end{align*}
				We next convert the first inner product into a sum of squares via ``Young's identity'' \eqref{eq:Youngtight} with parameter \(\frac{\varepsilon}{\gamk\rhok}\), namely
				\[
					\innprod{\Dxk}{\DFk'}
				=
					\tfrac{\varepsilon\tauk}{2\gamk\rhok}
					\norm{\Dxk}^2
					+
					\tfrac{\rhok\gamk\tauk}{2\varepsilon}
					\norm{\DFk'}^2
				\]
				with \(\tauk\) as in the statement, and the tightened Young inequality \eqref{eq:Youngcos_pos} with parameter \(\mu\tfrac{1+\alpha\rhok*}{\alpha\rhok}\) on the second inner product, namely
				\[
					-\innprod{\DFk}{\DFk'}
				\leq
					\tfrac{\mu(1+\alpha\rhok*)[\cos\phi_k]_-}{2\alpha\rhok}
					\norm{\DFk}^2
					+
					\tfrac{\alpha\rhok[\cos\phi_k]_-}{2\mu(1+\alpha\rhok*)}
					\norm{\DFk'}^2.
				\]
				This results in
				\begin{align*}
				&
					\tfrac{1}{2}
					\norm{\xk*-x^\star}^2
					+
					\tfrac{1}{2}
					\norm{\Dxk*}^2
					+
					\gamk*(1+\alpha\rhok*)\Vk(x^\star)
				\\
				\leq{} &
					\tfrac{1}{2}
					\norm{\xk-x^\star}^2
					+
					\alpha\rhok*\gamk*\Vk'(x^\star)
					+
					(\alpha\rhok\gamk*)^2
					\biggl\{
						1
						+
						\tauk
						\tfrac{2-\alpha}{2\alpha\varepsilon}
						+
						\tfrac{[\cos\phi_k]_-}{\mu}
					\biggr\}
					\norm{\DFk'}^2
				\\
				&
					+
					\rhok*^2
					\biggl\{
						(\gamk\Lk)^2
						\bigl(
							1
							+
							\mu
							[\cos\phi_k]_-
						\bigr)
						(1+\alpha\rhok*)^2
						-
						2\gamk\lk
						(1+\alpha\rhok*)
						+
						1
						-
						\alpha
						+
						\tauk\alpha\varepsilon
						\tfrac{2-\alpha}{2}
					\biggr\}
					\norm{\Dxk}^2.
				\end{align*}
				In terms of \(\Uk(x^\star)\) as in the statement it reads
				\begin{align*}
				&
					\Uk*(x^\star)-\Uk(x^\star)
					+
					\alpha\bigl(\tfrac{1}{\alpha}+\rhok-\rhok*^2\bigr)
					\gamk\Vk'(x^\star)
				\\
				\leq{} &
					-
					(\alpha\rhok\gamk)^2
					\Bigl\{
						\tfrac{\lambda}{2\alpha^2}
						-
						\rhok*^2
						\Bigl[
							1
							+
							\tfrac{(2-\alpha)\tauk}{2\alpha\rhok\epsk}
							+
							\tfrac{[\cos\phi_k]_-}{\mu}
						\Bigr]
					\Bigr\}
					\norm{\DFk'}^2
				\\
				&
					-
					\biggl\{
						\tfrac{1}{2}
						-
						\rhok*^2
						\biggl[
							\tfrac{\lambda(\gamk*\Lk)^2}{2}
							+
							(1+\alpha\rhok*)^2
							(\gamk\Lk)^2
							\bigl(
								1
								+
								\mu
								[\cos\phi_k]_-
							\bigr)
						\\
						&
						\qquad
						\qquad
							-
							2(1+\alpha\rhok*)\gamk\lk
							+
							1
							-
							\alpha
							+
							(2-\alpha)
							\tfrac{\tauk\alpha\rhok\epsk}{2}
						\biggr]
					\biggr\}
					\norm{\Dxk}^2
				\end{align*}
				After removing the negative term \(-2\alpha\gamk\lk\rhok*^3\norm{\Dxk}^2\), the claimed inequality is obtained.
			\end{proof}

		\subsection{A general convergence recipe}

			The preceding theorem isolates all stepsize-dependent requirements in the three coefficients \(A_{k+1}\), \(B_{k+1}\), and \(C_{k+1}\). 
			We now derive the convergence consequences that follow from their nonnegativity, independently of the particular stepsize rule used to enforce it.
			This will reduce the subsequent analysis of \adaFRB{} and \adaFRB+ to verifying that their respective stepsize updates satisfy these coefficient conditions, together with the additional stepsize conditions required for convergence.
			
			When the comparison point is a solution \(x^\star\in\zer T\), monotonicity of \(F\) and the optimality condition \(-F(x^\star)\in\partial g(x^\star)\) imply \(\Vk(x^\star)\geq0\).
			Consequently, \(\Uk(x^\star)\) is nonnegative, and \cref{eq:descent} becomes a genuine Lyapunov descent inequality whenever \(A_{k+1},B_{k+1},C_{k+1}\geq0\).
			To complement this descent analysis with a nonasymptotic ergodic estimate, one cannot directly average the resulting bounds, since the underlying gap quantity need not be convex in the iterate variable.
			We therefore use a restricted dual gap function \cite{facchinei2003finite,nesterov2007dual,malitsky2020golden}.
			For each fixed comparison point, the corresponding dual-gap expression is convex in the candidate point, allowing the accumulated estimate to be evaluated at a suitably weighted average of the iterates.
			Taking the supremum over comparison points in a bounded neighborhood then collects the resulting family of inequalities into a single optimality measure.
			
			\begin{definition}[restricted gap function]\label{def:gap}%
				The \emph{restricted gap function} for problem \eqref{eq:P} centered in \(x^\star\in\zer T\) and with radius \(\delta>0\) is \(\Gap_{x^\star,\delta}:\R^n\to\Rinf\) given by
				\[
					\Gap_{x^\star,\delta}(y)
				\coloneqq
					\sup_{x\in\dom g\cap B(x^\star;\delta)}
					\bigl\{
						\innprod{F(x)}{y-x}
						+
						g(y)
						-
						g(x)
					\bigr\}
				\]
			\end{definition}
			
			\begin{fact}
				The following hold for \(\Gap_{x^\star,\delta}\) as in \cref{def:gap} with \(x^\star\in\zer T\) and \(\delta>0\):
				\begin{enumerate}
				\item
					\(\Gap_{x^\star,\delta}\) is convex and real valued.
				\item \label{thm:Gap:solutions}%
					\(\Gap_{x^\star,\delta}(x)\geq0\) for all \(x\in\R^n\);
					for any \(x\in\ball{x^\star}{\delta}\), equality holds only if \(x\in\zer T\).
				\end{enumerate}
			\end{fact}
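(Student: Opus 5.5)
The plan is to treat the two assertions separately, working directly from \cref{def:gap}. Throughout, for \(y\in\R^n\) write
\[
	\psi_x(y)\coloneqq\innprod{F(x)}{y-x}+g(y)-g(x),
	\qquad x\in\dom g\cap\ball{x^\star}{\delta}.
\]
The index set is nonempty, since \(x^\star\in\dom g\).

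For convexity, each \(\psi_x\) is affine plus \(g\), hence convex and lsc in \(y\). A pointwise supremum of convex functions is convex, so \(\Gap_{x^\star,\delta}\) is convex.

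Real-valuedness splits into two bounds.
\begin{enumerate}
\item The lower bound \(\Gap_{x^\star,\delta}>-\infty\) is immediate from the index \(x=x^\star\).
\item For the upper bound when \(y\in\dom g\), I will minorize \(g\) by an affine function: since \(g\) is proper convex lsc, there exist \(a\in\R^n\) and \(b\in\R\) with \(g\geq\innprod{a}{\cdot}+b\). Then
\[
	\psi_x(y)\leq\norm{F(x)}\,\norm{y-x}+g(y)-\innprod{a}{x}-b,
\]
and the right-hand side is bounded over the compact ball. Here I use continuity of \(F\) (it is locally Lipschitz), which makes \(\norm{F(x)}\) bounded on the ball.
\end{enumerate}

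The main obstacle is \(y\notin\dom g\). In that case \(g(y)=+\infty\), so \(\Gap_{x^\star,\delta}(y)=+\infty\) and the function is not real-valued on all of \(\R^n\). I would handle this by stating that ``real valued'' is meant on \(\dom g\), i.e., the function is proper, with effective domain exactly \(\dom g\), and finite there. This appears to be the intended reading, since the codomain in \cref{def:gap} is \(\Rinf\).

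For the second assertion, nonnegativity on \(\dom g\) is the special case above:
\[
	\Gap_{x^\star,\delta}(y)\geq\psi_{x^\star}(y)=g(y)-g(x^\star)+\innprod{F(x^\star)}{y-x^\star}\geq0,
\]
where the last inequality is \eqref{eq:VI}. Off \(\dom g\) the value is \(+\infty\geq0\).

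For the equality case, let \(y\in\ball{x^\star}{\delta}\) with \(\Gap_{x^\star,\delta}(y)=0\). Then \(y\in\dom g\), and
\[
	g(x)-g(y)+\innprod{F(x)}{x-y}\geq0
	\qquad\forall x\in\dom g\cap\ball{x^\star}{\delta},
\]
which is a local Minty-type inequality. I will convert it to Stampacchia form by the standard Minty argument.
\begin{enumerate}
\item Fix any \(z\in\dom g\) and set \(x_t=y+t(z-y)\) for \(t\in(0,1]\). Since \(y\) lies in the open ball, \(x_t\) stays in the ball for small \(t\), and \(x_t\in\dom g\) by convexity of \(\dom g\). (If the ball is closed and \(y\) is on its boundary, I would use \(x_t=y+t(z-y)\) with \(z\) in \(\dom g\cap\ball{x^\star}{\delta}\) first and then extend by convexity; this is a minor point.)
\item Convexity gives \(g(x_t)\leq(1-t)g(y)+tg(z)\). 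Substituting into the inequality yields \(t\bigl[g(z)-g(y)\bigr]+t\innprod{F(x_t)}{z-y}\geq0\).
\item Divide by \(t\) and let \(t\downarrow0\). Continuity of \(F\) gives \(g(z)-g(y)+\innprod{F(y)}{z-y}\geq0\).
\end{enumerate}
This holds for all \(z\) near \(y\) in \(\dom g\), and hence for all \(z\in\dom g\). Indeed, a convex function \(\innprod{F(y)}{\cdot}+g\) that is locally minimized at \(y\) over the convex set \(\dom g\) is globally minimized there. The result is exactly \eqref{eq:VI} at \(y\), i.e., \(-F(y)\in\partial g(y)\), so \(y\in\zer T\).
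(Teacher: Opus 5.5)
Convexity, finiteness on $\dom g$ (via an affine minorant of $g$ and boundedness of $F$ on the ball), nonnegativity (via the index $x=x^\star$ and \eqref{eq:VI}), and your Minty linearization for points with $\norm{y-x^\star}<\delta$ are all correct. Your caveat is also right: $\Gap_{x^\star,\delta}\equiv+\infty$ off $\dom g$, so ``real valued'' can only mean finite on $\dom g$ (the paper states the Fact without proof).

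The gap is the equality case at points with $\norm{y-x^\star}=\delta$ when the ball is closed. Let $h\coloneqq g+\innprod{F(y)}{\cdot}$. Your parenthetical fix only yields $h(z)\geq h(y)$ for $z\in\dom g\cap B(x^\star;\delta)$. For $y$ on the sphere this set is not a neighborhood of $y$ in $\dom g$, so ``locally minimized over $\dom g$'' is not established. Convexity alone cannot upgrade minimality over a ball to global minimality; think of $h(z)=z$ on $[-1,1]$ at $z=-1$. This is not a side issue. In \cref{thm:ergodic} the Fact is applied to cluster points $\bar x$ that are only known to satisfy $\norm{\bar x-x^\star}\leq\delta$, with $\delta=\sup_k\norm{x^k-x^\star}$. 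Boundary points are therefore exactly the case that is needed.

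The missing ingredient is the hypothesis $x^\star\in\zer T$, which your equality argument never uses. Your Minty step stays valid on the closed ball, because $x_t$ remains in the convex set $\dom g\cap B(x^\star;\delta)$. Take $z=x^\star$ in the resulting restricted inequality and add \eqref{eq:VI} evaluated at $y$. Monotonicity then gives
\[
	\bigl[h(x^\star)-h(y)\bigr]
	+
	\bigl[g(y)-g(x^\star)+\innprod{F(x^\star)}{y-x^\star}\bigr]
=
	-\innprod{F(y)-F(x^\star)}{y-x^\star}
\leq
	0 .
\]
Both brackets are nonnegative, so both vanish, and $h(x^\star)=h(y)=\min_{\dom g\cap B(x^\star;\delta)}h$. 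Since $x^\star$ is the center, it is an interior point of the ball. Hence it is a local, and by convexity a global, minimizer of $h$ on $\R^n$. Therefore $y$ is a global minimizer as well, so $-F(y)\in\partial g(y)$ and $y\in\zer T$. This argument also covers interior points, so your case distinction becomes unnecessary, and with it your proof is complete.
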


			\begin{theorem}[convergence recipe]\label{thm:recipe}%
				Consider the iterates generated by \eqref{eq:alg-family} with \(\thetk*=\alpha\rhok*=\alpha\frac{\gamk*}{\gamk}\) for some \(\alpha\in[1,2]\).
				Suppose that \(\gamk*=\rhok*\gamk\) is selected such that
				\[
					A_{k+1},B_{k+1},C_{k+1}\geq0
				\quad
					\forall k\in\N,
				\]
				where  \(A_{k+1},B_{k+1},C_{k+1}\) are as in \cref{thm:Lyapunov} for some \(\varepsilon,\mu,\lambda>0\).
				Then, for any solution \(x^\star\) to \eqref{eq:P} the following hold:
				\begin{enumerate}
				\item \label{thm:descent}%
					\(0\leq\Uk*(x^\star)\leq\Uk(x^\star)\), \(k\in\N\), and in particular \(\seq{\xk}\) is bounded.
			
				\item \label{thm:Vkrate}%
					\(
						0
					\leq
						\min_{k\leq K}
						\Vk(x^\star)
					\leq
						\frac{\Uk_1(x^\star)}{\sum_{k=1}^{K+1}\gamk}
					\).
			
				\item \label{thm:ergodic}%
					If \(\sum_{k\in\N}\gamk=\infty\), then \(\liminf_{k\to\infty}\Vk(x^\star)=0\) and any cluster point of the (bounded) ergodic sequence
					\begin{gather}\label{eq:barxk}
						\bar x^K
					\coloneqq
						\frac{
							\sum_{k=2}^{K}\alpha\gamma_k(\frac{1}{\alpha}+\rhok-\rhok*^2)\,x^{k-1}
							+
							(1+\alpha\rho_{K+1})\gamma_{K+1}\,x^K
						}{
							\sum_{k=2}^{K}\alpha\gamma_k(\frac{1}{\alpha}+\rhok-\rhok*^2)
							+
							(1+\alpha\rho_{K+1})\gamma_{K+1}
						}
					\shortintertext{%
						solves \eqref{eq:P}.
						Moreover,
					}
					\label{eq:gap<=}
						\Gap_{x^\star,\delta}(\bar x^K)
					\leq
						\frac{
							\sup\set{
								\Uk_2(x)
							}[
								x\in\dom g\cap B(x^\star;\delta)
							]
						}{
							\sum_{k=1}^K\gamk*
						}
					\end{gather}
					holds for all \(K\geq1\) with \(\delta=\sup_{k\in\N}\norm{\xk-x^\star}\).
			
				\item \label{thm:limit}%
					Under the stronger condition that \(\inf_{k\in\N}\gamk>0\), then \(\Vk(x^\star)\to0\);
					if \(\liminf_{k\to\infty}C_k>0\) also holds, then \(\xk\) converges to a solution \(x^\infty\) with \(\Uk(x^\infty)\to0\).
				\end{enumerate}
			\end{theorem}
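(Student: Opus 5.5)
Everything rests on the descent inequality \eqref{eq:descent} of \cref{thm:Lyapunov}, with \(A_{k+1},B_{k+1},C_{k+1}\geq0\) assumed. For \cref{thm:descent}, take \(x=x^\star\). Then \(\Vk'(x^\star)\geq0\), so every subtracted term in \eqref{eq:descent} is nonpositive and \(\Uk*(x^\star)\leq\Uk(x^\star)\). Each summand of \(\Uk(x^\star)\) is nonnegative, so \(\Uk(x^\star)\geq0\). Boundedness of \(\seq{\xk}\) then follows from \(\tfrac12\norm{\xk-x^\star}^2\leq\Uk_1(x^\star)\).

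For \cref{thm:Vkrate}, I would not telescope the \(A\)-terms alone, since \(A_{k+1}\) may be zero. Instead I use the gap term stored inside \(\Uk\). Rewrite \eqref{eq:descent} as
\[
U_{k+1}^{\alpha}(x^\star) - (1+\alpha\rho_{k+1})\gamma_{k+1}V_k(x^\star) + \alpha\gamma_k A_{k+1}V_{k-1}(x^\star) \leq U_k^{\alpha}(x^\star).
\]
The key identity is
\[
\alpha\gamma_kA_{k+1}=\gamma_k+\alpha\gamma_{k+1}-\alpha\rho_{k+1}\gamma_{k+1}.
\]
Summing from \(k=1\) to \(K\), the \(V\)-coefficients combine. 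Index \(j=k-1\) collects \(\gamma_j+\alpha\rho_j\gamma_j\) from the previous \(U\), and \(\alpha\gamma_{j+1}A_{j+2}\) from the current step. I expect the total weight on each \(V_j\) to be at least \(\gamma_{j+1}\). The plan is to verify this telescoping bookkeeping carefully. Simpler still, I would sum \(\alpha\gamma_kA_{k+1}\Vk'\) together with the final term \((1+\alpha\rho_{K+1})\gamma_{K+1}V_K\) retained from \(U_{K+1}\). The identity above shows these weights sum to \(\sum_{k=1}^{K+1}\gamk\) plus nonnegative slack. Hence \(\min_{k\leq K}\Vk(x^\star)\cdot\sum_{k=1}^{K+1}\gamk\leq\Uk_1(x^\star)\).

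For \cref{thm:ergodic}, run the same summation with a general \(x\in\dom g\cap B(x^\star;\delta)\) in place of \(x^\star\), starting from \(k=2\). There \(\Vk(x)\) can be negative, but \eqref{eq:descent} holds for any \(x\). Dropping the \(B\) and \(C\) terms and the nonnegative norm parts of \(U_{K+1}\) gives
\[
\textstyle\sum_{k=2}^K\alpha\gamk A_{k+1}\Vk'(x)+(1+\alpha\rho_{K+1})\gamma_{K+1}V_K(x)\leq\Uk_2(x).
\]
By monotonicity, \(\Vk(x)\geq g(\xk)-g(x)+\innprod{F(x)}{\xk-x}\), and the right-hand side is convex in \(\xk\). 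Jensen with the weights of \eqref{eq:barxk} then bounds \(g(\bar x^K)-g(x)+\innprod{F(x)}{\bar x^K-x}\) by \(\Uk_2(x)/S_K\). Here \(S_K\) is the total weight, which by the same identity is at least \(\sum_{k=1}^K\gamk*\). Taking the supremum over \(x\) yields \eqref{eq:gap<=}.

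The choice \(\delta=\sup_k\norm{\xk-x^\star}\) keeps \(\bar x^K\) in the closed ball. The numerator is finite because \(\Uk_2\) is bounded on the bounded set, with \(g\) bounded below by an affine minorant. If \(\sum\gamk=\infty\), then \(\Gap_{x^\star,\delta}(\bar x^K)\to0\). Lower semicontinuity of the convex real-valued gap, together with the Fact, shows that cluster points in the ball are zeros of \(T\). The claim \(\liminf\Vk=0\) follows from \cref{thm:Vkrate}.

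For \cref{thm:limit}, \(\inf\gamk>0\) alone is not enough to give \(\Vk\to0\) from summability, since the \(A\)-weights may vanish. This is the main obstacle. My first attempt is to bound \(V_K(x^\star)\) directly from \(U_{K+1}(x^\star)\leq U_1(x^\star)\), but that only gives boundedness. So I would instead use \(\liminf C_k>0\) to get \(\sum\norm{\Dxk}^2<\infty\) and hence \(\Dxk\to0\). Then \(\DFk\to0\), by local Lipschitz continuity on the bounded iterate set. Then \(\nabla*g(\xk)+F(\xk)\to0\) by \eqref{eq:tildeg}, using \(\gamk\) bounded below. From \(\Vk(x^\star)\leq\innprod{\nabla*g(\xk)+F(\xk)}{\xk-x^\star}\) and boundedness, \(\Vk\to0\).

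I would expect this argument to be needed also for the first claim, and would check whether it can be avoided. Otherwise, the \(A_{k+1}\Vk'\) terms being summable, combined with some regularity of \(\Vk\), might suffice.

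For sequence convergence, Opial's argument applies. Cluster points are solutions by outer semicontinuity of \(T\), and \(\Uk(x^\star)\) converges for every solution. Because \(\Dxk\to0\), \(\DFk\to0\) and \(\Vk\to0\), the limit of \(\Uk(x^\star)\) equals \(\lim\tfrac12\norm{\xk-x^\star}^2\). The standard Opial argument then gives \(\xk\to x^\infty\) and \(\Uk(x^\infty)\to0\).
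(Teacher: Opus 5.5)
Your items 1--3 follow the paper: you sum \eqref{eq:descent} while keeping the gap term stored in $\mathcal{U}^\alpha_{K+1}$, and you apply Jensen to $y\mapsto g(y)-g(x)+\langle F(x),y-x\rangle$. The last step of item 4, however, has a genuine gap.

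$\mathcal{U}^\alpha_k(x^\star)$ contains $\tfrac{\lambda}{2}(\gamma_k\rho_k)^2\lVert F(x^{k-1})-F(x^{k-2})\rVert^2$ and $(1+\alpha\rho_k)\gamma_kV_{k-1}(x^\star)$, and both are weighted by the stepsize. Nothing bounds $\gamma_k$ from above. The condition $A_{k+1}\ge0$ only bounds $\rho_k$, and $C_{k+1}\ge0$ only bounds $\gamma_{k+1}L_k$. So if $L_k\to0$ (e.g.\ $F\equiv0$), the stepsizes may grow geometrically while all hypotheses of item 4 hold. Consequently, $F(x^k)-F(x^{k-1})\to0$ and $V_k(x^\star)\to0$ do not give $\lim_k\mathcal{U}^\alpha_k(x^\star)=\lim_k\tfrac12\lVert x^k-x^\star\rVert^2$. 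Neither your Opial argument, which needs $\lim_k\lVert x^k-x^\star\rVert$ to exist for every solution, nor $\mathcal{U}^\alpha_k(x^\infty)\to0$ follows as written.

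The paper closes this with two facts:
\begin{enumerate}
\item[(a)] $\gamma_k\lVert F(x^{k-1})-F(x^{k-2})\rVert=(\gamma_kL_{k-1})\lVert x^{k-1}-x^{k-2}\rVert\to0$. This uses boundedness of $\gamma_kL_{k-1}$, which is enforced by the safeguard $\gamma_k\le c/L_{k-1}$ and is also implied by $C_k\ge0$ together with bounded $\rho_k$.
\item[(b)] $\gamma_kV_{k-1}(x^\star)\to0$. This comes from \emph{multiplying} your subgradient bound at index $k-1$ by $\gamma_k=\rho_k\gamma_{k-1}$, rather than dividing by the stepsize (this is \cref{thm:descent-star}):
\[
\gamma_kV_{k-1}(x^\star)\le\rho_k\langle x^{k-2}-x^{k-1},x^{k-1}-x^\star\rangle+\gamma_k\lVert F(x^{k-1})-u^{k-1}\rVert\,\lVert x^{k-1}-x^\star\rVert .
\]
The first term vanishes because $x^k-x^{k-1}\to0$ and the iterates are bounded. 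For the second, $\gamma_k\lVert F(x^{k-1})-u^{k-1}\rVert\le\gamma_kL_{k-1}\lVert x^{k-1}-x^{k-2}\rVert+\alpha\rho_{k-1}\rho_k\gamma_{k-1}L_{k-2}\lVert x^{k-2}-x^{k-3}\rVert\to0$.
\end{enumerate}
With (a) and (b), your Opial route goes through. So does the paper's shorter route: pick an optimal cluster point $x^\infty$, use that $\mathcal{U}^\alpha_k(x^\infty)$ is nonincreasing, and evaluate its limit along the subsequence converging to $x^\infty$.

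Two smaller points. First, your ``key identity'' is wrong as stated. Since $\gamma_k\rho_{k+1}^2=\rho_{k+1}\gamma_{k+1}$, the correct form is $\alpha\gamma_kA_{k+1}=\gamma_k+\alpha\rho_k\gamma_k-\alpha\rho_{k+1}\gamma_{k+1}$. You wrote $\alpha\gamma_{k+1}$ in place of $\alpha\rho_k\gamma_k$, and your version does not telescope. With the correct form, the total weight in item 2 is exactly $\sum_{k=1}^{K+1}\gamma_k+\alpha\rho_1\gamma_1$, and in item 3 it is at least $\sum_{k=2}^{K+1}\gamma_k$, as in the paper. Second, using $\liminf_kC_k>0$ already for $V_k(x^\star)\to0$ is not a deviation from the paper: its proof of item 4 likewise starts from $\inf_kC_k>0$ after discarding early iterates. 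Your derivation of $V_k(x^\star)\to0$ via $V_k(x^\star)\le\langle (x^{k-1}-x^k)/\gamma_k-u^k+F(x^k),x^k-x^\star\rangle$ is fine, once you record that $A_{k+1}\ge0$ keeps $\rho_k$ bounded.
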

			\begin{proof}
				In what follows, let \(x^\star\) be any solution to \eqref{eq:P}.
				\begin{itemize}[leftmargin=*, align=right]
				\item ``\ref{thm:descent}''
					As discussed after \eqref{eq:Vk}, \(\Vk(x^\star)\geq0\) holds for any \(k\); in particular, it follows from its definition \eqref{eq:Uk} that \(\Uk(x^\star)\geq0\).
					Moreover, the inequality in \eqref{eq:descent} together with the nonnegativity of \(A_{k+1},B_{k+1},C_{k+1}\) implies that \(\Uk*(x^\star)\leq\Uk(x^\star)\), and since \(\frac{1}{2}\norm{\xk-x^\star}^2\leq\Uk(x^\star)\leq\cdots\leq\Uk_1(x^\star)\), boundedness of the sequence also follows.
			
				\item ``\ref{thm:Vkrate}''
					The inequality \eqref{eq:descent} yields that
					\begin{align*}
						\sum_{k=1}^K
						\gamk
						\bigl(1+\alpha\rhok-\alpha\rhok*^2\bigr)
						\Vk'(x)
					\leq{} &
						\sum_{k=1}^K
						\bigl[
							\Uk(x)
							-
							\Uk*(x)
						\bigr]
					\\
					={} &
						\Uk_1(x)-\Uk_{K+1}(x)
					\\
					\leq{} &
						\Uk_1(x)
						-
						(1+\alpha\rho_{K+1})
						\gamma_{K+1}\Vk_K(x)
					\numberthis\label{eq:Uk:sum:rate}
					\end{align*}
					holds for any \(x\in\dom g\).
					If \(x=x^\star\) is a solution, then \(\Vk(x^\star)\geq0\) and denoting \(\Vk_K^{\rm min}(x^\star)\coloneqq\min_{k\leq K}\Vk(x^\star)\) the best-so-far value, we have
					\begin{align*}
						\Uk_1(x^\star)
					\geq{} &
						\textstyle
						\Vk_K^{\rm min}(x^\star)
						\Bigl[
							(1+\alpha\rho_{K+1})
							\gamma_{K+1}
							+
							\sum_{k=1}^K
							\gamk
							\bigl(1+\alpha\rhok-\alpha\rhok*^2\bigr)
						\Bigr]
					\\
					={} &
						\textstyle
						\Vk_K^{\rm min}(x^\star)
						\Bigl[
							\sum_{k=1}^{K+1}\gamk
							+
							\alpha\rho_{K+1}
							\gamma_{K+1}
							+
							\alpha
							\sum_{k=1}^K
							\bigl(\gamk\rhok-\gamk*\rhok*\bigr)
						\Bigr]
					\\
					={} &
						\textstyle
						\Vk_K^{\rm min}(x^\star)
						\Bigl[
							\sum_{k=1}^{K+1}\gamk
							+
							\alpha\gamma_1\rho_1
						\Bigr]
					\geq
						\Vk_K^{\rm min}(x^\star)
						\sum_{k=1}^{K+1}\gamk,
					\end{align*}
					which proves the claim.
			
				\item ``\ref{thm:ergodic}''
					That \(\liminf_{k\to\infty}\Vk(x^\star)=0\) is clear from the previous assertion.
					Next, observe that \(\delta<\infty\) owing to assertion \cref{thm:descent}, and that the condition \(\ak*\geq0\) implies that \(\bar x^k\) is a convex combination of \(x^1,\dots,\xk\);
					since each iterate \(\xk\) with \(k\geq1\) is the output of a proximal mapping of \(g\) and thus belongs to \(\dom g\), convexity of \(\dom g\) implies that \(\bar x^k\in\dom g\) too.
					For conciseness, let us denote by
					\begin{align}
					\nonumber
						D_K
					\coloneqq{} &
						\textstyle
						\gamma_{K+1}(1+\alpha\rho_{K+1})
						+
						\sum_{k=2}^{K}
						\alpha \gamk
						\left(%
							\tfrac1{\alpha} + \rhok - \rhok*^2
						\right)
						\nonumber
					\\
					={} &
						\textstyle
						\gamma_{K+1}(1+\alpha\rho_{K+1})
						+
						\sum_{k=2}^{K}
						\gamma_k
						+
						\alpha
						\bigl(
							\gamma_1\rho_1
							-
							\gamma_{K+1}\rho_{K+1}
						\bigr)
					\geq
						\sum_{k=2}^{K+1}
						\gamk
					\label{eq:DK}
					\end{align}
					the normalizing sum in the definition of \(\bar x^K\).
					Furthermore, let us define
					\[
						G(x,y)
					\coloneqq
						\innprod{F(x)}{y-x}
						+
						g(y)
						-
						g(x),
					\]
					so that from \eqref{eq:Vk} and the definition of \(\Gap_{x^\star,\delta}\) we have that
					\[
						\Vk(x)
					\geq
						G(x,x^k)
					~~
						\forall x\in\dom g
					\quad\text{and}\quad
						\Gap_{x^\star,\delta}(y)
					=
						\sup_{x\in\dom g\cap B(x^\star;\delta)}
						G(x,y).
					\]
					Since all coefficients \(1+\alpha\rho_{K+1}\) and \(1+\alpha\rhok-\alpha\rhok*^2\) in \eqref{eq:Uk:sum:rate} are positive, the inequality \(\Vk(x)\geq G(x,x^k)\) yields that
					\[
						\gamma_{K+1}
						\tfrac{1+\alpha\rho_{K+1}}{D_K}
						G(x,x^K)
						+
						\sum_{k=2}^K
						\gamk
						\tfrac{1+\alpha\rhok-\alpha\rhok*^2}{D_K}
						G(x,\xk')
					\leq
						\tfrac{\Uk_2(x)}{D_K}
					\quad
						\forall x\in\dom g.
					\]
					(Here, compared to \eqref{eq:Uk:sum:rate}, we have telescoped the sum starting from \(k=2\) to exclude \(x^0\) from the convex combination, as it is not guaranteed to be in \(\dom g\).)
					Note that the left-hand side is a convex combination of \(G(x,x^1),\dots,G(x,x^K)\).
					Hence, convexity of \(y\mapsto G(x,y)\) and Jensen's inequality give that
					\[
						G(x,\bar x^K)
					\leq
						\tfrac{\Uk_2(x)}{D_K}
					\quad
						\forall x\in\dom g.
					\]
					By taking the supremum over \(x\in\dom g\) with \(\norm{x-x^\star}\leq\norm{x^0-x^\star}\) and using \eqref{eq:DK}, inequality \eqref{eq:gap<=} follows.
					Since any cluster point \(\bar x\) of \(\seq{\bar x^k}\) satisfies \(\norm{\bar x-x^\star}\leq\delta\), optimality of \(\bar x\) follows from \cref{thm:Gap:solutions}.
			
				\item ``\ref{thm:limit}''
					Up to discarding early iterates, \(\inf_{k\in\N}C_k=C>0\).
					Then, telescoping \eqref{eq:descent} yields that \(\xk-\xk'\to0\).
					Let \(L_{F,\V}\) denote a Lipschitz constant of \(F\) on a convex and compact set \(\V\) that contains all the iterates \(\xk\).
					Then, \(\Lk\leq L_{F,\V}\) holds for every \(k\).
					In particular, with \(\nabla*g(\xk)\) as in \eqref{eq:tildeg} we have that \(F(\xk)+\nabla*g(\xk)\in T(\xk)\) and
					\begin{align*}
						\dist(0,T(\xk))
					\leq{} &
						\norm{F(\xk)+\nabla*g(\xk)}
					\\
					\leq{} &
						\bigl(
							\tfrac{1}{\gamk}
							+
							\Lk
						\bigr)
						\norm{\Dxk}
						+
						\alpha\rhok\Lk'
						\norm{\Dxk'}
					\\
					\leq{} &
						\tfrac{1+\gammin L_{F,\V}}{\gammin}
						\norm{\Dxk}
						+
						\alpha\rhok L_{F,\V}
						\norm{\Dxk'}
					\to
						0.
					\end{align*}
					In particular, since \(T\) is outer semicontinuous, every limit point of \(\seq{\xk}\) solves \eqref{eq:P}.
					Boundedness of \(\seq{\xk}\) then implies that an optimal limit point exists, be it \(x^\infty\).
			
					Since \(A_{k+1}\geq0\) for all \(k\), \(\seq{\rhok}\) must be bounded; moreover, the stepsize rule enforces \(\gamk*\leq\tfrac{c}{\Lk}\), so that \(\seq{\gamk*\Lk}\) is bounded by \(c\).
					Appealing to \cref{thm:descent-star}, and since \(\norm{\Dxk}\to0\), we have that
					\begin{align*}
						\lim_{K\ni k\to\infty}
						\gamk\Vk'(x^\star)
					={} &
						\lim_{K\ni k\to\infty}
						\rhok\gamk'\Vk'(x^\star)
					\\
					\leq{} &
						\lim_{K\ni k\to\infty}
						\gamk
						\norm{\uk'-F(\xk')}
						\smash{
							\overbracket[0.5pt]{
								\norm{x^\star-\xk'}
							}^{\text{\clap{bounded}}}
						}.
					\end{align*}
					Moreover,
					\begin{align*}
						\gamk\norm{\uk'-F(\xk')}
					\leq{} &
						\gamk
						\norm{\DFk'}
						+
						\alpha\rhok'
						\gamk
						\norm{\DFk''}
					\\
					={} &
						\underbracket[0.5pt]{
							\gamk\Lk'
							\vphantom{\norm{\xk}}
						}_{\text{\clap{bounded}}}
						\,
						\underbracket[0.5pt]{
							\norm{\Dxk'}
						}_{\to0}
						+
						\underbracket[0.5pt]{
							\alpha\rhok'\rhok
							\gamk'\Lk''
							\vphantom{\norm{\xk}}
						}_{\text{\clap{bounded}}}
						\,
						\underbracket[0.5pt]{
							\norm{\Dxk''}
						}_{\to0}
					\end{align*}
					vanishes as \(k\to\infty\), and thus so does \(\gamk\Vk'(x^\star)\).
					Let us now consider a subsequence \(\seq{\xk}[k\in K]\) converging to the optimal limit point \(x^\infty\).
					Since the whole sequence \(\seq{\Uk(x^\infty)}\) converges, we have
					\begin{align*}
						\lim_{k\to\infty}\Uk(x^\infty)
					={} &
						\lim_{K\ni k\to\infty}\Uk(x^\infty)
					\\
					={} &
						\lim_{K\ni k\to\infty}\biggl(
							\tfrac{1}{2}
							\norm{\xk-x^\infty}^2
							+
							\tfrac{1}{2}
							\smash{
								\overbracket[0.5pt]{
									\norm{\Dxk}^2
								}^{\to0}
							}
						\\
						&
							\hphantom{
								\lim_{K\ni k\to\infty}\biggl({}
							}
							+
							\underbracket[0.5pt]{
								\tfrac{\lambda(\rhok\gamk\Lk')^2}{2}
							}_{\text{\clap{bounded}}}
							\,
							\underbracket[0.5pt]{
								\norm{\Dxk'}^2
								\vphantom{\tfrac{\lambda(\rhok\gamk\Lk')^2}{2}}
							}_{\to0}
							+
							\underbracket[0.5pt]{
								(1+\alpha\rhok)
								\gamk\Vk'(x)
								\vphantom{\tfrac{\lambda(\rhok\gamk\Lk')^2}{2}}
							}_{\to0}
						\biggr)
					={}
						0.
					\end{align*}
					Since \(\frac{1}{2}\norm{\xk-x^\infty}^2\leq\Uk(x^\infty)\), we conclude that the entire sequence \(\xk\) converges to \(x^\infty\).
				\qedhere
				\end{itemize}
			\end{proof}

		\subsection{Proofs of the main results}

			In this subsection we detail the proofs of the main convergence results of the proposed algorithms.
			In all cases, we will first identify \(\varepsilon,\lambda,\mu>0\) such that the coefficients \(A_{k+1},B_{k+1},C_{k+1}\) in \eqref{eq:ABC} are positive, so as to invoke the general convergence recipe of \cref{thm:recipe}.
			To this end, we remind that \(\abs{\tauk}\leq1\), \([\cos\phi_k]_-\leq1\), and \(\lk\geq0\);
			therefore, for any \(\varepsilon,\lambda,\mu>0\) one has that
			\begin{equation}\label{eq:ABCwc}
				\left\{
					\begin{array}{@{}r @{} l@{}}
						A_{k+1}
					={} &
						\tfrac{1}{\alpha}+\rhok-\rhok*^2
					\\
						B_{k+1}
					\geq{} &
						\tfrac{\lambda}{2\alpha^2}
						-
						\rhok*^2
						\Bigl(
							1
							+
							\tfrac{\abs{2-\alpha}}{2\alpha\varepsilon}
							+
							\tfrac{1}{\mu}
						\Bigr)
					\\
						C_{k+1}
					\geq{} &
						\tfrac{1}{2}
						-
						(\gamk*\Lk)^2
						\Bigl(
							\tfrac{\lambda}{2}
							\rhok*^2
							+
							(1+\mu)
							(1+\alpha\rhok*)^2
						\Bigr)
						+
						\Bigl(
							\alpha
							-
							1
							-
							\alpha\varepsilon
							\tfrac{\abs{2-\alpha}}{2}
						\Bigr)
						\rhok*^2.
					\end{array}
				\right.
			\end{equation}

			\subsubsection[Proof of Theorem \ref*{thm:constant-stepsize}]{Proof of \cref{thm:constant-stepsize}}\label{sec:proof_constant}%

				Employing a constant stepsize \(\gamk\equiv\gamma>0\) implies that \(\rhok=\frac{\gamk}{\gamk'}\equiv1\).
				Combined with the bound \(\Lk\leq L_F\), it follows from \eqref{eq:ABCwc} that
				\[
					\left\{
						\begin{array}{@{}r @{} l@{}}
							A_{k+1}
						={} &
							\tfrac{1}{\alpha}
						\\
							B_{k+1}
						\geq{} &
							\tfrac{\lambda}{2\alpha^2}
							-
							1
							-
							\tfrac{\abs{2-\alpha}}{2\alpha\varepsilon}
							-
							\tfrac{1}{\mu}
						\\
							C_{k+1}
						\geq{} &
							\alpha
							-
							\tfrac{1}{2}
							-
							\alpha\varepsilon
							\tfrac{\abs{2-\alpha}}{2}
							-
							(\gamma L_F)^2
							\bigl(
								\tfrac{\lambda}{2}
								+
								(1+\mu)
								(1+\alpha)^2
							\bigr).
						\end{array}
					\right.
				\]
				The particular choice
				\[
					\lambda
				=
					2\alpha^2
					\Bigl(
						1
						+
						\tfrac{\abs{2-\alpha}}{2\alpha\varepsilon}
						+
						\tfrac{1}{\mu}
					\Bigr),
				\quad
					\mu
				=
					\tfrac\alpha{1+\alpha},
				\quad\text{and}\quad
					\varepsilon=c(\alpha)
				\]
				results in
				\[
					A_{k+1}
				=
					\tfrac{1}{\alpha},
				\quad
					B_{k+1}
				\geq
					0,
				\quad\text{and}\quad
					C_{k+1}
				\geq
					C
					\bigl(c(\alpha)^2-(\gamma L_F)^2\bigr),
				\]
				where
				\(
					C
				=
					\alpha\abs{2-\alpha}
					\tfrac{1}{2c(\alpha)}
					+
					(1+2\alpha)^2
				\)
				as in the statement.
				Thus, \(\inf_{k\in\N}C_{k+1}>0\) as long as the stepsize satisfies \(\gamma L_F<c(\alpha)\);
				we remark that the parameters \(\varepsilon,\lambda,\mu\) have been chosen so as to optimize such admissible range of \(\gamma\).
				All the claims of \cref{thm:constant-stepsize} then follow from \cref{thm:recipe}.
				
				\begin{remark}
					As already acknowledged, the bound on \(\gamma\) obtained here in the constant-stepsize regime is not tight.
					This is a consequence of deriving it as a special case of our more general adaptive analysis, which does not rely on global Lipschitz continuity of \(F\).
					In particular, specializing the latter to the globally Lipschitz setting necessarily entails some loss in the admissible stepsize range.
					Within the scope of our current theory, we can nevertheless guarantee convergence also in the limiting case \(\gamma=\nicefrac{c(\alpha)}{L_F}\), albeit only for the ergodic sequence
					\(
						\bar x^k
					=
						\tfrac{1}{k+\alpha}
						\left(
							(1+\alpha)\xk
							+
							\sum_{j=1}^{k-1}x^j
						\right)
					\)
					as in \cref{thm:ergodic}.
				\end{remark}

			\subsubsection[Proof of Theorem \ref*{thm:general}]{Proof of \cref{thm:general}}\label{thm:proof_general}%

				Throughout this proof \(\alpha\in[1,2]\) is fixed, so that the absolute values in \eqref{eq:ABCwc} can be dropped, and \(c=\frac{1}{7-\alpha}\) denotes the safety coefficient set at the initialization of \refadaFRB.
				
				The first term in the minimum at \cref{state:adaFRB:rhok*} enforces \(\rhok*\leq\sqrt{\frac{1}{\alpha}+\rhok}\), which is precisely \(A_{k+1}\geq0\).
				Since \(\rho_0=1\) and \(t\mapsto\sqrt{\frac{1}{\alpha}+t}\) is increasing with fixed point \(\rhomax\coloneqq\frac{1}{2}\bigl(1+\sqrt{1+\frac{4}{\alpha}}\bigr)\), the same bound recursively guarantees that \(\rhok*\leq\rhomax\) for every \(k\in\N\).
				Combined with the second term, which enforces \(\rhok*\leq\frac{2}{3}+\frac{2\alpha}{5}\), the effective bound on the stepsize ratio is thus
				\begin{equation}\label{eq:adaFRB:beta}
					\rhok*
				\leq
					\beta
				\coloneqq
					\min\set{
						\tfrac{2}{3}+\tfrac{2\alpha}{5}
					,\,
						\rhomax
					}
				\qquad
					\forall k\in\N,
				\end{equation}
				while the third one enforces \(\gamk*\Lk\leq c\).
				
				For the remaining two coefficients we take
				\begin{equation}\label{eq:adaFRB:coeff}
					\mu
				=
					\tfrac{\alpha\beta^2}{1+\alpha\beta},
				\quad
					\varepsilon = \beta\bar c,
				\quad\text{and}\quad
					\lambda
				=
					2(\alpha\beta)^2
					\Bigl(
						1
						+
						\tfrac{2-\alpha}{2\alpha\varepsilon}
						+
						\tfrac{1}{\mu}
					\Bigr),
				\end{equation}
				where \(\bar c>0\) is the value for which
				\begin{equation}\label{eq:adaFRB:c1}
					\bar c^2
					\Bigl(
						\tfrac{\lambda}{2}\beta^2
						+
						(1+\mu)(1+\alpha\beta)^2
					\Bigr)
				=
					\tfrac{1}{2}
					\Bigl(
						1+\beta^2\bigl(2\alpha-2-\alpha\varepsilon(2-\alpha)\bigr)
					\Bigr),
				\end{equation}
				an equation in which both sides depend on \(\bar c\), the left one through \(\lambda\) and the right one through \(\varepsilon\), both being determined by \(\varepsilon=\beta\bar c\).
				Since \(\lambda\) is affine in \(\nicefrac1{\bar c}\) with nonnegative coefficients, the left-hand side is a quadratic in \(\bar c\) with nonnegative coefficients and no constant term, hence vanishing at the origin and strictly increasing on \((0,\infty)\); the right-hand side is instead affine in \(\bar c\) with nonpositive slope and value \(\frac12\bigl(1+2\beta^2(\alpha-1)\bigr)>0\) at the origin, owing to \(\alpha\in[1,2]\).
				Equation \eqref{eq:adaFRB:c1} thus admits exactly one positive solution, namely
				\begin{equation}\label{eq:adaFRB:c}
					\bar c
				=
					\tfrac{
						1+2\beta^2(\alpha-1)
					}{
						\alpha(2-\alpha)\beta^3
						+
						\sqrt{
							\left(\alpha(2-\alpha)\beta^3\right)^2
							+
							2
							\left(
								1+2\beta^2(\alpha-1)
							\right)
							\left(
								1+\alpha\beta+\alpha\beta^2
							\right)^2
						}
					}.
				\end{equation}
				With these values, the bound \(\rhok*\leq\beta\) applied to \eqref{eq:ABCwc} gives
				\[
					B_{k+1}
				\geq
					\tfrac{\lambda}{2\alpha^2}
					-
					\beta^2
					\Bigl(
						1
						+
						\tfrac{2-\alpha}{2\alpha\varepsilon}
						+
						\tfrac{1}{\mu}
					\Bigr)
				=
					0,
				\]
				the equality being the definition of \(\lambda\) in \eqref{eq:adaFRB:coeff}.
				As to \(C_{k+1}\), its lower bound in \eqref{eq:ABCwc} is a quadratic function of \(\rhok*\) whose derivative
				\[
					\rhok*
					\bigl(
						2\alpha-2-\alpha\varepsilon(2-\alpha)
						-
						c^2\lambda
						-
						2c^2\alpha^2(1+\mu)
					\bigr)
					-
					2c^2\alpha(1+\mu)
				\]
				is negative for every \(\rhok*\in[0,\beta]\); its minimum over the admissible range is therefore attained at \(\rhok*=\beta\).
				Together with \(\gamk*\Lk\leq c\) this yields
				\[
					C_{k+1}
				\geq
					\tfrac{1}{2}
					\Bigl(
						1+\beta^2\bigl(2\alpha-2-\alpha\varepsilon(2-\alpha)\bigr)
					\Bigr)
					-
					c^2
					\Bigl(
						\tfrac{\lambda}{2}
						\beta^2
						+
						(1+\mu)
						(1+\alpha\beta)^2
					\Bigr),
				\]
				where the terms not involving \(c\) have been collected.
				By \eqref{eq:adaFRB:c1} the first summand equals \(\bar c^2\bigl(\frac{\lambda}{2}\beta^2+(1+\mu)(1+\alpha\beta)^2\bigr)\), and therefore
				\begin{equation}\label{eq:adaFRB:Cmin}
					C_{k+1}
				\geq
					\Bigl(
						\tfrac{\lambda}{2}\beta^2
						+
						(1+\mu)(1+\alpha\beta)^2
					\Bigr)
					\bigl(
						\bar c^2-c^2
					\bigr)
				\eqqcolon
					C_{\min}.
				\end{equation}
				Since \(c<\bar c\) for every \(\alpha\in[1,2]\), as verified in \cref{rem:coeff}, one concludes that \(C_{k+1}\geq C_{\min}>0\).
				
				Having verified that \(A_{k+1},B_{k+1},C_{k+1}\geq0\) for every \(k\in\N\), \cref{thm:recipe} applies.
				In particular, \cref{thm:descent} is precisely assertion \ref{thm:FRBa:Lyapunov}, implying that \(\seq{\xk}\) is bounded, so that \cref{ass:f} provides a compact and convex set \(\V\) containing all the iterates on which \(F\) is Lipschitz continuous with some modulus \(L_{F,\V}<\infty\).
				Then \(\Lk\leq L_{F,\V}\), hence \(\nicefrac{c}{\Lk}\geq\nicefrac{c}{L_{F,\V}}\eqqcolon s_{\rm min}>0\), for every \(k\in\N\).
				Since \(\frac{2}{3}+\frac{2\alpha}{5}>1\), the second term at \cref{state:adaFRB:rhok*} is at least \(\gamk\), whence
				\[
					\gamk*
				\geq
					\min\set{
						\gamk
					,\,
						\gamk\sqrt{\tfrac{1}{\alpha}+\rhok}
					,\,
						s_{\rm min}
					}
				\qquad
					\forall k\in\N,
				\]
				and \cref{thm:gammin} yields assertion \ref{thm:FRBa:gammin}.
				In turn, \(\inf_{k\in\N}\gamk>0\), so that \(\sum_{k=1}^{K+1}\gamk\) grows linearly in \(K\); combined with \cref{thm:Vkrate} this yields assertion \ref{thm:FRBa:gap}.
				
				That same term is in fact no smaller than \(\gamk\sqrt{\alpha}\), since \(\frac{2}{3}+\frac{2\alpha}{5}\geq\sqrt{\alpha}+\frac{1}{24}\) for every \(\alpha\in[1,2]\), with equality at the tangency point \(\alpha=\frac{25}{16}\).
				\Cref{thm:gamavg} then yields the refinement of assertion \ref{thm:FRBa:gamavg}.
				
				Finally, \eqref{eq:adaFRB:Cmin} shows that \(\liminf_{k\to\infty}C_k\geq C_{\min}>0\), the strict inequality owing to \(c<\bar c\).
				Together with \(\inf_{k\in\N}\gamk>0\), \cref{thm:limit} applies and gives assertion \ref{thm:FRBa:seqcvg}.
				
			\subsubsection[Proof of Theorem \ref*{thm:adaFRB+}]{Proof of \cref{thm:adaFRB+}}\label{thm:proof_adaFRB+}%

				Throughout this proof \(\alpha\in[1,2]\) is fixed, so that the absolute values in \eqref{eq:ABCwc} can be dropped.
				In contrast with \cref{thm:proof_general}, here the local quantities \(\tauk\) and \([\cos\phi_k]_-\) are retained rather than bounded by \(1\).
				The constants \(c,\varepsilon,\lambda,\mu\) set at the initialization of \refadaFRB+ are precisely those of \eqref{eq:adaFRB:coeff} with \(\beta=1\); in particular \(c=\varepsilon=\bar c\), the value \eqref{eq:adaFRB:c} being read at \(\beta=1\).
				
				As in \cref{thm:proof_general}, the first term in the minimum at \cref{state:adaFRB+:rhok*} enforces \(\rhok*\leq\sqrt{\frac{1}{\alpha}+\rhok}\), which is precisely \(A_{k+1}\geq0\), and recursively \(\rhok*\leq\rhomax\) for every \(k\in\N\).
				With the value of \(\lambda\) above, the condition \(B_{k+1}\geq0\) reads
				\begin{equation}\label{eq:AB}
					\rhok*^2
				\leq
					\tfrac{
						1
						+
						\frac{1}{\mu}
						+
						\frac{2-\alpha}{2\alpha\varepsilon}
					}{
						1
						+
						\frac{1}{\mu}
						[\cos\phi_k]_-
						+
						\tauk
						\frac{2-\alpha}{2\alpha\varepsilon}
					},
				\end{equation}
				and \cref{state:adaFRB+:betak} enforces exactly this bound through the first term of \(\betk\).
				Notice that the right-hand side of \eqref{eq:AB} is always at least \(1\), and strictly larger unless \(\cos\phi_k=-1\) (and \(\tauk=1\) when \(\alpha<2\)); it is this slack, invisible to a worst-case analysis, that makes the choice \(\beta=1\) admissible here.
				
				As for \(C_{k+1}\), the bound \(\gamk*\Lk\leq c\) enforced at \cref{state:adaFRB+:rhok*} and the fact that \(\lk\geq0\) reduce the quartic condition \(C_{k+1}\geq0\) to the second-order one
				\begin{equation}
					C_{k+1}
				\geq
					\tfrac{1}{2}
					-
					c^2
					\Bigl(
						\tfrac{\lambda}{2}
						\rhok*^2
						+
						(1+\mu[\cos\phi_k]_-)
						(1+\alpha\rhok*)^2
					\Bigr)
					+
					\Bigl(
						\alpha
						-
						1
						-
						\tauk
						\alpha\varepsilon
						\tfrac{2-\alpha}{2}
					\Bigr)
					\rhok*^2,
				\label{eq:Clb}
				\end{equation}
				whose right-hand side is nonnegative whenever \(\rhok*\leq\tilde\rho_k\), with \(\tilde\rho_k\in[1,\infty]\) the smallest solution \(\rho\) of
				\begin{align}
				\nonumber
					0
				={} &
					\Bigl(
						2(\alpha-1)
						-
						\tauk
						\alpha\varepsilon
						(2-\alpha)
						-
						2
						(1+\mu[\cos\phi_k]_-)
						\alpha^2c^2
						-
						\lambda
						c^2
					\Bigr)
					\rho^2
				\\
				&
					-
					4
					(1+\mu[\cos\phi_k]_-)
					c^2\alpha\rho
					+
					1
					-
					2
					(1+\mu[\cos\phi_k]_-)c^2
				\label{eq:C}
				\end{align}
				(with \(\tilde\rho_k=\infty\) if no solution exists), and this is the second term of \(\betk\) in \cref{state:adaFRB+:betak}.
				That \(\tilde\rho_k\geq1\) is a consequence of the way \(c\) is defined: by \cref{rem:coeff} with \(\beta=1\), the value \(c=\bar c\) makes \eqref{eq:Clb} tight at \(\rhok*=1\) in the worst case \(\tauk=[\cos\phi_k]_-=1\), so that \(\rhok*=1\) is admissible for every realization of the local quantities.
				
				All three coefficients being nonnegative, \cref{thm:recipe} applies.
				The proof now follows the same reasoning as that of \cref{thm:proof_general}, with some minor modifications.
				As before, \cref{thm:descent} is precisely assertion \ref{thm:FRBa:Lyapunov}, implying that \(\seq{\xk}\) is bounded, whence \cref{ass:f} provides a compact and convex set \(\V\) containing all the iterates on which \(F\) is Lipschitz continuous with some modulus \(L_{F,\V}<\infty\).
				Then \(\Lk\leq L_{F,\V}\), hence \(\nicefrac{c}{\Lk}\geq\nicefrac{c}{L_{F,\V}}\eqqcolon s_{\rm min}>0\), for every \(k\in\N\).
				Since \(\betk\geq1\) as observed above, the second term at \cref{state:adaFRB+:rhok*} is at least \(\gamk\), whence
				\[
					\gamk*
				\geq
					\min\set{
						\gamk
					,\,
						\gamk\sqrt{\tfrac{1}{\alpha}+\rhok}
					,\,
						s_{\rm min}
					}
				\qquad
					\forall k\in\N,
				\]
				and \cref{thm:gammin} yields assertion \ref{thm:FRBa:gammin}.\footnote{%
					A refined lower bound on the average stepsize is not applicable here: unlike the constant cap of \refadaFRB, the ratio \(\betk\) is only guaranteed to exceed \(1\), and not \(\sqrt\alpha\), and thus \cref{thm:gamavg} cannot be invoked.
				}
				The same assertion gives \(\inf_{k\in\N}\gamk>0\), so that \(\sum_{k=1}^{K+1}\gamk\) grows linearly in \(K\); combined with \cref{thm:Vkrate} this yields assertion \ref{thm:FRBa:gap}.
				In particular, \(\sum_{k\in\N}\gamk=\infty\), so that \cref{thm:ergodic} proves the claim about the ergodic subsequential convergence.
				
				Unlike in \cref{thm:proof_general}, no uniform positive margin on \(C_{k+1}\) is available: \refadaFRB+ enforces \(\gamk*\Lk\leq c\) with the very value of \(c\) that makes \eqref{eq:Clb} tight in the worst case, so that \eqref{eq:Clb} may vanish.
				If, however, the parameter \(c\) at \cref{state:adaFRB+:rhok*} is replaced by any \(c'\in(0,c)\), then the same computation with \(c'\) in place of \(c\) in \eqref{eq:Clb} leaves a positive slack, and \(\liminf_{k\to\infty}C_k>0\).
				\Cref{thm:limit} then applies and gives the last two assertions.
				
				\begin{remark}[the case \(\alpha=2\)]
					The choice \(\alpha=2\) leads to considerable simplifications:
					not only is the dependency on \(\tauk\) and \(\varepsilon\) lost, but any solution to \eqref{eq:AB} also solves \eqref{eq:C}.
					To see this, note that in this case the parameters are
					\[
						c
					=
						\tfrac{\sqrt{6}}{10}
					>
						\tfrac{1}{4.09},
					\quad
						\mu=\tfrac{2}{3},
					\quad\text{and}\quad
						\lambda
					=
						20.
					\]
					Moreover, omitting the favorable term \(\gamk\lk\) and denoting \(c_k\coloneqq[\cos\phi_k]_-\), \eqref{eq:C} simplifies as
					\[
						0
					\leq
						4(1-\ck)
						\rho^2
						-
						2(3+2\ck)
						\rho
						+
						11
						-
						\ck,
					\]
					implying that its left solution is
					\[
						\tilde\rho_k
					\geq
						\tfrac{
							3+2\ck
							-
							\sqrt{[(3+2\ck)^2-4(1-\ck)(11-\ck)]_+}
						}{
							4(1-\ck)
						}
					=
						\tfrac{
							11-\ck
						}{
							3+2\ck
							+
							\sqrt{5[12\ck-7]_+}
						}.
					\]
					Since
					\(
						\hrhok
					=
						\min\set{
							\sqrt{
								\tfrac{1}{2}+\rhok
							}
							,\,
							\sqrt{
								\tfrac{
									\sqrt{3}
								}{
									\sqrt{3}-1
									+
									[\cos\phi_k]_-
								}
							}
						}
					\leq
						\sqrt{\frac{\sqrt{3}}{\sqrt{3}-1+\ck}}
					\),
					one can easily verify that \(\tilde\rho_k\geq\hrhok\) holds for any \(\ck\in[0,1]\).
					With these simplifications, one obtains the special case of \cref{alg:adaFRB+2}.
				\end{remark}
				
				\begin{remark}[on the choice of the coefficients]\label{rem:coeff}%
					The parameters \(\varepsilon,\lambda,\mu\) employed in the two proofs above are not arbitrary, and are in fact obtained from one and the same optimization.
					Given a cap \(\beta\) on the stepsize ratio, in the worst case \(\tauk=[\cos\phi_k]_-=1\) and \(\lk=0\) the requirement \(C_{k+1}\geq0\) amounts to the safety bound \(\gamk*\Lk\leq c\) being enforced with
					\begin{equation}\label{eq:adaFRB:cmax}
						c^2
					\leq
						\tfrac{
							1
							+
							\beta^2
							\bigl(
								2\alpha
								-
								2
								-
								\alpha\varepsilon(2-\alpha)
							\bigr)
						}{
							2\alpha^2\beta^4
							\left(
								1+\frac{1}{\mu}
							\right)
							+
							2(1+\mu)(1+\alpha\beta)^2
							+
							\frac{1}{\varepsilon}
							\alpha(2-\alpha)\beta^4
						},
					\end{equation}
					where \(\lambda\) has been replaced by its expression in \eqref{eq:adaFRB:coeff}, and \(\varepsilon,\mu>0\) are still free.
					The larger the right-hand side, the larger the admissible stepsizes, and \eqref{eq:adaFRB:coeff} is precisely the maximizer.
					Indeed, \(\mu\) appears in the denominator only, through \(2\alpha^2\beta^4(1+\nicefrac{1}{\mu})+2(1+\mu)(1+\alpha\beta)^2\), which is a strictly convex function of \(\mu>0\) minimized at \(\mu=\frac{\alpha\beta^2}{1+\alpha\beta}\), where it equals \(2(1+\alpha\beta+\alpha\beta^2)^2\).
					Maximizing the resulting expression over \(\varepsilon>0\) then gives \(\varepsilon=\beta\bar c\), with \(\bar c\) as in \eqref{eq:adaFRB:c}; at this point \eqref{eq:adaFRB:cmax} holds with equality for \(c=\bar c\), which is \eqref{eq:adaFRB:c1}.
					Thus \(\bar c\) is the largest safety coefficient that the cap \(\beta\) allows.
				
					This also quantifies the trade-off between the two safeguards, and accounts for the different choices made by the two schemes.
					Smaller values of \(\beta\) yield larger values of \(\bar c\), hence larger stepsizes, at the expense of a more restrictive cap on their relative growth.
					The limiting choice \(\beta=1\) maximizes \(\bar c\), but under the worst-case analysis it forces \(\gamk*\leq\gamk\) and thus nonincreasing stepsizes: theoretically admissible, yet contrary to the self-adaptive philosophy underlying our approach.
					This is why \refadaFRB{} settles for the intermediate value \eqref{eq:adaFRB:beta}, whereas \refadaFRB+ can afford the coefficients associated with \(\beta=1\): there the admissible ratio is not the constant \(\beta\) but the iteration-dependent \(\betk\) of \eqref{eq:AB}, whose slack a worst-case analysis cannot see.
				
					Finally, \refadaFRB{} enforces \(c=\frac{1}{7-\alpha}\) rather than the optimal \(\bar c\) of \eqref{eq:adaFRB:c}, trading a marginally smaller stepsize for a closed-form expression.
					This is legitimate, and yields the strict inequality \(c<\bar c\) used in \cref{thm:proof_general}: with \(\beta\) as in \eqref{eq:adaFRB:beta}, the function \(\alpha\mapsto\bar c-\frac{1}{7-\alpha}\) is positive on \([1,2]\), its minimum \(2.7\cdot10^{-3}\) being attained at \(\alpha=1\), where \(\beta=\frac{16}{15}\) and \(\bar c=0.16934\).
				\end{remark}

	\section{Simulations}

		In this section, we assess the performance of the proposed methods on several standard monotone inclusion problems.
		The experimental framework is based on the open source code of \cite[\S6]{upadhyaya2026lyapunov}, and all experiments are implemented in Python 3.13.
		For all experiments, unless otherwise specified, convergence is measured using a computable inclusion residual derived from the proximal update. 
		To cover the different algorithms in a unified way, we consider a generic proximal update of the form
		\begin{equation}\label{eq:proxmapping}
			\hat x^k = \prox_{\gamk* g}\bigl(y^k - \gamk d_k\bigr),
		\end{equation}
		where \(y^k\) is the point from which the proximal step is taken and \(d_k\) denotes the forward direction used by the corresponding algorithm.
		We then define the residual as
		\begin{equation}\label{eq:rk}
			r^{k+1}
		\coloneqq
			\tfrac{y^k- \hat x^k}{\gamk*} - d_k + F(\hat x^k)
		\in
			F(\hat x^k)+\partial g(\hat x^k)
		=
			T(\hat x^k),
		\end{equation}
		where the inclusion owes to the optimality condition of the proximal mapping, and we report \(\norm{r^{k+1}}\) on the vertical axis in our plots.
		The quantities \(y^k\), \(d_k\), and \(\gamk*\) are determined according to the specific proximal update rule of each algorithm.
		In the particular case where \(g\equiv 0\), the residual reduces to \(r^{k+1} = F(\hat x^k)\).
		In all experiments, the algorithm terminates when either the maximum number of iterations is reached or \(\norm{r_{k+1}}\leq 10^{-10}\).
		As the horizontal axis, we report the number of evaluations of \(F\), which provides a more informative comparison
		than iteration counts since the methods differ in how many such evaluations each iteration requires.
		In all the problems considered here the proximal mapping of \(g\) admits a closed form whose cost is negligible
		compared to that of \(F\), so that the latter is the relevant measure of work.
		
		For each experiment, we also display the stepsize evolution of our adaptive algorithms over the first 200 iterations; the stepsize behavior in subsequent iterations remains largely consistent with the trend observed in this initial period.
		When \(F\) is globally Lipschitz continuous, the stepsize plots are scaled by its modulus \(L_F\), and the baseline of \(\gamk L_F=1\) is emphasized with a dotted gray line.

		\subsection*{Compared algorithms}

			The proposed adaptive method, denoted by \refadaFRB{} and \refadaFRB+, is compared against several representative first-order schemes, including \AGRAAL{}, Tseng's forward-backward-forward method (\FBF), Malitsky's forward-reflected-backward method (\frb), and the extragradient method (\EG).
			For smooth minimax problems, we additionally include the extra anchored gradient (\EAG) method as an accelerated baseline.
			The selected solvers cover both fixed-stepsize classical algorithms and more recent adaptive variants.
			Only methods compatible with the structure of each test problem are used in the comparisons.
			For all fixed-stepsize algorithms, the stepsize is set to \(0.9\) of the maximum value permitted by the respective theoretical condition, unless stated otherwise.
			
			\paragraph{Line style conventions}
				To aid visualization, we adopt the following line-style and color conventions.
				Dashed lines denote nonadaptive constant-stepsize algorithms (which require global Lipschitz continuity of \(F\)), while solid lines with markers denote adaptive algorithms.
				As detailed in the following subsection, the proposed methods are tested with two values of \(\alpha\), each represented in black or blue shading.
				In some experiments, certain nonadaptive algorithms produce overlapping curves that are visually indistinguishable.
				In such cases, we display only one representative curve and note the overlap both in the legend and in the figure caption.

			\begin{itemize}[leftmargin=*]
			\item
				\textbf{Proposed adaptive methods (\AdaFRB\ and \AdaFRB+)}
			
				In all experiments of this section, we evaluate \refadaFRB{} with \(\alpha=1\) and \(\alpha=2\) (denoted as \AdaFRB_1 and \AdaFRB_2, respectively), as well as \refadaFRB+{} with \(\alpha=1\) and \(\alpha=2\) (denoted as \AdaFRB+_1 and \AdaFRB+_2, respectively).
				While our algorithms permit arbitrary choices of \(\alpha\in[1, 2]\), the performance for intermediate values consistently lies between the two extremes, with one end typically yielding the best performance and the other the worst; the behavior varies monotonically as \(\alpha\) moves from one extreme to the other.
				Consequently, examining only the two extreme cases allows us to clearly gauge the full range of possible performance.
				It is worth noting that the term \(d_k\) in iteration \eqref{eq:proxmapping} depends on the choice of \(\alpha\).
				Hence, even for algorithms within the same family, the corresponding residual \(r_{k+1}\) is not identically defined across different \(\alpha\) values.
				We nonetheless emphasize that its norm provides a fair measure of optimality across all tested algorithms, as it follows from \eqref{eq:rk}.

			\item
				\textbf{Forward-reflected-backward (\frb) and the proposed generalizations \FRB}
			
				Proposed by Malitsky~\cite{malitsky2020forward}, differently from the extragradient and forward-backward-forward methods it requires only one evaluation of the operator \(F\) per iteration.
				This method corresponds to the proposed generalization \FRB{} with the specific choice of \(\alpha=1\).
				For any fixed \(\alpha>1/2\), starting from the current iterate \(\xk\) one iteration of the generalized \FRB{} here introduced amounts to
				\[
					\xk*
				=
					\prox_{\gamma g}\Bigl(
						\xk
						-
						\gamma\Bigl[
							(1+\alpha)F(\xk)
							-
							\alpha F(\xk')
						\Bigr]
					\Bigr).
				\]
				When \(\alpha=1\), the tailored analysis of \cite{malitsky2020forward} permits any \(\gamma < 1/(2L_F)\), which is less restrictive than our bound in the constant-stepsize regime; cf. \eqref{eq:FRBa_gamma}.
				Accordingly, in this regime we use a stepsize consistent with the guarantees of \cite{malitsky2020forward}, and replace \FRB_1 with \frb{} throughout the simulations.

			\item
				\textbf{Extragradient method (\EG)}
			
				Proposed by Korpelevich~\cite{korpelevich1976extragradient}, it is a classical first order method for solving monotone variational inequalities with \(F\) globally Lipschitz continuous.
				Given the current iterate \(\xk\), it performs the updates
				\begin{align*}
					\bar{x}^k &= \prox_{\gamma g}\bigl(\xk - \gamma F(\xk)\bigr),\\
					\xk* &= \prox_{\gamma g}\bigl(\xk - \gamma F(\bar{x}^k)\bigr).
				\end{align*}
				The method requires two \(F\) evaluations and two proximal operator evaluations per iteration.
				The stepsize \(\gamma\) is chosen such that \(\gamma \leq 1/L_F\).

			\item
				\textbf{Extra anchored gradient method (\EAG)}
			
				Proposed by Yoon and Ryu~\cite{yoon2021accelerated}, it addresses smooth convex--concave minimax problems, and achieves an \(O(1/k^2)\) convergence rate.
				Given an initial point \(x^0\), each iteration is defined by
				\begin{align*}
				\bar{x}^k &= \xk + \tfrac{1}{k+2}(x^0 - \xk) - \gamma F(\xk),\\
				\xk* &= \xk + \tfrac{1}{k+2}(x^0 - \xk) - \gamma F(\bar{x}^k)
				\end{align*}
				and thus requires two evaluations of the operator \(F\).
				This methods will only be compared against in experiments where \(g \equiv 0\).
				According to its theoretical stepsize condition we set \(\gamma = \tfrac{1}{8L_F}\).

			\item
				\textbf{Forward-backward-forward method (\FBF)}
			
				Proposed by Tseng~\cite{tseng2000modified}, it is another classical splitting algorithm for solving monotone inclusions of the form \(0 \in F(x) + \partial g(x)\).
				Given the current iterate \(\xk\), the method performs the updates
				\begin{align*}
					\bar{x}^k &= \prox_{\gamma g}\bigl(\xk - \gamma F(\xk)\bigr),\\
					\xk* &= \bar{x}^k - \gamma \bigl(F(\bar{x}^k) - F(\xk)\bigr).
				\end{align*}
				The method requires the stepsize \(\gamma\) to satisfy \(\gamma \leq 1/L_F\).
				Compared to the extragradient method, \FBF{} requires only one proximal evaluation per iteration, while still using two  \(F\) operator evaluations.

			\item
				\textbf{\AGRAAL}
			
				We implement \AGRAAL{} by Alacaoglu et al.~\cite{alacaoglu2023beyond}, an adaptive extension of the \GRAAL{} method by Malitsky~\cite{malitsky2020golden}.
				The nonadaptive \GRAAL{} iteration is
				\begin{align*}
					\bar{x}^k
					&=
					\tfrac{\varphi-1}{\varphi} \xk
					+
					\tfrac{1}{\varphi}\bar{x}^{k-1},\\
					\xk*
					&=
					\prox_{\gamma g}
					\bigl(\bar{x}^k-\gamma F(\xk)\bigr),
				\end{align*}
				where \(\varphi=\tfrac{1+\sqrt{5}}{2}\) is the golden ratio parameter in the original \GRAAL{} method of~\cite{malitsky2020golden}, and the stepsize is required to satisfy \(\gamma \le \tfrac{\varphi}{2L_F}\).
				Alacaoglu et al.~\cite{alacaoglu2023beyond} further restricts the upper bound \(\varphi=2\).
				They showed the improved practical performance of this variant over the original golden-ratio parameter choice for monotone problem.
				Therefore, whenever \(L_F\) is available, we follow the parameter choice suggested in \cite[Cor. 2]{alacaoglu2023beyond} and set \(\varphi = 2\) and \(\gamma = \tfrac{0.999}{L}\).
			
				The adaptive version \AGRAAL{} uses the same extrapolation step, but replaces the fixed stepsize by a locally adaptive rule.
				Starting from \(\bar x^0=x^0\), an initial stepsize \(\gamma_0>0\), and \(\theta_0=\varphi\), it first computes
				\[
					x^1 = \prox_{\gamma_0 g}\bigl(x^0-\gamma_0 F(x^0)\bigr).
				\]
				Then, for \(k\ge 1\),
				\begin{align*}
					\gamk
				={} &
					\min\left\{
						\alpha \gamma_{k-1},
						\tfrac{\varphi\theta_{k-1}}{4\gamma_{k-1}}
						\tfrac{\norm{x^k-x^{k-1}}^2}
						{\norm{F(\xk)-F(x^{k-1})}^2}
					\right\},
				\\
					\bar{x}^k
				={} &
					\tfrac{\varphi-1}{\varphi} \xk
					+
					\tfrac{1}{\varphi}\bar {x}^{k-1},
				\\
					x^{k+1}
				={} &
					\prox_{\gamma_k g}
					\bigl(\bar{x}^k-\gamma_k F(\xk)\bigr),\
				\\
					\theta_k
				={} &
					\tfrac{\gamma_k}{\gamma_{k-1}}\varphi .
				\end{align*}
				We use the same parameter choice as in the original paper, namely \(\alpha=\tfrac{1}{\varphi}+\tfrac{1}{\varphi^2}\).
				At each iteration, this method requires only one evaluation of \(F\) operator and one proximal computation.
			\end{itemize}

		\subsection{Quadratic minimax problem}\label{sec:minimax}

			We first consider the quadratic convex-concave minimax problem
			\begin{equation}\label{eq:quadratic_minimax}
				\min_{x\in\R^n}\max_{y\in\R^n} \mathcal{L}(x, y)
			\end{equation}
			where the saddle function \(\mathcal{L}:\R^n\times\R^n\to\R\) is given by
			\begin{equation*}
				\mathcal{L}(x, y) = \tfrac{1}{2}(x-x^\star)^T A (x- x^\star)
					+(x-x^\star)^T C (y- y^\star)
					-\tfrac{1}{2}(y-y^\star)^T B (y- y^\star).
			\end{equation*}
			Here \(x^\star,y^\star\in\R^n\), \(A,B\in\mathcal{S}_+^n\), and \(C\in\R^{n\times n}\).
			The point \((x^\star,y^\star)\) is generated randomly and serves as the target saddle point around which the quadratic model is constructed.
			The first-order optimality conditions of \eqref{eq:quadratic_minimax} can be written as the monotone inclusion
			\[
				0 \in F(z),
			\qquad z=(x,y)\in\R^{2n},
			\]
			with \(g\equiv 0\) and
			\[
			F(z)
			=
			\begin{pmatrix}
			\nabla_x \mathcal{L}(x,y)\\[0.3em]
			-\nabla_y \mathcal{L}(x,y)
			\end{pmatrix}
			=
			\begin{pmatrix}
			A(x-x^\star)+C(y-y^\star)\\[0.3em]
			B(y-y^\star)-C^\top(x-x^\star)
			\end{pmatrix},
			\quad\text{with}\quad
				L_F
			=
				\norm*{
					\begin{pmatrix}
					A & C\\
					-C^\top & B
					\end{pmatrix}
				}
			\]
			as its Lipschitz constant.
			Since \(g\equiv 0\), the residual used in this experiment reduces to \(\norm{F(z^k)}\).
			
			To generate problem instances, we use a parameter \(\omega\ge 0\) to scale the quadratic terms associated with \(x\) and \(y\).
			More precisely, the matrices \(A\) and \(B\) are generated as \(A=\omega \widetilde A, B=\omega \widetilde B\), where \(\widetilde A\) and \(\widetilde B\) are random positive definite matrices.
			Thus, when \(\omega=0\), we obtain the bilinear minimax problem
			\[
			\mathcal{L}(x,y)=(x-x^\star)^\top C (y-y^\star),
			\]
			whereas for \(\omega>0\) the problem becomes a regularized quadratic minimax problem, and in our construction it is strongly convex in \(x\) and strongly concave in \(y\).
			
			We generate \(A\), \(B\), and \(C\) with prescribed spectral properties.
			Specifically, \(A\) and \(B\) are generated as symmetric positive definite matrices with prescribed condition numbers \(\kappa_A\) and \(\kappa_B\), respectively, while \(C\) is generated with prescribed condition number \(\kappa_C\).
			This allows us to test the algorithms on instances with more controlled conditioning and spectral distributions.
			We take \(\kappa_A = \kappa_B = 100, \kappa_C = 1000\) in all simulations.
			We consider two values of the regularization parameter, namely \(\omega=0\) and a small positive value of \(\omega=10^{-5}\), in order to compare the purely bilinear regime with a weakly regularized strongly convex-strongly-concave regime.
			
			\Cref{fig:qmpw} reports the simulation results.
			\EAG{} performs remarkably well on these instances, which is unsurprising as they are precisely the smooth minimax problems it was designed for.
			The remaining methods behave comparably; interestingly, the plot lines of \EG{} and \FBF{} overlap and are indistinguishable one another, and are thus grouped together in the legend for ease of readability.
			
			The stepsize plots exhibit the intended adaptive behavior: after a short initial transient the stepsizes keep oscillating rather than settling to a fixed value.
			Two features stand out, and will be observed again in the experiments that follow: for a given \(\alpha\) the two schemes operate at essentially the same stepsize level, while smaller values of \(\alpha\) sustain visibly larger stepsizes.
			The latter does not by itself favor \(\alpha=1\), as different values of \(\alpha\) generate different algorithmic updates.
			
				\begin{figure}[!htb]
					\centering
					\includetikz[width=\linewidth]{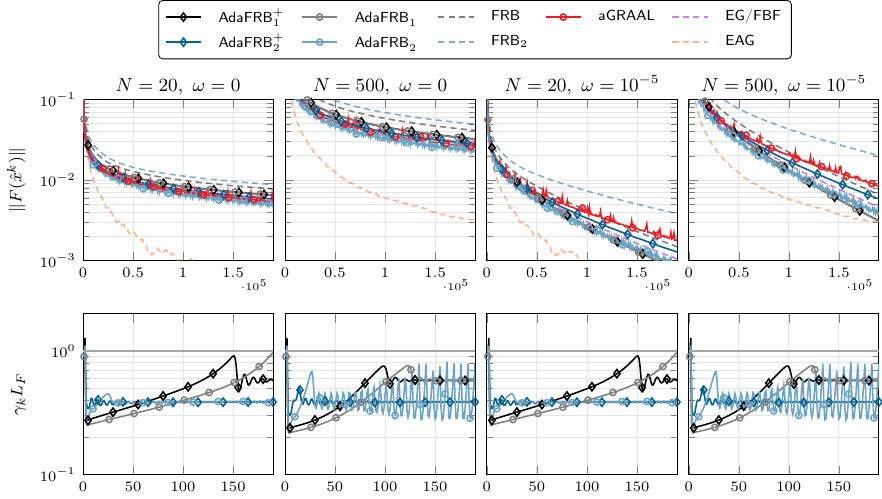}%
					\caption{%
						Top: Performance comparison of the algorithms on the quadratic minimax problem \eqref{eq:quadratic_minimax} of \S\ref{sec:minimax};
						overlapping plots of \EG{} and \FBF{} are merged.
						Bottom: Stepsize magnitude over the first \(200\) iterations for the proposed adaptive algorithms, scaled by \(L_F\);
						the dotted gray line marks the baseline \(\gamk L_F=1\).
					}
					\label{fig:qmpw}
				\end{figure}

		\subsection{Bilinear zero-sum game with simplex constraints}\label{sec:bilinear_game}

			Consider the bilinear zero-sum game with simplex constraints given by
			\begin{equation}\label{eq:bilinear_game}
			\min_{x \in \Delta_n} \max_{y \in \Delta_n} x^\top A y,
			\end{equation}
			where \(A \in \R^{n\times n}\) is the payoff matrix and
			\[
			\Delta_n
			=
			\left\{
			w \in \R^n_+ \,\middle|\, w^\top \mathbf{1} = 1
			\right\}
			\]
			is the probability simplex in \(\R^n\).
			Problem \eqref{eq:bilinear_game} is equivalent to finding a saddle point
			\((x^\star,y^\star)\in \Delta_n \times \Delta_n\), whose existence is guaranteed, such that
			\[
			(x^\star)^\top A y
			\leq
			(x^\star)^\top A y^\star
			\leq
			x^\top A y^\star,
			\qquad
			\forall (x,y)\in \Delta_n \times \Delta_n.
			\]
			This, in turn, is equivalent to solving \eqref{eq:P} in \(\R^{2n}\), with \(g=\delta_{\Delta_n \times \Delta_n}\) and \(F:\R^{2n}\to\R^{2n}\) the monotone and \(L_F\)-Lipschitz continuous operator given by
			\[
				F(z)
			=
				\begin{pmatrix}
					Ay\\
					-A^\top x
				\end{pmatrix}
				\quad
				\forall z=(x,y)\in \R^n\times\R^n,
			\quad\text{with}\quad
				L_F
			=
				\norm*{A}.
			\]
			
			We use payoff matrices \(A\) with prescribed singular value distributions, generated in the same way as in the quadratic minimax experiment, with problem dimensions \(n=500\) and \(n=1000\).
			\Cref{fig:bilinear_control} reports the simulation results.
			The picture closely matches that of \cref{sec:minimax}, both in the relative performance of the methods and in the behavior of the adaptive stepsizes.
			
			\begin{figure}[!htbp]
				\centering
			 	\includetikz[width=0.6\linewidth]{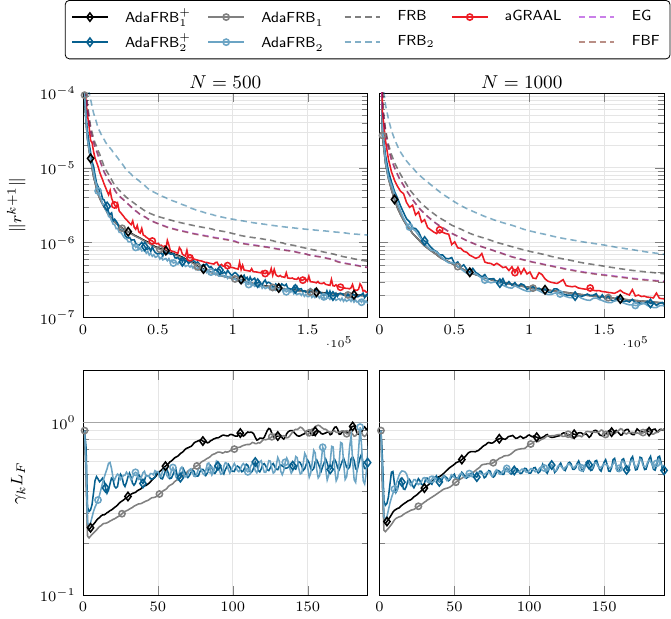}%
				\caption{%
					Top: performance comparison of the algorithms on the bilinear zero-sum game \eqref{eq:bilinear_game} with simplex constraints of \S\ref{sec:bilinear_game}.
					Bottom: stepsize magnitude over the first \(200\) iterations for the proposed adaptive algorithms, scaled by \(L_F\);
					the dotted gray line marks the baseline \(\gamk L_F=1\).
				}
			    \label{fig:bilinear_control}
			\end{figure}

		\subsection{Cournot--Nash equilibrium problems}

			We next report experiments on two classes of Cournot--Nash equilibrium problems, which form a standard family of benchmarks for monotone variational inequalities.
			Consider a noncooperative game with \(n\) players. Each player \(i\) selects a strategy \(z_i \in Z_i\) and is associated with a loss function \(\phi_i\).
			Let
			\[
				Z=\prod_{i=1}^n Z_i.
			\]
			A pure-strategy Nash equilibrium is a point \(z=(z_1,\ldots,z_n)\in Z\) such that
			\[
				z_i \in
				\arg\min_{x\in Z_i}
				\phi_i(x;z_{\smallsetminus i}),
				\qquad
				i=1,\ldots,n,
			\]
			where \(z_{\smallsetminus i}\) denotes the strategies of all players except player \(i\).
			Under standard convexity assumptions, the equilibrium problem can be reformulated as the monotone inclusion
			\[
				0\in F(z)+\partial g(z),
			\]
			where
			\[
				F(z)=\bigl(\nabla_{z_i}\phi_i(z)\bigr)_{i=1}^n,
				\qquad
				g=\delta_Z.
			\]
			We consider both a linear Cournot model, which leads to an affine monotone operator, and a nonlinear extension with a genuinely nonlinear operator.

			\subsubsection{Linear Cournot--Nash model}\label{sec:CNlinear}
				We first consider the classical Cournot--Nash model for an oligopolistic market with differentiated products.
				Each producer \(i\) chooses a production level \(z_i\in[0,T_i]\).
				The production cost and inverse demand functions are given by
				\[
				c_i(z_i)=a_i z_i^2+b_i z_i,
				\qquad
				p_i(z)=m_i-d_i\sum_{j=1}^n z_j,
				\]
				and the corresponding loss function is
				\[
				\phi_i(z)=c_i(z_i)-z_i p_i(z).
				\]
				The feasible set is
				\[
				Z=\prod_{i=1}^n[0,T_i].
				\]
				For this model, the associated operator takes the affine form
				\[
				F(z)=Az+q,
				\]
				where
				\[
				A=
				\begin{pmatrix}
				2(a_1+d_1) & d_1 & \cdots & d_1\\
				d_2 & 2(a_2+d_2) & \cdots & d_2\\
				\vdots & \vdots & \ddots & \vdots\\
				d_n & d_n & \cdots & 2(a_n + d_n)
				\end{pmatrix},
				\qquad
				q=
				\begin{pmatrix}
				b_1-m_1\\
				\vdots\\
				b_n-m_n
				\end{pmatrix}.
				\]
				Hence, the resulting equilibrium problem reduces to a linear monotone variational inequality with Lipschitz constant \(L_F=\norm{A}\).
				We report results for two problem dimensions, namely \(n=10\) and \(n=100\).

			\subsubsection{Nonlinear Cournot--Nash model}\label{sec:CNnonlinear}
			We next consider a nonlinear extension of the previous model following \cite{baghbadorani2026hybrid}. Each producer chooses a production level \(x_i\in\mathbb R_+\), and the equilibrium is characterized by the variational inequality
			\[
			\langle F(x^\star),x-x^\star\rangle \ge 0,
			\qquad
			\forall x\in\mathbb R_+^n,
			\]
			where \(F(x)=(F_i(x))_{i=1}^n\) is defined componentwise as
			\[
			F_i(x)
			=
			f_i'(x_i)
			- p\left(\sum_{j=1}^n x_j\right)
			-
			x_i p'\left(\sum_{j=1}^n x_j\right).
			\]
			The inverse demand function and production costs are given by
			\[
			p(Q)=5000^{1/\gamma}Q^{-1/\gamma},
			\]
			and
			\[
			f_i(x_i)
			=
			c_i x_i
			+
			\frac{\beta_i}{\beta_i+1}
			T_i^{1/\beta_i}
			x_i^{(\beta_i+1)/\beta_i}.
			\]
			In contrast to the linear model, both the demand and cost functions depend nonlinearly on the production levels, yielding a nonlinear monotone operator \(F\).
			The smoothness properties of the problem are governed by the parameters \(\gamma\), \(\beta_i\), and \(T_i\).
			
			\begin{figure}[!htbp]
				\centering
				\includetikz[height=0.43\textheight]{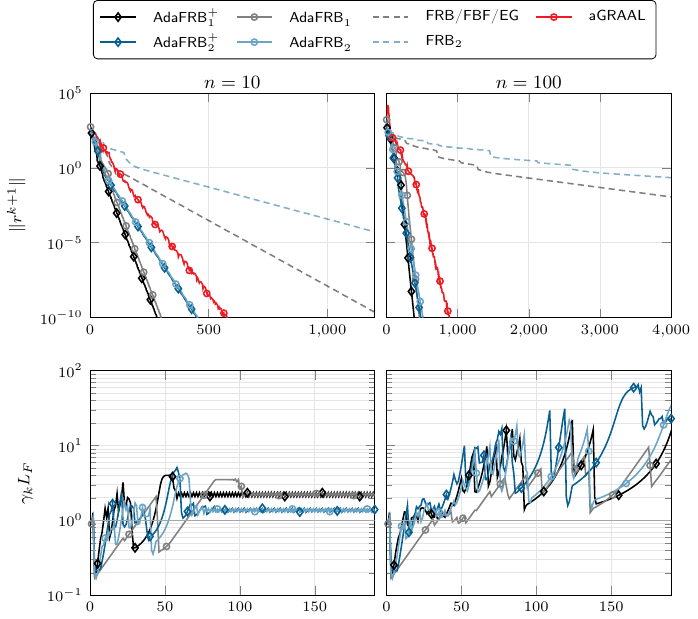}%
				\vfill
				\caption{%
					Performance comparison of the algorithms on the Cournot--Nash equilibrium problem of \S\ref{sec:CNlinear} (linear, above) and \S\ref{sec:CNnonlinear} (nonlinear, below);
					overlapping plots of \frb{}, \EG{} and \FBF{} are merged.
					The bottom rows in the respective plots report the stepsize magnitude over the first \(200\) iterations for the proposed adaptive algorithms; in the linear case (above), it is scaled by \(L_F\), with the dotted gray line marking the baseline \(\gamk L_F=1\).
					For the nonlinear experiments (below), only (adaptive) algorithms able to cope with the lack of global Lipschitzianity of \(F\) are considered.
				}
				\vfill
				\includetikz[height=0.43\textheight]{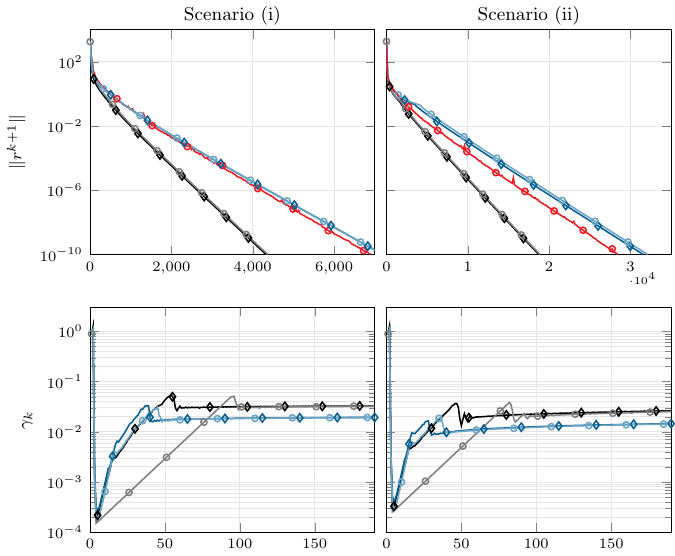}%
				\label{fig:cnep}
			\end{figure}
			
			In the experiments, we set \(n=1000\) and generate the parameters randomly according to the following two scenarios:
			\begin{itemize}
			\item[(i)]
			\(\gamma=1.1\),
			\(\beta_i\sim\mathcal U(0.5,2)\),
			\(c_i\sim\mathcal U(1,100)\),
			\(T_i\sim\mathcal U(0.5,5)\);
			
			\item[(ii)]
			\(\gamma=1.5\),
			\(\beta_i\sim\mathcal U(0.3,4)\),
			with \(c_i\) and \(T_i\) generated as above.
			\end{itemize}
			\Cref{fig:cnep} reports the simulation results.
			All methods are initialized from the same randomly generated feasible point.
			The results for this problem are more discernible.
			In the linear setting, our methods converge remarkably fast.
			In the nonlinear setting, the \(\alpha=1\) variant outperforms \AGRAAL, a comparable adaptive method.
			The stepsize plots are also more telling here: unlike in the previous experiments, the local estimates \(\Lk\) fall well below the global modulus, and the adaptive rules respond by letting the stepsizes grow by more than an order of magnitude over the displayed window.

		\subsection{Sparse logistic regression}\label{sec:logreg}

			We finally consider the \(\ell_1\) regularized sparse logistic regression problem
			\begin{equation}\label{eq:logreg}
				\min_{x\in\R^n} \varphi(x)\coloneqq
				\sum_{i=1}^{m}
				\log\bigl(1+\exp(-b_i\innprod{a_i}{x})\bigr)
				+
				\lambda\norm{x}_1,
			\end{equation}
			where \(a_i\in\mathbb \R^n\) and \(b_i\in\{-1,1\}\) for \(i=1,\ldots,m\), and \(\lambda > 0\) is the regularization parameter.
			The optimality condition of the problem can be written as the monotone inclusion
			\[
				0\in F(x)+\partial g(x),
			\]
			where
			\[
				F(x) = \sum_{i=1}^m -b_i a_i \sigma(-b_i\innprod{a_i}{x}) = K^T\sigma(Kx),
			\]
			\[
				\sigma :\R^m\to\R^m:
				\begin{pmatrix}
					u_1\\
					\vdots\\
					u_m
				\end{pmatrix}
				\mapsto
				\begin{pmatrix}
					\tfrac{\exp(u_1)}{1 + \exp(u_1)}\\
					\vdots\\
					\tfrac{\exp(u_m)}{1 + \exp(u_m)}
				\end{pmatrix},
				\qquad
				K =
				\begin{pmatrix}
					-b_1 a_1^T\\
					\vdots\\
					-b_m a_m^T
				\end{pmatrix}
				\in\R^{m\times n},
			\]
			and \(g = \lambda\norm{{}\cdot{}}_1\).
			The operator \(F\) is Lipschitz continuous with constant
			\[
				L=\tfrac14\norm{K}^2.
			\]
			In the experiments, we use the datasets \texttt{spambase} and \texttt{a9a} from~\cite{chang2011libsvm}.
			These instances provide sparse high-dimensional logistic regression benchmarks with a nonsmooth \(\ell_1\) regularization term, making them suitable test problems for the composite monotone inclusion framework considered in this paper.
			In each experiment, we tested both \(\lambda = \tfrac{1}{m}\) and \(\lambda = \tfrac{100}{m}\).
			Since the optimal solution of this problem is known, we denote the corresponding optimal function value by \(\varphi_{\star}\).
			To evaluate the performance of each algorithm, we plot the optimality gap \(\varphi(\xk*) - \varphi_{\star}\) on the vertical axis in \cref{fig:logreg}.
			
			The experiments confirm the trends observed in the previous problems, both in the ordering of the stepsizes with respect to \(\alpha\) and in the fact that, for a fixed \(\alpha\), the two proposed schemes behave almost identically.
			
			\begin{figure}[!htbp]
				\centering
			 	\includetikz[width=\linewidth]{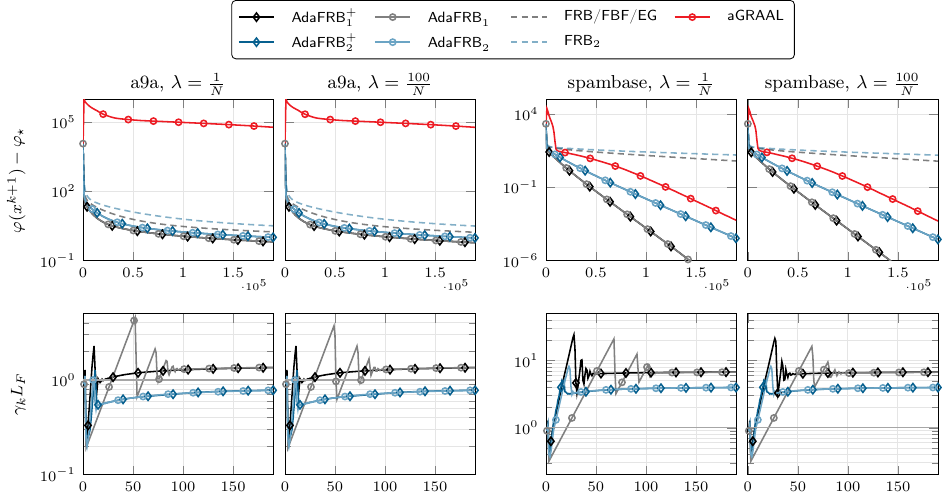}%
				\caption{%
					Performance comparison of the algorithms on the sparse logistic regression problem \eqref{eq:logreg} of \S\ref{sec:logreg};
					overlapping plots of \frb{}, \EG{} and \FBF{} are merged.
					The bottom row reports the stepsizes magnitude over the first \(200\) iterations for the proposed adaptive algorithms, scaled by \(L_F\);
					the dotted gray line marks the baseline \(\gamk L_F=1\).
				}
			    \label{fig:logreg}
			\end{figure}

	{\small
		\appendix
		\section*{Appendix}
		\phantomsection
		\addcontentsline{toc}{section}{Appendix}
		\stepcounter{section}%
		\subsection{Auxiliary lemmas}

			\begin{lemma}[FNE-like lemma]\label{thm:FNE}%
				Consider \(\xk, \xk*\) generated via \eqref{eq:xk*} for some \(\xk',\uk,\uk*\in \R^n\).
				Then, denoting
				\[
					\wk*
				\coloneqq
					-\Dxk-\gamk(\uk-\uk*),
				\]
				and recalling \(\rhok* = \frac{\gamk*}{\gamk}\),
				for any \(k\in\N\) it holds that
				\[
					\norm{\Dxk*}^2
				\leq
					-\rhok*
					\innprod{
						\Dxk*
					}{
						\wk*
					}
				\leq
					\rhok*^2
					\norm{\wk*}^2.
				\]
			\end{lemma}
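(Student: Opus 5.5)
The plan is to use firm nonexpansiveness (monotonicity) of \(\partial g\), applied to the two consecutive prox steps defining \(\xk\) and \(\xk*\). By \eqref{eq:tildeg}, both \(\nabla*g(\xk)=-\tfrac{\Dxk}{\gamk}-\uk\in\partial g(\xk)\) and \(\nabla*g(\xk*)=-\tfrac{\Dxk*}{\gamk*}-\uk*\in\partial g(\xk*)\). Monotonicity of \(\partial g\) then gives
\[
	\innprod{\nabla*g(\xk*)-\nabla*g(\xk)}{\xk*-\xk}\geq0,
\]
that is,
\[
	\innprod{-\tfrac{\Dxk*}{\gamk*}+\tfrac{\Dxk}{\gamk}+\uk-\uk*}{\Dxk*}\geq0.
\]

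Next, multiply this inequality by \(\gamk*=\rhok*\gamk>0\) and rearrange. This yields
\[
	\norm{\Dxk*}^2\leq\rhok*\innprod{\Dxk+\gamk(\uk-\uk*)}{\Dxk*}=-\rhok*\innprod{\Dxk*}{\wk*},
\]
using the definition \(\wk*=-\Dxk-\gamk(\uk-\uk*)\). This is the first inequality.

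For the second inequality, apply Cauchy--Schwarz to the middle term: \(-\rhok*\innprod{\Dxk*}{\wk*}\leq\rhok*\norm{\Dxk*}\norm{\wk*}\). Combined with the first inequality, this gives \(\norm{\Dxk*}^2\leq\rhok*\norm{\Dxk*}\norm{\wk*}\). If \(\Dxk*\neq0\), dividing by \(\norm{\Dxk*}\) yields \(\norm{\Dxk*}\leq\rhok*\norm{\wk*}\), and therefore
\[
	-\rhok*\innprod{\Dxk*}{\wk*}\leq\rhok*\norm{\Dxk*}\norm{\wk*}\leq\rhok*^2\norm{\wk*}^2.
\]
If \(\Dxk*=0\), the middle term is zero and the claim is trivial.

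There is no real obstacle here. The only points needing care are the signs when rearranging, and the \(k=0\) edge case. In that case \(\xk'\) and \(\uk\) are arbitrary given vectors (the statement allows this), so one must make sure that \(\xk\) is itself produced by a prox step from \(\xk'\) with \(\uk\), so that \eqref{eq:tildeg} applies. Otherwise, the relation \(-\tfrac{\Dxk}{\gamk}-\uk\in\partial g(\xk)\) should be taken as the standing hypothesis that the lemma encodes.
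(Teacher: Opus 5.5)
Your proof is correct and is essentially the paper's. The paper applies firm nonexpansiveness of \(\prox_{\gamk* g}\) after rewriting \(\xk=\prox_{\gamk* g}\bigl(\xk+\gamk*\nabla* g(\xk)\bigr)\), which is exactly your monotonicity inequality for \(\partial g\) at the two subgradients from \eqref{eq:tildeg}, and then uses the same Cauchy--Schwarz step. Your caveat about \(k=0\) is also correct: the lemma, like the paper's proof, needs \(\xk\) itself to be a prox output so that \(\nabla* g(\xk)\in\partial g(\xk)\).
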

			\begin{proof}
				Follows from firm nonexpansiveness of \(\prox_{\gamk*g}\), after observing that
				\[
					\xk
				=
					\prox_{\gamk*g}\bigl(\xk+\gamk*\nabla*g(\xk)\bigr)
				=
					\prox_{\gamk*g}\bigl(\xk - \rhok*(\Dxk+\gamk\uk)\bigr),
				\]
				hence that
				\begin{align*}
					\norm{\xk*-\xk}^2
				\leq{} &
					\innprod{
						\xk*-\xk
					}{
						\xk-\gamk*\uk*
						-
						\bigl[\xk-\rhok*(\Dxk+\gamk\uk)\bigr]
					}
				\\
				={} &
					\rhok*
					\innprod{
						\xk*-\xk
					}{
						\Dxk
						+
						\gamk(\uk-\uk*)
					}.
				\end{align*}
				An application of the Cauchy-Schwarz inequality concludes the proof.
			\end{proof}
			
			\begin{lemma}\label{thm:gammin}%
				Suppose that a sequence \(\seq{\gamk}\) satisfies
				\[
					\gamk*
				\geq
					\min\set{\gamk, \gamk\sqrt{\tfrac{1}{\alpha}+\rhok}, s_k}
				\]
				for some \(\alpha\in[1,2]\), where \(\rho_0\geq1\) and \(\rhok*=\frac{\gamk*}{\gamk}\) for \(k\geq0\), and that there exists \(s_{\rm min}>0\) such that \(s_k\geq s_{\rm min}\) for all \(k\).
				Then,
				\[
					\gamk
				\geq
					\tfrac{1}{\sqrt{\alpha}}
					\min\set{\gamma_0,s_{\rm min}}
				\quad
					\forall k\in\N.
				\]
			\end{lemma}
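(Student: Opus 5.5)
The plan is a ``last good index'' argument. Set \(m\coloneqq\min\set{\gamma_0,s_{\rm min}}\) and call an index \(j\) \emph{good} if \(\gamma_j\geq m\); the index \(j=0\) is good. For any \(k\), let \(j\leq k\) be the largest good index. If \(j=k\) there is nothing to prove. Otherwise every index \(i\in\set{j+1,\dots,k}\) satisfies \(\gamma_i<m\leq s_{\rm min}\leq s_{i-1}\). So in the hypothesis \(\gamma_i\geq\min\set{\gamma_{i-1},\gamma_{i-1}\sqrt{\tfrac1\alpha+\rho_{i-1}},s_{i-1}}\) the minimum cannot be attained by the \(s_{i-1}\) term, because that would force \(\gamma_i\geq m\). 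Dividing by \(\gamma_{i-1}\), this gives the ratio recursion
\[
	\rho_i\geq\min\set{1,\sqrt{\tfrac1\alpha+\rho_{i-1}}}
\qquad
	\forall i\in\set{j+1,\dots,k}.
\]

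I then exploit this recursion in two steps. First, for \(i=j+1\), since \(\rho_j>0\) (stepsizes are positive, and \(\rho_0\geq1\) when \(j=0\)), one has \(\sqrt{\tfrac1\alpha+\rho_j}\geq\tfrac1{\sqrt\alpha}\). Together with \(1\geq\tfrac1{\sqrt\alpha}\), this gives \(\rho_{j+1}\geq\tfrac1{\sqrt\alpha}\). Second, I show by induction that \(\rho_i\geq1\) for all \(i\in\set{j+2,\dots,k}\). The key scalar fact is \(\tfrac1\alpha+\tfrac1{\sqrt\alpha}\geq1\) for every \(\alpha\in[1,2]\); the left-hand side is decreasing in \(\alpha\), and at \(\alpha=2\) it equals \(\tfrac12+\tfrac{\sqrt2}{2}>1\). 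Hence if \(\rho_{i-1}\geq\tfrac1{\sqrt\alpha}\), which covers both the base case \(i-1=j+1\) and the inductive case \(\rho_{i-1}\geq1\), then \(\sqrt{\tfrac1\alpha+\rho_{i-1}}\geq1\), and therefore \(\rho_i\geq1\).

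Multiplying the ratios then yields
\[
	\gamk=\gamma_j\,\rho_{j+1}\prod_{i=j+2}^{k}\rho_i\geq\tfrac{\gamma_j}{\sqrt\alpha}\geq\tfrac{m}{\sqrt\alpha},
\]
which is the claim. The only delicate point is the first step after a good index. There the ratio can genuinely drop below \(1\) through the \(\sqrt{\tfrac1\alpha+\rho}\) branch, for example after a previous sharp decrease with \(\rho_j\) tiny. The argument has to ensure that such a drop is one-shot, so that the losses do not compound. This is exactly what the inequality \(\tfrac1\alpha+\tfrac1{\sqrt\alpha}\geq1\) provides, and it is the place where the restriction \(\alpha\leq2\) is used.
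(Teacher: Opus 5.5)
Your proof is correct, but it is organized differently from the paper's. The paper first reduces to $\alpha>1$ and $\gamma_0\geq s_{\rm min}$. It then proves $\gamma_k\geq s_{\rm min}/\sqrt\alpha$ by induction on $k$, splitting on the number $i_{k+1}$ of most recent consecutive indices at which the growth bound $\rho_{k+1-j}\geq\sqrt{\tfrac1\alpha+\rho_{k-j}}$ holds:
\begin{itemize}
\item for $i_{k+1}=0$, one gets $\gamma_{k+1}\geq\min\{\gamma_k,s_k\}$;
\item for $i_{k+1}=1$, the previous step gives $\gamma_k\geq\min\{\gamma_{k-1},s_{\rm min}\}$, so either $\rho_k\geq1$ (whence $\rho_{k+1}\geq1$) or $\gamma_k\geq s_{\rm min}$ (whence $\gamma_{k+1}\geq\gamma_k/\sqrt\alpha$);
\item for $i_{k+1}\geq2$, one has $\rho_{k+1}\geq\sqrt{\tfrac1\alpha+\sqrt{\tfrac1\alpha}}\geq1$.
\end{itemize}

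You instead anchor at the last index $j$ with $\gamma_j\geq m$. Along the subsequent run of indices with $\gamma_i<m$, the $s$-branch of the minimum is inactive, so everything reduces to the scalar recursion $\rho_i\geq\min\{1,\sqrt{\tfrac1\alpha+\rho_{i-1}}\}$. Both arguments hinge on the same inequality $\tfrac1\alpha+\tfrac1{\sqrt\alpha}\geq1$.

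What your version buys:
\begin{itemize}
\item It is direct rather than inductive.
\item It covers $\alpha=1$ without a separate remark.
\item It makes the mechanism explicit: along any run below the threshold, the stepsize drops at most once, at the first index of the run and by a factor no worse than $\tfrac1{\sqrt\alpha}$, and is nondecreasing afterwards. This yields the slightly more informative bound $\gamma_k\geq\gamma_j/\sqrt\alpha$.
\item It is the same ``run below the threshold'' decomposition the paper uses for the companion lemma on the average stepsize, so the two proofs would share a common skeleton.
\end{itemize}
The paper's case analysis, by contrast, stays local: thanks to the inductive hypothesis it never looks more than two steps back.

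The only implicit ingredient is positivity of the stepsizes, needed for $\rho_j>0$ when $j\geq1$. It follows by induction from the hypothesis, since $\gamma_0>0$ and $s_k\geq s_{\rm min}>0$.
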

			\begin{proof}
				We only consider the case \(\alpha>1\), for otherwise the claim is trivial.
				Furthermore, up to replacing \(s_{\rm min}\) with \(\min\set{\gamma_0, s_{\rm min}}\), we may assume without loss of generality that \(\gamma_0\geq s_{\rm min}\).
				Therefore, we proceed by induction on \(k\) to show that
				\(
					\gamk
				\geq
					\tfrac{1}{\sqrt{\alpha}}s_{\rm min}
				\).
			
				The case \(k=0\) holds trivially, and so does \(k=1\) because the initialization \(\rho_0\geq1\).
				Suppose now that the claim holds up to iteration \(k\geq1\).
				To establish the inductive step,
				let \(i_{k+1}\geq 0\) denote the number of consecutive indices for which
				\[
					\rho_{k+1-j}
				\geq
					\sqrt{\tfrac{1}{\alpha}+\rho_{k-j}}
				\]
				holds for all \(j=0,\dots,i_{k+1}-1\);
				in particular,
				\(
					\gamk*
				\geq
					\min\set{\gamk, s_k}
				\)
				holds for all \(j=0,\dots,i_{k+1}-1\).
				\begin{itemize}
				\item
					If \(i_{k+1}=0\), then \(\gamk*\geq\min\set{\gamk,s_{\rm min}}\geq\frac{s_{\rm min}}{\sqrt{\alpha}}\) because of the inductive hypothesis on \(k\).
				\item
					If \(i_{k+1}=1\), then \(\gamk\geq\min\set{\gamk',s_{\rm min}}\);
					if \(\gamk\geq\gamk'\), then \(\rhok\geq1\) and thus \(\rhok*\geq\sqrt{\frac{1}{\alpha}+\rhok}\geq1\), hence \(\gamk*\geq\gamk\geq\frac{s_{\rm min}}{\sqrt{\alpha}}\) by the inductive hypothesis.
					If instead \(\gamk\geq s_{\rm min}\), then
					\(
						\rhok*
					\geq
						\sqrt{\frac{1}{\alpha}+\rhok}
						s_{\rm min}
					\geq
						\frac{s_{\rm min}}{\sqrt{\alpha}}
					\),
					and the induction step is proven in this case as well.
				\item
					If \(i_{k+1}\geq2\), then
					\[
						\rhok*
					\geq
						\sqrt{\tfrac{1}{\alpha}+\rhok}
					\geq
						\sqrt{\tfrac{1}{\alpha}+\sqrt{\tfrac{1}{\alpha}+\rhok'}}
					\geq
						\sqrt{\tfrac{1}{\alpha}+\sqrt{\tfrac{1}{\alpha}}}
					\geq
						1
					\]
					because of the upper bound on \(\alpha\).
					It thus follows that \(\gamk*=\rhok*\gamk\geq\gamk\geq\frac{s_{\rm min}}{\sqrt{\alpha}}\), where the last inequality holds by inductive hypothesis.
				\qedhere
				\end{itemize}
			\end{proof}

			\begin{lemma}\label{thm:gamavg}%
				In the setting of \cref{thm:gammin}, suppose that \(\seq{\gamk}\) satisfies the stronger inequality
				\[
					\gamk*
				\geq
					\min\set{\gamk\sqrt{\tfrac{1}{\alpha}+\rhok},\, \gamk\sqrt{\alpha},\, s_k}.
				\]
				Then,
				\[
				\textstyle
					\tfrac{1}{k+1}\sum_{j=0}^k\gamma_j\geq\min\set{\gamma_0,s_{\rm min}}
				\qquad
					\forall k\in\N.
				\]
			\end{lemma}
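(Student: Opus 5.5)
The plan is to set \(m\coloneqq\min\set{\gamma_0,s_{\rm min}}\) and, as in \cref{thm:gammin}, reduce to the case \(\gamma_0\geq s_{\rm min}\). I would then analyse when a stepsize can drop below \(m\).

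Since \(s_k\geq s_{\rm min}\geq m\), if \(\gamk\geq m/\sqrt{\alpha}\) then the second and third terms in the minimum are both at least \(m\). So \(\gamk*<m\) can only happen when the first term is active, and then
\[
\gamk*^2\geq\tfrac{1}{\alpha}\gamk^2+\gamk\gamk',
\qquad\text{equivalently}\qquad
\rhok*\geq\sqrt{\tfrac{1}{\alpha}+\rhok}.
\]
The idea is to show that every deficit \(m-\gamma_j>0\) is paid for by a surplus \(\gamma_i-m\geq0\) at an earlier index. This yields \(\sum_{j=0}^k(\gamma_j-m)\geq0\), which is the claim.

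Concretely, I would split the index set \(\set{0,\dots,k}\) into maximal blocks. Each block starts with an index \(i\) at which \(\gamma_i\geq m\) (the first block starts at \(i=0\), since \(\gamma_0\geq m\)). It is followed by a maximal run \(\gamma_{i+1},\dots,\gamma_{i+r}<m\). Along the run the \(\sqrt{1/\alpha+\rho}\) branch is active at every step. Exactly as in the case \(i_{k+1}\geq2\) of \cref{thm:gammin}, \(\rho\geq\sqrt{1/\alpha+\sqrt{1/\alpha}}\geq1\) from the second step of the run on (this uses \(\alpha\leq2\)). Hence the run is nondecreasing after its first element, and every term satisfies \(\gamma_{i+j}\geq\gamma_{i+1}\geq\gamma_i/\sqrt{\alpha}\).

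It then suffices to prove, for each block, the local inequality
\[
\textstyle\sum_{j=0}^{r}\gamma_{i+j}\geq(r+1)m .
\]
I would try to prove it by a potential (telescoping) argument in the spirit of the computation of \(D_K\) in \cref{thm:recipe}. Set \(\Phi_j\coloneqq\gamma_j+\alpha\gamma_j\rho_j\). The active-branch inequality \(\alpha\rho_{j+1}^2\geq1+\alpha\rho_j\) gives \(\gamma_{j+1}\geq\gamma_j+\alpha(\gamma_j\rho_j-\gamma_{j+1}\rho_{j+1})\), after multiplying by \(\gamma_j\) and using \(\gamma_{j+1}\rho_{j+1}\leq\ldots\). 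Summing along the run telescopes the \(\alpha\gamma\rho\) terms. This should bound the run sum from below by \(r\gamma_i\) plus a boundary correction, and the correction is controlled by the surplus \(\gamma_i-m\) together with \(\gamma_i\rho_i\). A truncated final block (a run reaching \(k\)) is handled by the same partial inequality.

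The main obstacle is precisely this block inequality. The deficit \(m-\gamma_{i+1}\) of the first low step can be as large as \(m(1-1/\sqrt{\alpha})\) when \(\gamma_i=m\) and \(\rho_i\) is tiny. It must then be recovered by the geometric-type growth \(\rho\geq\sqrt{1/\alpha+\rho}\) along the run, or by the surplus at \(i\).

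If the clean telescoping fails, the first fallback is to exploit the slack of \(\gamk\sqrt\alpha\) over \(\gamk\), i.e.\ the second term of the minimum. That term caps any drop from \(\gamma_i\) only mildly, but it guarantees that the block leader \(\gamma_i\) itself was reached by a step of ratio at least \(\min\set{\sqrt\alpha,\ldots}\) from below \(m\). The argument would then charge deficits of a run to the excess accumulated at the preceding rise above \(m\), instead of to the block leader.
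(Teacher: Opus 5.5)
Your preliminary observations are correct: a drop below \(m\) forces the branch \(\rho_{j+1}\ge\sqrt{1/\alpha+\rho_j}\), and a run is nondecreasing from its second element. The gap is the local inequality \(\sum_{j=0}^{r}\gamma_{i+j}\ge(r+1)m\) for a leader plus its run, to which you reduce everything: it is false, so no argument can prove it.

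Take \(\alpha=2\), \(\gamma_{i-1}=100m\) and \(\gamma_i=m\), which is allowed if \(s_{i-1}=m\); then \(\rho_i=10^{-2}\). Let all later \(s_j\) be large and let the first branch hold with equality. Then \(\gamma_{i+1}\approx0.714m\), \(\gamma_{i+2}\approx0.787m\), \(\gamma_{i+3}\approx0.996m\) and \(\gamma_{i+4}\approx1.32m\). So \(r=3\) and the block sums to about \(3.50m<4m\). There is no surplus at the leader, and the growth along the run is too slow. The deficit is covered only by the leader's predecessor \(\gamma_{i-1}=\gamma_i/\rho_i\), which your accounting excludes.

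The telescoping device is also vacuous. The active-branch inequality \(\alpha\gamma_{j+1}\rho_{j+1}\ge\gamma_j+\alpha\gamma_j\rho_j\) telescopes to the \emph{upper} bound \(\sum_{j=i}^{i+r-1}\gamma_j\le\alpha(\gamma_{i+r}\rho_{i+r}-\gamma_i\rho_i)\). The inequality you write is just \(\Phi_{j+1}\ge\Phi_j\); summing it gives \(\Phi_{i+r}\ge\Phi_i\) and says nothing about \(\sum_j\gamma_j\).

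The missing idea, which the paper's proof uses, is to put \(\gamma_{i-1}\) into the block and exploit a trade-off in \(\rho_i\): a small \(\rho_i\) makes the predecessor large, while a large \(\rho_i\) makes the run grow fast. Let \(\phi_\alpha(t)=\sqrt{1/\alpha+t}\), so that \(\rho_{i+\ell}\ge\phi_\alpha^{\ell}(\rho_i)\) (the \(\ell\)-fold composition), and note that \(\phi_\alpha^{\ell}\ge1\) for \(\ell\ge2\) and \(\phi_\alpha\ge\phi_2\). Then
\[
\gamma_{i-1}+\gamma_i+\sum_{j=1}^{r}\gamma_{i+j}
=\gamma_i\Bigl(\tfrac{1}{\rho_i}+1+\sum_{j=1}^{r}\prod_{\ell=1}^{j}\rho_{i+\ell}\Bigr)
\ge m\inf_{\rho>0}\Bigl(\tfrac{1}{\rho}+1+r\,\phi_2(\rho)\Bigr)
\ge(r+2)m
\]
for \(1\le r\le3\), and strong induction on the prefix sums then closes the argument. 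Runs with \(r\ge4\) cannot occur: four first-branch steps from a leader \(\ge m\) multiply it by at least \(\prod_{\ell=1}^{4}\phi_2^{\ell}(0)\approx1.30>1\).

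Your disjoint-block bookkeeping survives this fix. A nonempty run forces \(\phi_\alpha(\rho_i)<1\), i.e.\ \(\rho_i<1-\tfrac{1}{\alpha}\), so \(\gamma_{i-1}>\gamma_i\ge m\) is a singleton block and can be merged with the next block. No run can start at index \(1\), because \(\rho_0\ge1\).

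Your fallback of charging deficits to ``the preceding rise above \(m\)'' looks in the wrong place. In the bad case the leader is reached from far above \(m\), not from below.
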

			\begin{proof}
				Similarly as in \cref{thm:gammin}, we may assume that \(\gamma_0\geq s_{\rm min}\) without loss of generality.
				Thus, we shall proceed by induction on \(k\) to show that
				\(
					\Sigma_k
				\coloneqq
					\sum_{j=0}^k\gamma_j\geq (k+1)s_{\rm min}
				\).
			
				The cases \(k=0\) and \(k=1\) hold by initialization.
				Suppose that the claim holds up to iteration \(k\geq1\),
				and let \(i=i_{k+1}\geq 0\) denote the maximum number of consecutive indices for which
				\begin{equation}\label{eq:ik+1:def}
					\gamma_{k+1-j}
				<
					s_{\rm min}
				\quad
					\text{holds for all \(j=0,\dots,i-1\).}
				\end{equation}
				In particular,
				\begin{equation}\label{eq:ik+1:rhogeq}
					\rho_{k+1-j}
				\geq
					\min\set{
						\sqrt{\tfrac{1}{\alpha}+\rho_{k-j}}
					,\,
						\sqrt{\alpha}
					}
				\quad
					\text{holds for all \(j=0,\dots,i-1\).}
				\end{equation}
				Clearly, if \(i=0\) then \(\gamk*\geq s_{\rm min}\) and the induction step is straightforward.
				Otherwise, considering the last \(i\geq 1\) iterates, if there exists \(0\leq \hat\jmath\leq i-1\) such that \(\rho_{k+1-\hat\jmath}\geq\sqrt{\alpha}\), then  \(\gamma_{k+1-\hat\jmath}\geq\sqrt{\alpha}\gamma_{k-\hat\jmath}\geq s_{\rm min}\), where the last inequality follows from the global lower bound on the stepsizes given in \cref{thm:gammin}.
				Moreover, it also follows from \eqref{eq:ik+1:rhogeq} that \(\rho_{k+1-\hat\jmath},\dots,\rhok*\geq 1\), hence that \(\gamk*=\gamma_{k+1-\hat\jmath}\rho_{k+2-\hat\jmath}\cdots\rhok*\geq\gamma_{k+1-\hat\jmath}\geq s_{\rm min}\); the inductive step is again obvious.
			
				It remains to consider the case in which \(\rho_{k+1-j}<\sqrt{\alpha}\) holds for all \(j=0,\dots,i-1\), and in particular
				\begin{equation}\label{eq:ik+1:rhogeq'}
					\rho_{k+1-j}
				\geq
					\sqrt{\tfrac{1}{\alpha}+\rho_{k-j}}
				\quad
					\text{holds for all \(j=0,\dots,i-1\).}
				\end{equation}
				We consider two subcases:
				\begin{itemize}

				\item
					If \(i\leq 3\), then we shall show that the sum of the last \(i+2\) terms is larger than \((i+2)s_{\rm min}\);
					the induction step will then immediately follow.
					To this end, observe that, by maximality of \(i\) in \eqref{eq:ik+1:def}, \(\gamma_{k-i+1}\geq s_{\rm min}\), and thus
					\begin{align*}
						\sum_{j=k-i}^{k+1}
						\gamma_j
					={} &
						\gamma_{k-i+1}
						\Bigl(
							\tfrac{1}{\rho_{k-i+1}}
							+
							1
							+
							\sum_{j=2}^{i+1}
							\prod_{\ell=2}^j
							\rho_{k-i+\ell}
						\Bigr)
					\geq
						s_{\rm min}
						\Bigl(
							\tfrac{1}{\rho_{k-i+1}}
							+
							1
							+
							\sum_{j=2}^{i+1}
							\prod_{\ell=2}^j
							\rho_{k-i+\ell}
						\Bigr).
					\end{align*}
					Let \(\phi_\alpha(t)\coloneqq\sqrt{\frac{1}{\alpha}+t}\), so that \eqref{eq:ik+1:rhogeq'} can be equivalently cast as \(\rho_j\geq\phi_\alpha(\rho_{j-1})\) for \(j=k-i+2,\dots,k+1\).%
			
					\noindent
					\begin{minipage}[t]{0.63\linewidth}%
						\vspace*{-.8\baselineskip}%
						Notice further that \(\phi_\alpha^2(t)\geq\phi_\alpha^2(0)=\sqrt{\frac{1}{\alpha}+\sqrt{\frac{1}{\alpha}}}\geq1\) and \(t\leq\phi_\alpha(t)\leq\phi_\alpha^2(t)\leq\dots\leq\phi_\alpha^\ell(t)\) for any \(t\leq\rhomax\), while \(\phi_\alpha^\ell(t)\geq\rhomax\geq1\) for any \(t\geq\rhomax\), where \(\phi_\alpha^\ell\) denotes \(\phi_\alpha\) composed with itself \(\ell\) times and \(\rhomax=\frac{1}{2}\bigl(1+\sqrt{1+\frac{4}{\alpha}}\bigr)\) is the positive fixed point of \(\phi_\alpha\).
						Hence, \(\phi_\alpha^\ell(t)\geq 1\) for any \(\ell\geq2\) and \(t>0\), and the above inequality can be worked into
					\end{minipage}
					\hfill
					\fbox{%
						\begin{minipage}[t]{0.32\linewidth}%
							\vspace*{0pt}%
							\includetikz[width=\linewidth]{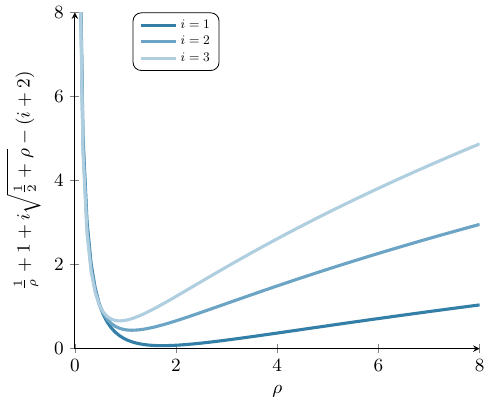}%
						\end{minipage}%
					}%
			
					\noindent
					\vspace*{-3.5\baselineskip}%
					\begin{align*}
						\sum_{j=k-i}^{k+1}
						\gamma_j
					\geq{} &
						s_{\rm min}
						\Bigl(
							\tfrac{1}{\rho_{k-i+1}}
							+
							1
							+
							\sum_{j=2}^{i+1}
							\prod_{\ell=2}^j
							\phi_\alpha^{\ell-1}(\rho_{k-i+1})
						\Bigr)
					\\
					\geq{} &
						s_{\rm min}
						\Bigl(
							\tfrac{1}{\rho_{k-i+1}}
							+
							1
							+
							\phi_\alpha(\rho_{k-i+1})
							+
							\sum_{j=3}^{i+1}
							\phi_\alpha(\rho_{k-i+1})
							\prod_{\ell=3}^j
							\overbracket*[0.5pt]{
								\phi_\alpha^2(\rho_{k-i+1})
							}^{\geq1}
						\Bigr)
					\\
					\geq{} &
						s_{\rm min}
						\Bigl(
							\tfrac{1}{\rho_{k-i+1}}
							+
							1
							+
							i
							\phi_\alpha(\rho_{k-i+1})
						\Bigr)
					\geq
						s_{\rm min}
						\inf_{\rho>0}
						\Bigl(
							\tfrac{1}{\rho}
							+
							1
							+
							i
							\phi_2(\rho)
						\Bigr)
					\geq
						(i+2)s_{\rm min},
					\end{align*}
					where the second-last inequality uses the fact that \(\phi_\alpha\geq\phi_2\) for any \(\alpha\in[1,2]\), and the last one can be easily verified graphically for any \(i\leq 3\) (see figure; in fact, this is true for any \(i\leq 22\)).
			
				\item
					If \(i\geq 4\), then
					\(
						\rhok''
						\rhok'
						\rhok
						\rhok*
					\geq
						\prod_{\ell=1}^4
						\phi_\alpha^\ell(\rho_{k-3})
					\geq
						\prod_{\ell=1}^4
						\phi_2^\ell(0)
					=
						1.76425\ldots
					\geq
						\sqrt{2}
					\geq
						\sqrt{\alpha}
					\)
					for any \(\alpha\in[1,2]\).
					Combined with the fact that \(\gamma_{k-3}\geq\frac{s_{\rm min}}{\sqrt{\alpha}}\) by \cref{thm:gammin}, we conclude that
					\(
						\gamk*
					=
						\gamma_{k-3}
						\rhok''
						\rhok'
						\rhok
						\rhok*
					\geq
						s_{\rm min}
					\),
					and the sought inductive step thus follows.
				\qedhere
				\end{itemize}
			\end{proof}
			
		\subsection{Omitted proofs}

			\begin{appendixproof}{thm:main:innprods}
				For notational conciseness, in what follows for any \(x\in\R^n\) and \(k\in\N\) we denote
				\(
					\Dg(x,\xk)
				\coloneqq
					g(x)-g(\xk)
					-
					\innprod{\nabla*g(\xk)}{x-\xk}
				\geq
					0
				\).
				We have
				\begin{align*}
					0
				\leq{} &
					\Dg(x^\star,\xk*)+\Dg(\xk*,\xk)
				\\
				={} &
					g(x^\star)-g(\xk)
					-
					\innprod{\nabla*g(\xk*)}{x^\star-\xk*}
					-
					\innprod{\nabla*g(\xk)}{\xk*-\xk}
				\\
				={} &
					\innprod{F(\xk)}{\xk-x^\star}
					-
					\Vk
					-
					\tfrac{1}{\gamk*}
					\innprod{\xk-\xk*-\gamk*\uk*}{x^\star-\xk*}
				\\
				&
					-
					\tfrac{1}{\gamk}
					\innprod{\wk*-\gamk\uk*}{\xk*-\xk}
				\\
				={} &
					\innprod{\uk*-F(\xk)}{x^\star-\xk}
					-
					\Vk
					-
					\tfrac{1}{\gamk*}
					\innprod{\xk-\xk*}{x^\star-\xk*}
				\\
				&
					-
					\tfrac{1}{\gamk}
					\innprod{\wk*}{\xk*-\xk}
				\\
				={} &
					\innprod{\uk*-F(\xk)}{x^\star-\xk}
					-
					\Vk
					-
					\tfrac{1}{\gamk}
					\innprod{\wk*}{\xk*-\xk}
				\\
				&
					-
					\tfrac{1}{2\gamk*}
					\norm{\xk-\xk*}^2
					-
					\tfrac{1}{2\gamk*}
					\norm{\xk*-x^\star}^2
					+
					\tfrac{1}{2\gamk*}
					\norm{\xk-x^\star}^2,
				\end{align*}
				where \(\wk*\) is as in \cref{thm:FNE} and for brevity we have omitted the dependency on \(x^\star\) for \(\Vk(x^\star)\).
				We now add the subgradient inequality
				\begin{align*}
					0
				\leq{} &
					\Dg(\xk',\xk)
				\\
				={} &
					g(\xk')
					-
					g(\xk)
					-
					\innprod{\nabla*g(\xk)}{\xk'-\xk}
				\\
				={} &
					\Vk'-\Vk
					+
					\innprod{F(\xk')}{x^\star-\xk'}
					-
					\innprod{F(\xk)}{x^\star-\xk}
					+
					\innprod{\uk}{\xk'-\xk}
					-
					\tfrac{1}{\gamk}
					\norm{\xk-\xk'}^2
				\\
				={} &
					\Vk'-\Vk
					+
					\innprod{F(\xk')-\uk}{\xk-\xk'}
					+
					\innprod{F(\xk')-F(\xk)}{x^\star-\xk}
					-
					\tfrac{1}{\gamk}
					\norm{\xk-\xk'}^2
				\end{align*}
				scaled by \(\thetk*\), and multiply everything by \(\gamk*\) to arrive at
				\begin{align*}
					{\rm LHS}
				\coloneqq{} &
					\tfrac{1}{2}
					\norm{\xk*-x^\star}^2
					+
					\tfrac{1}{2}
					\norm{\xk-\xk*}^2
					+
					\gamk*(1+\thetk*)\Vk
				\\
				\leq{} &
					\tfrac{1}{2}
					\norm{\xk-x^\star}^2
					+
					\thetk*\gamk*\Vk'
					-
					\thetk*\rhok*
					\norm{\xk-\xk'}^2
				\\
				&
					+
					\gamk*\innprod{\uk*-\bigl[F(\xk)+\thetk*(F(\xk)-F(\xk'))\bigr]}{x^\star-\xk}
				\\
				&
					+
					\rhok*\innprod{\wk*}{\xk-\xk*}
					+
					\thetk*\gamk*
					\innprod{F(\xk')-\uk}{\xk-\xk'}.
				\end{align*}
				Because of the choice of \(\uk*\), the first inner product is null.
				Further observing that \(F(\xk')-\uk=\thetk(F(\xk'')-F(\xk'))\) leads to
				\begin{align*}
					{\rm LHS}
				\leq{} &
					\tfrac{1}{2}
					\norm{\xk-x^\star}^2
					-
					\thetk*\rhok*
					\norm{\xk-\xk'}^2
					+
					\thetk*\gamk*\Vk'(x^\star)
				\\
				&
					+
					\thetk\thetk*\gamk*
					\innprod{F(\xk'')-F(\xk')}{\xk-\xk'}
					+
					\rhok*\innprod{\wk*}{\xk-\xk*}
				\\
				\leq{} &
					\tfrac{1}{2}
					\norm{\xk-x^\star}^2
					-
					\thetk*\rhok*
					\norm{\xk-\xk'}^2
					+
					\thetk*\gamk*\Vk'(x^\star)
				\\
				&
					+
					\thetk\thetk*\gamk\rhok*
					\innprod{\xk'-\xk}{F(\xk')-F(\xk'')}
					+
					\rhok*^2
					\norm{\wk*}^2,
				\numberthis\label{eq:main_ineq}
				\end{align*}
				where in the last inequality we used \cref{thm:FNE} to bound the second inner product.
			
				Recall that
				\(
					\wk*
				=
					\xk'-\xk-\gamk(\uk-\uk*)
				\),
				and observe that
				\[
					\uk-\uk*
				=
					(1+\thetk*)(F(\xk')-F(\xk))
					+
					\thetk(F(\xk')-F(\xk'')).
				\]
				Then, we can expand
				\begin{align*}
					\norm{\wk*}^2
				={} &
					\norm{\xk'-\xk-(1+\thetk*)\gamk(F(\xk')-F(\xk))}^2
					+
					\thetk^2\gamk^2
					\norm{F(\xk')-F(\xk'')}^2
				\\
				&
					-
					2\thetk\gamk
					\innprod{\xk'-\xk-(1+\thetk*)\gamk(F(\xk')-F(\xk))}{F(\xk')-F(\xk'')}
				\\
				={} &
					\bigl[
						(1+\thetk*)^2\gamk^2\Lk^2-2(1+\thetk*)\gamk\lk+1
					\bigr]
					\norm{\xk-\xk'}^2
				\\
				&
					+
					\thetk^2\gamk^2\Lk'^2
					\norm{\xk'-\xk''}^2
					-
					2\thetk\gamk
					\innprod{\xk'-\xk}{F(\xk')-F(\xk'')}
				\\
				&
					+
					2\thetk(1+\thetk*)\gamk^2
					\innprod{F(\xk')-F(\xk)}{F(\xk')-F(\xk'')},
				\end{align*}
				which plugged into \eqref{eq:main_ineq} yields the claimed inequality.
			\end{appendixproof}

			\begin{lemma}\label{thm:descent-star}%
				For every \(k\in\N\) and every \(x\in\dom g\),
				\[
					\gamk\Vk(x)
				\leq
					\tfrac{1}{2}
					\norm{\xk'-x}^2
					-
					\tfrac{1}{2}
					\norm{\xk-x}^2
					-
					\tfrac{1}{2}
					\norm{\xk'-\xk}^2
					+
					\gamk
					\innprod{\uk-F(\xk)}{x-\xk}.
				\]
			\end{lemma}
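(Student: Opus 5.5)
This is a direct consequence of the proximal characterization \eqref{eq:tildeg} together with the convexity of \(g\); no stepsize condition is needed. The plan is to write the subgradient inequality of \(g\) at \(\xk\) with the particular subgradient \(\nabla*g(\xk)=-\frac{\Dxk}{\gamk}-\uk\). Then we add and subtract the appropriate \(F\)-terms so that \(\Vk(x)\) appears, and finally expand the remaining cross term with the three-point identity.

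\textbf{Step 1.} For any \(x\in\dom g\), convexity gives
\[
	g(x)\geq g(\xk)+\innprod{\nabla*g(\xk)}{x-\xk}
	=g(\xk)-\tfrac{1}{\gamk}\innprod{\xk-\xk'}{x-\xk}-\innprod{\uk}{x-\xk}.
\]
Rearranging,
\[
	g(\xk)-g(x)\leq\tfrac{1}{\gamk}\innprod{\xk-\xk'}{x-\xk}+\innprod{\uk}{x-\xk}.
\]

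\textbf{Step 2.} By definition \eqref{eq:Vk}, \(\Vk(x)=g(\xk)-g(x)-\innprod{F(\xk)}{x-\xk}\). Subtracting \(\innprod{F(\xk)}{x-\xk}\) from both sides of the bound in Step 1 and multiplying by \(\gamk>0\) yields
\[
	\gamk\Vk(x)\leq\innprod{\xk-\xk'}{x-\xk}+\gamk\innprod{\uk-F(\xk)}{x-\xk}.
\]

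\textbf{Step 3.} Apply the elementary identity \(2\innprod{a-b}{c-a}=\norm{b-c}^2-\norm{a-c}^2-\norm{a-b}^2\) with \(a=\xk\), \(b=\xk'\), \(c=x\). This turns the first inner product into
\[
	\tfrac12\norm{\xk'-x}^2-\tfrac12\norm{\xk-x}^2-\tfrac12\norm{\xk'-\xk}^2,
\]
which is exactly the claimed inequality.

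The only point needing care is the index range. Relation \eqref{eq:tildeg} requires \(\xk\) to be produced by a prox step, so \(k\geq1\) (with \(\uk\) as defined by the iteration). For \(k=0\) the statement would have to be read with the initialization convention \(x^{-1}=x^0\), or simply excluded; it is only used for large \(k\) in \cref{thm:limit}. Beyond this bookkeeping and checking the sign in the three-point identity, I expect no real obstacle.
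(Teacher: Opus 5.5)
Your proof is correct and follows the paper's own argument: it uses the subgradient inequality at \(x^k\) with the particular subgradient supplied by \eqref{eq:tildeg}, adds and subtracts \(\langle F(x^k),x-x^k\rangle\) to produce \(V_k(x)\), and finishes with the three-point identity, with all signs checking out. Your caveat that the statement really requires \(k\geq1\), so that \(x^k\) is a prox output and \(u^k\) is defined, is a fair point of bookkeeping that the paper leaves implicit.
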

			\begin{proof}
				By the (starred) subgradient characterization \eqref{eq:tildeg}, \(\nabla* g(\xk)=\tfrac{1}{\gamk}(\xk'-\xk)-\uk\in\partial g(\xk)\), so, with \(\Dg(\cdot,\cdot)\) denoting the (nonnegative) subgradient-inequality gap as in the proof of \cref{thm:main:innprods},
				\[
					0
				\leq
					\Dg(x,\xk)
				=
					g(x)-g(\xk)
					-
					\tfrac{1}{\gamk}
					\innprod{\xk'-\xk}{x-\xk}
					+
					\innprod{\uk}{x-\xk}.
				\]
				Adding and subtracting \(\innprod{F(\xk)}{x-\xk}\) and recalling \(\Vk(x)=g(\xk)-g(x)-\innprod{F(\xk)}{x-\xk}\) gives
				\[
					\innprod{\xk'-\xk}{x-\xk}
				\leq
					-\gamk\Vk(x)
					+
					\gamk\innprod{\uk-F(\xk)}{x-\xk}.
				\]
				The claim follows from the identity \(2\innprod{\xk'-\xk}{x-\xk}=\norm{\xk'-\xk}^2+\norm{x-\xk}^2-\norm{\xk'-x}^2\).
			\end{proof}

			\begin{appendixproof}{ex:sharpness}
				Since \(S\begin{pmatrix}1\\i\end{pmatrix}=i\begin{pmatrix}1\\i\end{pmatrix}\), writing \(\xk=z_k\begin{pmatrix}1\\i\end{pmatrix}+\overline{z_k\begin{pmatrix}1\\i\end{pmatrix}}\) for a scalar \(z_k\in\mathbb{C}\) casts the (real) iteration as the scalar recursion \(z_{k+1}=(1-(1+\alpha)i\gamma)z_k+\alpha i\gamma z_{k-1}\), that is \((z_{k+1},z_k)=M(\alpha,\gamma)(z_k,z_{k-1})\) with
				\[
					M(\alpha,\gamma)
				\coloneqq
					\begin{pmatrix}
						1-(1+\alpha)i\gamma & \alpha i\gamma
					\\
						1 & 0
					\end{pmatrix}.
				\]
				Write \(p(z)\coloneqq\det(M(\alpha,\gamma)-zI)=z^2+a_1z+a_0\), where
				\[
					a_1
				=
					(1+\alpha)i\gamma-1,
				\qquad
					a_0
				=
					-\alpha i\gamma.
				\]
				Both roots of a monic complex quadratic \(z^2+a_1z+a_0\) lie in \(\set{\abs{z}<1}\) if and only if \(\abs{a_0}<1\) and \(\abs{a_1-\bar a_1a_0}<1-\abs{a_0}^2\), the Schur--Cohn conditions.
				Since \(a_0\) is purely imaginary, \(\abs{a_0}^2=\alpha^2\gamma^2\).
				A direct computation gives
				\[
					\bigl(1-\abs{a_0}^2\bigr)^2
					-
					\bigl|a_1-\bar a_1a_0\bigr|^2
				=
					\gamma^2
					\Bigl[
						(2\alpha-1)
						-
						\gamma^2\alpha^2(1+2\alpha)
					\Bigr],
				\]
				which for \(\gamma\neq0\) is positive exactly when \(\alpha>\tfrac12\) and \(\gamma^2<\dfrac{2\alpha-1}{\alpha^2(1+2\alpha)}\).
				Since \(\tfrac{2\alpha-1}{2\alpha+1}\leq1\) for every \(\alpha>0\), this bound is at least as restrictive as \(\abs{a_0}<1\), so the two Schur--Cohn conditions reduce to the single stated inequality.
			\end{appendixproof}

	}%

	\phantomsection
	\addcontentsline{toc}{section}{References}
	\bibliographystyle{plain}
	\bibliography{Bibliography.bib}

\end{document}